\newtheoremstyle{mythm}{3pt}{3pt}{}{}{\bfseries}{}{5mm}{}
\def\iint{\displaystyle\!iint}
\def\int{\displaystyle\!int}
\def\lim{\displaystyle\!lim}
\def\sum{\displaystyle\!sum}
\def\sup{\displaystyle\!sup}
\def\inf{\displaystyle\!inf}
\def\cap{\displaystyle\!cap}
\def\max{\displaystyle\!max}
\newtheorem{thm}{Theorem}[section]
\newtheorem{lem}[thm]{Lemma}
\newtheorem{prop}[thm]{Proposition}
\newtheorem{corr}[thm]{Corollary}
\newtheorem{con1}[thm]{Conjecture}
\newtheorem{con}[thm]{Conjecture}
\theoremstyle{remark}\newtheorem{rem}{Remark}[section]
\newtheorem*{ack}{Acknowledgment}
\numberwithin{equation}{section}
\begin{document}

\title{The Gap of the Consecutive Eigenvalues of the  \\ Drifting Laplacian on  Metric Measure Spaces}
\author{Lingzhong Zeng
\\ \small   College of Mathematics and Informational Science,
Jiangxi Normal University,
\\
\small Nanchang 330022, China, E-Mail: lingzhongzeng@yeah.net
}
\date{}

\maketitle

\begin{abstract}\noindent In this paper, we investigate eigenvalues of the Dirichlet problem and the closed eigenvalue problem of drifting Laplacian
on  the complete metric measure spaces and establish the corresponding general formulas. By using those general formulas, we give
some upper bounds of consecutive gap of the eigenvalues of the eigenvalue problems, which is sharp in the sense of the order of the eigenvalues. As some interesting applications, we study the eigenvalue of
drifting Laplacian on Ricci solitons, self-shrinkers and product Riemannian manifolds. We give the explicit upper bounds of the gap of the consecutive eigenvalues of the drifting Laplacian.
Since eigenvalues is  invariant in the sense of isometry, by the classifications of Ricci solitons and self-shrinkers,
we give the explicit upper bounds for the consecutive eigenvalues of the
drifting Laplacian on a large class metric measure spaces. In addition, we also consider the case of product Riemannian manifolds with certain curvature conditions and some upper bounds are
obtained.
Basing on the case of Laplace operator, we also present a conjecture as follows: all  of the eigenvalues  of
the Dirichlet problem of drifting Laplacian on metric measure spaces satisfy:
$$\lambda_{k+1}-\lambda_{k}\leq(\lambda_{2}-\lambda_{1})k^{\frac{1}{n}}.$$We note the conjecture is true in some special cases.
\vskip3mm
\noindent {\it\bfseries Keywords}:
drifting Laplacian; metric measure space;
consecutive eigenvalues.

\vskip3mm
\noindent 2000 MSC 35P15, 53C40.
\end{abstract}

\section{Introduction}
Let $M^{n}$ be an $n$-dimensional complete Riemannian manifold with smooth metric $g$,
 and $\Omega$ is a bounded domain with piecewise smooth boundary $\partial\Omega$. We consider the following Dirichlet problem:

\begin{equation}
\left\{ \begin{aligned} \label{Eigen-Prob-Lapl} \Delta u=-\lambda u,\ \ &{\rm
in}\ \ \ \ \Omega,
         \\
u=0,\ \ &{\rm on}\ \ \partial\Omega,
                          \end{aligned} \right.
                          \end{equation}where $$\Delta=\frac{1}{\sqrt{det(g)}}\sum_{i,j=1}\partial_{i}g^{ji}\sqrt{det(g)}\partial_{j}.$$
If $M^{n}$ is an $n$-dimensional Euclidean space $\mathbb{R}^{n}$, L. E. Payne,
G. P\'{o}lya and H. F. Weinberger \cite{PPW1} and \cite{PPW2} investigated the eigenvalue inequalities of the Dirichlet problem \eqref{Eigen-Prob-Lapl}.
They established the following universal inequality:
\begin{equation}\label{ppw-ineq}\lambda_{k+1}-\lambda_{k}\leq\frac{4}{nk}\sum^{k}_{i=1}\lambda_{i}.\end{equation}
In various  backgrounds, many mathematicians extended Payne,
P\'{o}lya and Weinberger's universal inequality. However, among a large amount of literatures, there are two main contributions due to G. N. Hile and M. H. Protter \cite{HP} and
H.-C. Yang \cite{Y}. In 1980, G. N. Hile and M. H.  Protter proved the following universal inequality of eigenvalues:

\begin{equation}\label{hp-ineq}\sum^{k}_{i=1}\frac{\lambda_{i}}{\lambda_{k+1}-\lambda_{i}}\geq\frac{nk}{4}.\end{equation}
After a direct calculation, one can show that inequality \eqref{hp-ineq} implies inequality \eqref{ppw-ineq}.
In 1991, H.-C. Yang  proved a very sharp universal inequality in his famous paper \cite{Y} (cf. \cite{CY2}):

\begin{equation}\label{y1-ineq}\sum^{k}_{i=1}(\lambda_{k+1}-\lambda_{i})^{2}\leq\frac{4}{n}\sum^{k}_{i=1}(\lambda_{k+1}-\lambda_{i})\lambda_{i},\end{equation}
which is called H.-C. Yang's first inequality  by M. S. Ashbaugh (cf. \cite{A1}, \cite{A2}).
From \eqref{y1-ineq}, one can infer that
\begin{equation}\label{y2-ineq}\lambda_{k+1}\leq\frac{1}{k}(1+\frac{4}{n})\sum^{k}_{i=1}\lambda_{i},\end{equation}
which is called  H.-C. Yang's   second inequality (cf. \cite{A1}, \cite{A2}). In 2007, Q.-M. Cheng and H.-C. Yang established a celebrated recursion formula \cite{CY2}.
By utilizing this recursion formula,  they gave an explicit upper bound:

\begin{equation}
\begin{aligned}
\label{Cheng-Yang-ineq}\lambda_{k+1}
\leq C_{0}(n,k)k^{\frac{2}{n}}\lambda_{1},
\end{aligned}
\end{equation}
where the constant $C_{0}(n,k)\leq1+\frac{4}{n}$ only depend on $n$ and $k$ (see Q.-M. Cheng and H.-C. Yang's paper \cite{CY2}).
Let $\Omega$ be a bounded domain on an $n$-dimensional Euclidean space $\mathbb{R}^{n}$ or hyperbolic space. For this assumption, in 2016, D. Chen, T. Zheng and H.-C. Yang \cite{CZY}
obtained an upper for the gap of consecutive eigenvalues of the eigenvalue problem \eqref{Eigen-Prob-Lapl} as follows:

\begin{equation}\label{czy-1}\begin{aligned}\lambda_{k+1}-\lambda_{k}\leq
C_{n,\Omega}k^{\frac{1}{n}},\end{aligned}
\end{equation}
where $$C_{n,\Omega}=4\lambda_{1}\sqrt{\frac{C_{0}(n)}{n}},$$ and the constant $C_{0}(n)$ is the same as the one in \eqref{Cheng-Yang-ineq}.
It is well known that the order of the upper
bound of the gap of the consecutive eigenvalues of $\mathbb{S}^{n}$ with
standard metric is $k^{\frac{1}{n}}$. Therefore, for general Riemannian manifolds, D. Chen, T. Zheng and H.-C. Yang  proposed the following conjecture in the same paper \cite{CZY}:

\begin{con1}
\label{conj1}Let $(M^{n},g,f)$ be a complete smooth measure space and $\lambda_{i}$ be the $i$-th $(i=1,2,\cdots,k)$
eigenvalue of the eigenvalue problem \eqref{Eigen-Prob-Lapl}. Then we have

\begin{equation*}\begin{aligned}\lambda_{k+1}-\lambda_{k}\leq
C_{n,\Omega}k^{\frac{1}{n}},\end{aligned}
\end{equation*}
where $$C_{n,\Omega}=4(\lambda_{1}+c_{1})\sqrt{\frac{C_{0}(n)}{n}}.$$

\end{con1}
Furthermore, by constructing a new trial function, the author recently made an affirmative answer to  this conjecture in \cite{Z2}.

\vskip 3mm

Let $M^{n}$ be an $n$-dimensional closed Riemannian manifold. We consider the closed eigenvalue problem of Laplacian:

\begin{equation}
 \label{Eigen-Prob-closed}\Delta u=-\overline{\lambda} u,\ \ {\rm
in}\ \ \ \ M^{n}.\end{equation}
It is well known that the eigenvalues of the closed eigenvalue problem  \eqref{Eigen-Prob-closed}  is
discrete and satisfies the following:

\begin{equation*}
0=\overline{\lambda}_{0}<\overline{\lambda}_{1}\leq\overline{\lambda}_{2}\leq\overline{\lambda}_{3}\leq\cdots\leq\overline{\lambda}_{k}\leq\cdots\rightarrow+\infty,
\end{equation*}
where $\overline{\lambda}_{k}$ is the $k$-th eigenvalue of the closed eigenvalue problem \eqref{Eigen-Prob-closed} and each eigenvalue is repeated according to its multiplicity.
We assume that $M^{n}$ is an $n$-dimensional compact homogeneous Riemannian manifold. In 1980, P. Li\cite{L} investigated the closed eigenvalue problem \eqref{Eigen-Prob-closed} and  proved
the following universal inequality:
\begin{equation*}\begin{aligned}\overline{\lambda}_{k+1}-\overline{\lambda}_{k}\leq
\frac{2}{k+1}\left(\sqrt{\left(\sum^{k}_{i=1}\overline{\lambda}_{i}\right)^{2}+(k+1)\sum^{k}_{i=1}\overline{\lambda}_{i}\overline{\lambda}_{1}}
+\sum^{k}_{i=1}\overline{\lambda}_{i}\right)+\overline{\lambda}_{1}.\end{aligned}
\end{equation*}
 If $M^{n}$ is
an $n$-dimensional compact minimal submanifold in a unit sphere $\mathbb{S}^{N}(1)$, then, in 1980,  P. C.Yang and S. T. Yau \cite{YY} proved the eigenvalues of the closed eigenvalue problem satisfy the following eigenvalue inequality:
\begin{equation*}\begin{aligned}\overline{\lambda}_{k+1}-\overline{\lambda}_{k}\leq n+
\frac{2}{n(k+1)}\left(\sqrt{\left(\sum^{k}_{i=1}\overline{\lambda}_{i}\right)^{2}+n^{2}(k+1)
\sum^{k}_{i=1}\overline{\lambda}_{i}\overline{\lambda}_{1}}+\sum^{k}_{i=1}\overline{\lambda}_{i}\right).\end{aligned}\end{equation*}
Furthermore, E. M. Harrel II and P. L. Michel and J. Stubbe (see (\cite{HM} 1994) and (\cite{H3}1997 )) obtained an abstract inequality of algebraic version.
By applying the algebraic inequality, they proved that,
if $M^{n}$ is an $n$-dimensional compact minimal submanifold in a unit
sphere $\mathbb{S}^{N}(1)$, one  has the following eigenvalue inequality:

\begin{equation}\begin{aligned}\label{hm-ineq-1}\overline{\lambda}_{k+1}-\overline{\lambda}_{k}\leq n+
\frac{4}{n(k+1)}\sum^{k}_{i=1}\overline{\lambda}_{i},\end{aligned}
\end{equation}
and if $M^{n}$ is an $n$-dimensional compact homogeneous Riemannian manifold, then we have

\begin{equation}\begin{aligned}\label{hm-ineq-2}\overline{\lambda}_{k+1}-\overline{\lambda}_{k}\leq
\frac{4}{k+1}\sum^{k}_{i=1}\overline{\lambda}_{i}+\overline{\lambda}_{1},\end{aligned}
\end{equation} One can easily to see that the above inequalities \eqref{hm-ineq-1} and \eqref{hm-ineq-2} made significant improvement to earlier
estimates of differences of consecutive eigenvalues of Laplacian introduced by
P. C. Yang and S. T. Yau  \cite{YY}, P.-F. Leung  \cite{Le}, P. Li \cite{L} and E. M. Harrel II \cite{H1}.
Q.-M. Cheng and H.-C. Yang also considered the same eigenvalue problem and proved that, when $M^{n}$
is an $n$-dimensional compact homogeneous Riemannian manifold  without  boundary, then the eigenvalues of the
close eigenvalue problem \eqref{Eigen-Prob-closed} satisfy

\begin{equation*}\begin{aligned}\overline{\lambda}_{k+1}-\overline{\lambda}_{k}\leq\left[\left(
\frac{4}{k+1}\sum^{k}_{i=1}\overline{\lambda}_{i}+\overline{\lambda}_{1}\right)^{2}-\frac{20}{k+1}\sum^{k}_{i=0}\left(\overline{\lambda}_{i}
-\frac{1}{k+1}\sum^{k}_{j=1}\overline{\lambda}_{j}\right)^{2}\right]^{\frac{1}{2}};\end{aligned}
\end{equation*}
and when $M^{n}$ is an $n$-dimensional compact minimal submanifold without boundary in a unit sphere $\mathbb{S}^{N}(1)$, then the eigenvalues of the
close eigenvalue problem \eqref{Eigen-Prob-closed} satisfy

\begin{equation*}\begin{aligned}\overline{\lambda}_{k+1}-\overline{\lambda}_{k}\leq2\left[\left(
\frac{2}{n}\frac{1}{k}\sum^{k}_{i=0}\overline{\lambda}_{i}+\frac{n}{2}\right)^{2}-\left(1+\frac{4}{n}\right)\frac{1}{k+1}\sum^{k}_{j=0}\left(\overline{\lambda}_{j}
-\frac{1}{k}\sum^{k}_{i=0}\overline{\lambda}_{i}\right)^{2}\right]^{\frac{1}{2}}.\end{aligned}
\end{equation*} In \cite{Z2},
the author  studied the closed eigenvalue problem \eqref{Eigen-Prob-closed} of
Laplacian and obtained a similar optimal upper bound. As a further interest, the author also investigated the eigenvalues of the Laplacian
on compact homogeneous Riemannian manifolds without boundary in \cite{Z2}.

We suppose that $f$ is a smooth function on $M^{n}$. The
triple $(M^{n}, g, e^{-f} dv)$ is called  a metric measure space
with weighted volume density $e^{-f}dv$. Furthermore, we say that the
triple $(M^{n}, g, e^{-f} dv)$ is an $n$-dimensional complete metric measure space if $M^{n}$ is a complete Riemannian manifold with dimension $n$, while the
triple $(M^{n}, g, e^{-f} dv)$ is an $n$-dimensional closed metric measure space if $M^{n}$ is a closed Riemannian manifold with dimension $n$.  The metric measure spaces also arise in smooth
collapsed Gromov-Hausdorff limits. So-called Bakry-\'{E}mery Ricci
tensor ${\rm Ric}^{f}$ corresponding to weighted metric measure
spaces is a very important curvature quantity, which is defined by
\begin{equation}
\label{1.1}{\rm Ric}^{f}:={\rm
Ric}+{\rm Hess}f,
\end{equation}
where ${\rm Ric}$  and ${\rm Hess}f$ denote  Ricci tensor of $M^n$
and Hessian of $f$, respectively  (see
\cite{BE, LW}).
When $f$ is a constant, we have
\begin{equation}
\label{1.2}{\rm Ric}^{f}= {\rm Ric}.
\end{equation}
Therefore, the Bakry-\'{E}mery Ricci tensor  is naturally viewed as
an extension of the Ricci tensor. Recently,  a great deal of
significant  results under assumption on the Bakry-\'{E}mery Ricci
tensor have  been obtained. For instances,   A. Lichnerowicz
\cite{Lic1,Lic2} has extended the classical Cheeger-Gromoll
splitting theorem   to the metric measure spaces with ${\rm Ric}^f
\geq 0$ and $f$ is bounded,
 G. F. Wei and W. Wylie in \cite{WW}
have proved the weighted volume comparison theorems; O. Munteanu and J. Wang \cite{MW1, MW2} have established
 gradient estimates for positive weighted harmonic functions.
The metric measure space has studied by many geometric analysis  (cf:
\cite{AN,B,CH2,CaZ,CP,MW1,MW2,W,W2}) during the last twenty years. Next, we give definition of the drifting Laplacian associated
with the metric measure space:
\begin{equation*}
\Delta_{f}u:=\Delta u-\langle\nabla f,\nabla u\rangle=e^{f}{\rm
div}\left(e^{-f}\nabla u\right).
\end{equation*}
It is not difficult to see that drifting Laplacian is a self-adjoint operator with
respect to the weighted volume measure $e^{-f}dv$, i.e.,\begin{equation}
\begin{aligned}
\label{self-adjiont} -\int_{M^{n}}\langle\nabla u,\nabla w\rangle e^{-f}dv
=\int_{M^{n}}u(\Delta_{f}w)e^{-f}dv=\int_{M^{n}}w(\Delta_{f}u)e^{-f}dv,
\end{aligned}
\end{equation}
and it is an important elliptic operator which is
widely used in the probability theory and geometrical analysis. In particular, many mathematicians pay more and more attention to the research of eigenvalue of
the drifting Laplacian in recent years. For this recent developments, we refer to \cite{AN,CL,FLL,MD,LW,MW1,MW2, SZ,WW,W1}  and the references therein.
On one hand, L. Ma and S.-H. Du \cite{MD} and H. Li and
Y. Wei \cite{LW} have studied  the  Reilly formula of the
Witten-Laplacian version  to obtain a lower bound of the first
eigenvalue for the Witten-Laplacian on the $f$-minimal hypersurface.
Furthermore,  they have given  a Lichnerowicz type lower bound for
the first eigenvalue of the Witten-Laplacian on compact manifolds
with positive Bakry-\'{E}mery Ricci curvature. In 2013, A. Futaki and Y. Sano
\cite{FuS}  have  studied the lower bound of the first eigenvalue of
the Witten-Laplacian on compact manifolds $M^n$ if the
Bakry-\'{E}mery Ricci curvature bounded from below by $(n-1)K$ and
obtained the following:
\begin{equation}\label{fs-in}
\lambda_{1}\geq\frac{\pi^{2}}{d^{2}}+\frac{31K}{100},
\end{equation}
and A. Futaki, H. Li and X.-D. Li \cite{FLL} (cf. \cite{AN})  have also improved the above result to
\begin{equation*}
\lambda_{1}\geq\sup_{s\in(0,1)}\Big{\{}
4s(1-s)\frac{\pi^{2}}{d^{2}}+sK\Big{\}},
\end{equation*}
where $d$ is
the diameter of $(M^{n},g)$. As an application,  an upper  bound
of the diameter  of $(M^{n},g)$ has been obtained. In addition, under the assumption $Ric_{f} \geq -(n-1)k$ for some $k\geq 0$, N. Charalambous, Z. Lu  and J. Rowlett obtained \cite{CLR}:

\begin{equation}\label{first-eigen}\lambda_{1}\geq
\frac{\pi^{2}}{d^{2}}
\exp(-c_{n}\sqrt{kd^{2}}),\end{equation}
where $d$ is the diameter of $M$ with respect to $g$, and $c_{n}$ is a constant depending only
on $n$.
In \cite{CLR},   N. Charalambous, Z. Lu  and J. Rowlett proved the Bakry-\'{E}mery maximum principle. Applying this result, they proved the eigenvalue inequality \eqref{fs-in}
given by A. Futaki and Y. Sano \cite{FS}. We note that the corresponding
Riemannian case is proved by J. Ling \cite{Ling}. On the other hand,  upper bounds for
the first eigenvalue of the drifting Laplacian on complete Riemannian
manifolds  have been studied in   \cite{MW1,MW2, SZ,W1}. In
particular, J. Wu in \cite{W1} (also see \cite{W2}) established an upper bounds for the first
eigenvalue of the drifting  Laplacian on compact gradient Ricci soliton
if $f$ is bounded. Assume that $(M^{n},g,f)$ is a compact metric measure space without boundary, and  $\epsilon> 0$. If
$$Ric_{f}-\epsilon\nabla f\otimes\nabla f\geq-(n-1)K, for  \ \ K\geq0,$$
then we have (see \cite{CLR}) the following estimate:
$$\lambda_{k}\leq C(n,\epsilon)(K+ k^{2}/d^{2}), \forall k\in \mathbb{N},$$
where $d$ is the diameter of $M$ and $C(n,\epsilon)$ is a constant depending on $n$ and $\epsilon$.
Furthermore, we assume that $K=0$, then, by using make of \eqref{first-eigen}, we have

\begin{equation}\label{FS-eigen}\lambda_{k}\leq C(n,\epsilon)\lambda_{1}.\end{equation}
If $(M,g,f)$ is a compact Bakry-\'{E}mery manifold with non-negative Bakry-\'{E}mery Ricci curvature, then, in 2013,
K. Funano and T. Shioya proved \cite{FS} the following stronger and somewhat
surprising inequality:
$$\lambda_{k}\leq C_{k}\lambda_{1},$$where $C_{k}$ is a positive
constant
which depends only on $k$ and in particular is independent of  $(M,g,f)$.
Using an example, K. Funano and T. Shioya showed that the non-negativity of curvature is a necessary condition (see \cite{FS}).
The proof relies on a geometric theory of concentration of metric
measure spaces due to M. Gromov \cite{Gro}. We also note that
A. Hassannezhad demonstrated upper bounds for the eigenvalues without curvature
assumptions \cite{Has}.
\vskip 3mm
In this paper, we consider the following Dirichlet problem of drifting Laplacian:
\begin{equation}
\left\{ \begin{aligned} \label{Eigenvalue-Problem}\Delta_{f}u=-\lambda u,\ \ &{\rm
in}\ \ \ \ \Omega,
         \\
u=0,\ \ &{\rm on}\ \ \partial\Omega,
                          \end{aligned} \right.
                          \end{equation}where
$\Omega\subset M^{n}$ is a bounded domain with piecewise smooth
boundary $\partial\Omega$ in an $n$-dimensional complete metric
measure space $(M^{n},g,e^{-f})$. It is clear that eigenvalue problem \eqref{Eigenvalue-Problem} is exactly eigenvlue \eqref{Eigen-Prob-Lapl} when $f$
is a constant.
If $\lambda_{i}$ is the $i$-th
eigenvalue of this problem,
then the spectrum of the Dirichlet  problem \eqref{Eigenvalue-Problem}  is
discrete and satisfies the following:

\begin{equation*}
0<\lambda_{1}<\lambda_{2}\leq\lambda_{3}\leq\cdots\leq\lambda_{k}\leq\cdots\rightarrow+\infty,
\end{equation*}
where each eigenvalue is repeated according to its multiplicity. All through this paper, we always assume that the dimensional $n$ is larger that one. For this eigenvalue problem, our first result is the
following:

\begin{thm}
\label{thm1.1}Let $(M^{n},g,f)$ be a complete metric measure space, where $M^{n}$ is an $n$-dimensional complete Riemannian manifolds isometrically
immersed in a Euclidean space $\mathbb{R}^{n+p}$, and $\lambda_{i}$ be the $i$-th $(i=1,2,\cdots,k)$
eigenvalue of the Dirichlet problem \eqref{Eigenvalue-Problem}. Then we have

\begin{equation}\label{z1}\begin{aligned}\lambda_{k+1}-\lambda_{k}\leq
C_{n,\Omega,f}k^{\frac{1}{n}},\end{aligned}
\end{equation}
where $C_{n,\Omega,f}$ is a constant dependent on $\Omega$ itself and the dimension $n$.
\end{thm}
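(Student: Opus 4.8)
The plan is to follow the universal-inequality strategy of H.-C. Yang and Q.-M. Cheng--H.-C. Yang, adapted to the drifting Laplacian on an immersed submanifold, and then to upgrade the resulting bound on $\lambda_{k+1}$ to a bound on the gap $\lambda_{k+1}-\lambda_k$ of the sharp order $k^{\frac1n}$. Let $\{u_i\}$ be an $L^2(e^{-f}dv)$-orthonormal system of eigenfunctions of \eqref{Eigenvalue-Problem}, and let $x_1,\dots,x_{n+p}$ be the coordinate functions of the isometric immersion $M^n\hookrightarrow\mathbb{R}^{n+p}$. First I would build trial functions of the form $\phi_{\alpha i}=x_\alpha u_i-\sum_{j}a_{\alpha ij}u_j$, choosing the constants $a_{\alpha ij}=\int_\Omega x_\alpha u_iu_j\,e^{-f}dv$ so that each $\phi_{\alpha i}$ is orthogonal to $u_1,\dots,u_k$ with respect to the weighted measure. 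The Rayleigh--Ritz characterization for the self-adjoint operator $\Delta_f$ (using \eqref{self-adjiont}) then yields $\lambda_{k+1}\int_\Omega\phi_{\alpha i}^2\,e^{-f}dv\le\int_\Omega|\nabla\phi_{\alpha i}|^2\,e^{-f}dv$ for each $\alpha$ and $i$.

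The core computation is the commutator identity $\Delta_f(x_\alpha u_i)-x_\alpha\Delta_f u_i=u_i\,\Delta_f x_\alpha+2\langle\nabla x_\alpha,\nabla u_i\rangle$, together with the immersion identities $\sum_\alpha|\nabla x_\alpha|^2=n$ and $\sum_\alpha(\Delta x_\alpha)^2=|\mathbf{H}|^2$ (up to the dimensional normalization of the mean curvature vector $\mathbf{H}$). Expanding the Rayleigh quotients, summing over $\alpha$ and over $i=1,\dots,k$, and carrying out the standard cancellation of the $a_{\alpha ij}$ terms, I expect to reach a Yang-type inequality of the schematic form
\begin{equation*}
\sum_{i=1}^k(\lambda_{k+1}-\lambda_i)^2\le\sum_{i=1}^k(\lambda_{k+1}-\lambda_i)\Big(\tfrac{4}{n}\lambda_i+R_i\Big),
\end{equation*}
where the remainder $R_i$ collects the contributions of $\Delta_f x_\alpha=\Delta x_\alpha-\langle\nabla f,\nabla x_\alpha\rangle$, i.e.\ of the squared mean curvature $|\mathbf{H}|^2$ and of the drift $|\nabla f|^2$. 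Because $\Omega$ is a fixed bounded domain, $|\mathbf{H}|$, $|\nabla f|$ and the extrinsic diameter of $\Omega$ are all bounded by constants depending only on $\Omega$ and $f$; hence $R_i\le c_1=c_1(n,\Omega,f)$ uniformly in $i$.

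With this universal inequality in hand, the explicit order follows from the Cheng--Yang recursion formula exactly as in \eqref{Cheng-Yang-ineq}: one obtains $\lambda_{k+1}\le C_0(n)(\lambda_1+c_1)k^{\frac2n}$. To pass from this to the gap estimate \eqref{z1}, I would not merely bound $\lambda_{k+1}-\lambda_k$ by $\lambda_{k+1}$ (which only yields the order $k^{\frac2n}$); instead, following the author's sharpening in \cite{Z2} and the method behind \eqref{czy-1}, I would re-run the trial-function argument using the single eigenfunction $u_k$ to produce the intermediate inequality $\lambda_{k+1}-\lambda_k\le\frac{4}{\sqrt n}\sqrt{(\lambda_1+c_1)\lambda_{k+1}}$. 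Substituting the bound $\lambda_{k+1}\le C_0(n)(\lambda_1+c_1)k^{\frac2n}$ then gives $\lambda_{k+1}-\lambda_k\le 4(\lambda_1+c_1)\sqrt{C_0(n)/n}\,k^{\frac1n}$, which is \eqref{z1} with $C_{n,\Omega,f}=4(\lambda_1+c_1)\sqrt{C_0(n)/n}$.

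The main obstacle I anticipate lies in this last step: extracting the \emph{square root} factor $\sqrt{\lambda_{k+1}}$—rather than $\lambda_{k+1}$ itself—out of the quadratic universal inequality, which is precisely what lowers the exponent from $\frac2n$ to $\frac1n$ and makes the estimate sharp in the sense realized by $\mathbb{S}^n$. This requires the careful choice of the auxiliary trial function together with a delicate Cauchy--Schwarz step controlling the cross terms $\sum_{\alpha,j}(\cdots)$, while simultaneously verifying that all drift and curvature corrections remain absorbable into the single constant $c_1=c_1(n,\Omega,f)$. Controlling these remainder terms uniformly—so that the final constant genuinely depends only on $n$, $\Omega$ and $f$, and not on $k$—is the technical heart of the argument.
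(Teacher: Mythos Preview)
Your outline matches the paper for the first two ingredients: the Yang-type inequality obtained from the coordinate trial functions $x_\alpha u_i$ and the immersion identities, and the Cheng--Yang recursion giving $\lambda_{k+1}+c\le C_0(n)(\lambda_1+c)k^{2/n}$. These are exactly Proposition~\ref{thm-x-x}, inequality~\eqref{Yang-type-ineq}, and \eqref{zlz-upper} in the paper.

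The genuine gap is in your third step, the ``square-root extraction''. You propose to ``re-run the trial-function argument using the single eigenfunction $u_k$'' to obtain $\lambda_{k+1}-\lambda_k\le\tfrac{4}{\sqrt n}\sqrt{(\lambda_1+c_1)\lambda_{k+1}}$, but a plain Rayleigh--Ritz with $x_\alpha u_i$ (for any fixed $i$) only controls $\lambda_{k+1}-\lambda_i$, not the consecutive gap $\lambda_{k+1}-\lambda_k$; and setting $i=k$ would put $\lambda_k$, not $\lambda_1$, under the square root. The paper's mechanism is substantially different and is the content of Section~\ref{sec2}: one first proves Lemma~\ref{lem2.1}, which already couples $\lambda_{k+1}$ and $\lambda_{k+2}$ simultaneously, by applying the Chen--Zheng--Yang optimization result (Theorem~\ref{thm-c-z-y}, a Lagrange-multiplier argument in $\mathcal{H}^\infty$) to the full sequence of Fourier coefficients $(a_{ij})_{j\ge k+1}$. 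One then inserts the \emph{complex} exponential trial functions $h=e^{i\eta F}$ (Corollary~\ref{coro2.2}), introducing a free parameter $\eta$; optimizing over $\eta$ produces Proposition~\ref{prop2.3}, which bounds $(\lambda_{k+2}-\lambda_{k+1})^2$ directly by $4(\lambda_{k+2}+\tau)\sum_j\|2\langle\nabla F_j,\nabla u_i\rangle+u_i\Delta_f F_j\|^2$. Taking $F_j=\alpha_j x^j$, $i=1$, $\tau=c$, and using Lemma~\ref{lem3.1} then gives \eqref{gap1}, i.e.\ $(\lambda_{k+2}-\lambda_{k+1})^2\le C(\lambda_{k+2}+c)(\lambda_1+c)$, from which \eqref{zlz-upper} finishes the proof. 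Your proposal correctly anticipates the \emph{form} of the intermediate inequality but does not supply this mechanism; the Hilbert-space optimization lemma and the complex-parameter trick are precisely what your ``delicate Cauchy--Schwarz step'' has to be, and neither follows from a second pass with real trial functions.
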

In this paper, we also investigate the eigenvalues
of  the closed eigenvalue problem of drifting Laplacian on compact Riemannian manifolds:
\begin{equation}
\label{f-Laplacian-closed} \Delta_{f}u=-\overline{\lambda} u.
\end{equation}
Spectrum of the closed eigenvalue problem \eqref{f-Laplacian-closed}   is
discrete and satisfies
\begin{equation*}
0=\overline{\lambda}_{0}\leq\overline{\lambda}_{1}\leq\overline{\lambda}_{2}\leq\cdots\leq\overline{\lambda}_{k}\leq\cdots\rightarrow+\infty,
\end{equation*}
where each eigenvalue is repeated according to its multiplicity.

Similarly, we assume that $(M^{n},g, f)$ is an $n$-dimensional  closed metric measure space, which is isometrically
immersed in an $(n+p)$-dimensional Euclidean space $\mathbb{R}^{n+p}$, then we have the following:

\begin{thm}
\label{thm1.2}Let $(M^{n},g,f)$ be a closed metric measure space and $M^{n}$ an $n$-dimensional closed Riemannian manifold isometrically immersed into the Euclidean space
$\mathbb{R}^{n+p}$. Assume that $\overline{\lambda}_{i}$ is the $i$-th $(i=1,2,\cdots,k)$
eigenvalue of the closed eigenvalue problem \eqref{f-Laplacian-closed}. Then we have

\begin{equation}\label{z1}\begin{aligned}\overline{\lambda}_{k+1}-\overline{\lambda}_{k}\leq
C_{n,M^{n},f}k^{\frac{1}{n}},\end{aligned}
\end{equation}
where $C_{n,M^{n},f}$ is a constant dependent on $M^{n}$ itself, function $f$, and the dimension $n$.
\end{thm}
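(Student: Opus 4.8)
The plan is to reduce Theorem~\ref{thm1.2} to a Yang-type universal inequality for the drifting Laplacian on the immersed manifold and then feed that inequality into the Cheng--Yang recursion formula \cite{CY2}, exactly as in the Dirichlet case of Theorem~\ref{thm1.1} and the Euclidean argument of \cite{CZY,Z2}. Let $\{u_i\}_{i\ge 0}$ be an $L^2(e^{-f}dv)$-orthonormal basis of eigenfunctions, $\Delta_f u_i=-\overline{\lambda}_i u_i$, with $u_0$ the constant function corresponding to $\overline{\lambda}_0=0$, and let $y=(y_1,\dots,y_{n+p})$ denote the coordinate functions of the isometric immersion $M^n\hookrightarrow\mathbb{R}^{n+p}$. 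The two geometric facts I would use repeatedly are the isometry identity $\sum_{\alpha=1}^{n+p}|\nabla y_\alpha|^2=n$ and the relation $\Delta_f y_\alpha=H_\alpha-\langle\nabla f,\nabla y_\alpha\rangle$, where $\mathbf{H}=(H_1,\dots,H_{n+p})$ is the mean curvature vector field. Because $M^n$ is closed, the quantities $\sup_M|\mathbf{H}|$, $\sup_M|\nabla f|$, and hence $\sup_M|\Delta_f y_\alpha|$ are all finite, and it is precisely these suprema that will enter the constant $C_{n,M^n,f}$.

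First I would form, for each $\alpha$ and each $i=1,\dots,k$, the trial function $\phi_{\alpha i}=y_\alpha u_i-\sum_{j=0}^{k}a_{\alpha ij}u_j$, where $a_{\alpha ij}=\int_M y_\alpha u_i u_j\,e^{-f}dv$ is chosen so that $\phi_{\alpha i}\perp u_0,\dots,u_k$ in $L^2(e^{-f}dv)$; orthogonality to the constant $u_0$ is the extra bookkeeping forced by the closed problem. The Rayleigh--Ritz characterization then gives $\overline{\lambda}_{k+1}\|\phi_{\alpha i}\|^2\le-\int_M\phi_{\alpha i}\,\Delta_f\phi_{\alpha i}\,e^{-f}dv$. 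Expanding $\Delta_f(y_\alpha u_i)=u_i\Delta_f y_\alpha+2\langle\nabla y_\alpha,\nabla u_i\rangle+y_\alpha\Delta_f u_i$ and using the self-adjointness \eqref{self-adjiont}, I obtain the commutator estimate whose leading contribution is $\sum_\alpha\int_M|\nabla y_\alpha|^2 u_i^2\,e^{-f}dv=n$, with all remaining terms controlled by the geometric suprema above. Summing over $\alpha$ and over $i$, the factor $4/n$ emerges from $\sum_\alpha|\nabla y_\alpha|^2=n$, and one arrives at a universal inequality of the form
\begin{equation}
\sum_{i=1}^{k}(\overline{\lambda}_{k+1}-\overline{\lambda}_{i})^{2}\le \sum_{i=1}^{k}(\overline{\lambda}_{k+1}-\overline{\lambda}_{i})\Big(\frac{4}{n}\overline{\lambda}_{i}+A\Big),
\end{equation}
where $A=A(n,M^n,f)$ absorbs the mean-curvature and drift contributions. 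This is the ``general formula'' promised in the abstract.

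Next, introducing the shift $\Lambda_i=\overline{\lambda}_i+\frac{nA}{4}$ recasts the previous inequality in the clean Yang form
\begin{equation}
\sum_{i=1}^{k}(\Lambda_{k+1}-\Lambda_{i})^{2}\le \frac{4}{n}\sum_{i=1}^{k}(\Lambda_{k+1}-\Lambda_{i})\Lambda_{i},
\end{equation}
to which I would apply the Cheng--Yang recursion formula \cite{CY2} to obtain the explicit second-order control $\Lambda_{k+1}\le C_0(n)k^{2/n}\Lambda_1$, the analogue of \eqref{Cheng-Yang-ineq}. Finally, to pass from this to the sharp first-order gap bound I would follow the extraction of \cite{CZY,Z2}: using $(\Lambda_{k+1}-\Lambda_i)\ge(\Lambda_{k+1}-\Lambda_k)$ for $i\le k$ together with a refinement of Yang's first inequality, one isolates $\overline{\lambda}_{k+1}-\overline{\lambda}_k\le C\sqrt{\Lambda_1\Lambda_{k+1}}\le C_{n,M^n,f}\,k^{1/n}$, the last step being where $\Lambda_{k+1}\le C_0(n)k^{2/n}\Lambda_1$ converts the square root into the order $k^{1/n}$.

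The step I expect to be the main obstacle is this last one: a crude use of Yang's first inequality, estimating $k(\Lambda_{k+1}-\Lambda_k)^2\le\sum_i(\Lambda_{k+1}-\Lambda_i)^2$, only yields the order $k^{2/n}$, because the sum is dominated by the large terms with small index $i$. Obtaining the sharp order $k^{1/n}$ requires the refined trial-function construction of \cite{Z2}, in which the test directions are adapted so that the gap $\overline{\lambda}_{k+1}-\overline{\lambda}_k$ is compared against $\sqrt{\overline{\lambda}_{k+1}}$ rather than against $\overline{\lambda}_{k+1}$ itself. A secondary technical point, special to the closed case, is the uniform control of the drift term $\langle\nabla f,\nabla y_\alpha\rangle$ alongside the mean-curvature term; here compactness of $M^n$ is essential, since it guarantees that the suprema defining $A$ and $C_{n,M^n,f}$ are finite, which is exactly why the constant is permitted to depend on $M^n$ and on $f$.
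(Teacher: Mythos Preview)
Your overall architecture is correct and matches the paper: a Yang-type inequality for the drifting Laplacian on the immersed closed manifold, a shift, the Cheng--Yang recursion to get $\Lambda_{k+1}\le C_0(n)k^{2/n}\Lambda_1$, and then an extraction step to pass from $k^{2/n}$ growth of eigenvalues to $k^{1/n}$ growth of gaps. You also correctly flag the extraction step as the real obstacle and you are right that a naive use of Yang's first inequality gives only $k^{2/n}$.

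What your proposal lacks is the concrete mechanism for that extraction, and this is the substantive content of the paper's proof. The paper does \emph{not} obtain the gap bound by post-processing Yang's first inequality; instead it proves a separate ``gap formula'' (Lemma~\ref{lem2.1}, Corollary~\ref{coro2.2}, Proposition~\ref{prop2.3}, and their closed analogue Proposition~\ref{gen-for-2}) whose output is directly
\[
\sum_{j}\frac{a_j^2+b_j}{2}\,(\overline{\lambda}_{k+2}-\overline{\lambda}_{k+1})^2\le 4(\overline{\lambda}_{k+2}+\tau)\sum_j\|2\langle\nabla F_j,\nabla u_i\rangle+u_i\Delta_f F_j\|^2.
\]
Three ideas make this work that are absent from your sketch. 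First, the trial function is not $y_\alpha u_i$ but the complex exponential $h=e^{i\eta F}$, which injects a free parameter $\eta$; optimizing over $\eta$ produces the term $2\sqrt{(\overline{\lambda}_{k+2}-\overline{\lambda}_i)(\overline{\lambda}_{k+1}-\overline{\lambda}_i)}$. Second, the passage from the Rayleigh quotient to an inequality involving \emph{both} $\overline{\lambda}_{k+1}$ and $\overline{\lambda}_{k+2}$ relies on Theorem~\ref{thm-c-z-y}, a Lagrange-multiplier optimization on $\ell^2$ sequences due to Chen--Zheng--Yang; this is what replaces your ``refinement of Yang's first inequality.'' Third, the algebraic identity
\[
a^2\big((\overline{\lambda}_{k+2}-\overline{\lambda}_i)+(\overline{\lambda}_{k+1}-\overline{\lambda}_i)\big)-2b\sqrt{(\overline{\lambda}_{k+2}-\overline{\lambda}_i)(\overline{\lambda}_{k+1}-\overline{\lambda}_i)}\ge \tfrac{a^2+b}{2}\big(\sqrt{\overline{\lambda}_{k+2}-\overline{\lambda}_i}-\sqrt{\overline{\lambda}_{k+1}-\overline{\lambda}_i}\big)^2
\]
(valid when $a^2\ge b$) isolates the difference of square roots; multiplying by the conjugate $(\sqrt{\cdot}+\sqrt{\cdot})^2\le 4(\overline{\lambda}_{k+2}+\tau)$ then converts this into $(\overline{\lambda}_{k+2}-\overline{\lambda}_{k+1})^2$. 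Only after this gap formula is in hand does one take $F_j=\alpha_j y_j$, invoke Lemma~\ref{lem3.1}, set $i=1$, and combine with the recursion bound $\overline{\lambda}_{k+2}+c\le C_0(n)(\overline{\lambda}_1+c)(k+1)^{2/n}$ to read off the order $k^{1/n}$. Your Yang-inequality derivation is still needed, but only as input to the recursion, not as the source of the gap estimate.
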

\begin{rem}In theorem \ref{thm1.1} and theorem \ref{thm1.2}, the constants $C_{n,\Omega,f}$ and $C_{n,M^{n},f}$ are allowed to be different in different backgrounds.\end{rem}
In 1982, R. S. Hamilton introduced Ricci solitons \cite{Ha1,Ha2},
which are self-similar solutions to the Ricci flow. Because Ricci solitons represent
the fixed points of the Ricci flow, they are an important object in understanding the Ricci flow.
Ricci solitons is an important example of complete metric measure space, which is defined as follows:
 Let $M^{n}$ be a complete Riemannian manifold with
smooth metric $g=(g_{ij})$, then $(M^{n}, g, f)$ is called a gradient
Ricci soliton if there is a constant  $\rho$ such that
\begin{equation}
\label{soliton-equation} R_{ij} +f_{ij} = \rho g_{ij},
\end{equation}
where $R_{ij}$ and $f_{ij}$  denote components of the Ricci tensor and Hessian of $f$, respectively. The Ricci soliton is said to be shrinking, steady and
expanding according as $\rho
> 0$, $\rho = 0$ or $\rho < 0$, respectively.
The function $f$ is called a potential function of the gradient
Ricci soliton (cf. \cite{C}).  From the equation \eqref{soliton-equation}, it is not difficult to see that Ricci solitons are generalizations
of Einteins metrics. We investigate the eigenvalue of the Dirichlet problem
\eqref{Eigenvalue-Problem} of drifting Laplacian on complete noncompact Ricci solitons and prove the following:
\begin{thm}
\label{thm5.5}\label{thm-z-3} Let $(M^{n},g_{ij},f)$ be an $n$-dimensional compact
gradient Ricci Soliton. Then, for any $j$, eigenvalues of the closed
eigenvalue problem \eqref{Eigen-Prob-closed} of drifting Laplacian
satisfy

\begin{equation}\label{z-4-3}\begin{aligned}\overline{\lambda}_{k+1}-\overline{\lambda}_{k}\leq
C_{n,M^{n},f}(k+1)^{\frac{1}{n}},\end{aligned}
\end{equation}
where $$C_{n,M^{n},f}=(\lambda_{1}+c)\sqrt{\frac{32\overline{\alpha}^{2}C_{0}(n)}{n\alpha^{2}+\sum_{j=1}^{n+p}b_{j}}},$$ $C_{0}(n)$ is the same as the one in \eqref{Cheng-Yang-ineq},
\begin{equation*}c=\frac{1}{4}\inf_{\psi\in\Psi}\max_{M^{n}}\left(n^{2}H^{2}+4|\rho f-\rho\overline{c}|
+2\rho f+n\rho-2\rho\overline{c}-S\right),\end{equation*} and
\begin{equation*}\overline{c}=\frac{\int_{M^{n}}fe^{-f}dv}{\int_{M^{n}}e^{-f}dv}.\end{equation*}

\end{thm}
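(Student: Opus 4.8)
The plan is to obtain the explicit constant by specializing the general gap estimate for the closed problem (Theorem~\ref{thm1.2}) to the soliton and then converting every geometric quantity into soliton data through the soliton equation~\eqref{soliton-equation}. A compact gradient Ricci soliton $(M^n,g,f)$ is a closed metric measure space, and by Nash's theorem it admits an isometric immersion $\psi$ into some $\mathbb{R}^{n+p}$, so the hypotheses of Theorem~\ref{thm1.2} hold. I would first re-run the underlying Rayleigh--Ritz argument: take $L^2(e^{-f}dv)$-orthonormal eigenfunctions $u_i$ of $\Delta_f$, use as test functions the components $\phi_\alpha$ of $\psi$ (rendered orthogonal to the constant eigenfunction of $\overline\lambda_0=0$), project off the first $k$ eigenspaces, and feed the resulting inequality into the Cheng--Yang recursion formula behind~\eqref{Cheng-Yang-ineq}. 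Because the closed problem includes the zero eigenvalue, the recursion is run with the shifted index and produces a bound of the form $\overline\lambda_{k+1}-\overline\lambda_k\le C(k+1)^{1/n}$; the content of the theorem is to identify $C$ with $C_{n,M^n,f}$.

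The constant is governed by two geometric quantities. The gradient quantity $\sum_\alpha|\nabla\phi_\alpha|^2$ (which equals $n$ for the coordinate functions) produces the numbers $\alpha,\overline\alpha$ and the lower-order terms $b_j$ from the proof of Theorem~\ref{thm1.2}, and hence the factor $\sqrt{32\overline\alpha^2C_0(n)/(n\alpha^2+\sum_j b_j)}$ with $C_0(n)$ the Cheng--Yang constant. The Laplacian quantity produces the factor $\lambda_1+c$. Using the Beltrami formula $\Delta\psi=\vec H$ one has $\Delta_f\phi_\alpha=H_\alpha-\langle\nabla f,\nabla\phi_\alpha\rangle$, and since $\sum_\alpha H_\alpha\nabla\phi_\alpha$ is the tangential part of the normal vector $\vec H$, it vanishes; therefore the cross term drops and
\begin{equation*}
\sum_\alpha(\Delta_f\phi_\alpha)^2=|\vec H|^2+|\nabla f|^2=n^2H^2+|\nabla f|^2 .
\end{equation*}
The soliton equation now eliminates $|\nabla f|^2$: tracing~\eqref{soliton-equation} gives $\Delta f=n\rho-S$, while the contracted Bianchi identity yields the first integral $S+|\nabla f|^2-2\rho f\equiv C$. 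Pairing $\Delta_f f=\Delta f-|\nabla f|^2=n\rho-2\rho f-C$ with the constant function against $e^{-f}dv$ and using the weighted self-adjointness~\eqref{self-adjiont} forces $C=n\rho-2\rho\overline c$, so that
\begin{equation*}
|\nabla f|^2=2\rho f+n\rho-2\rho\overline c-S,\qquad \Delta_f f=-2\rho(f-\overline c).
\end{equation*}
Substituting the first identity yields the curvature-and-potential part of $c$, while the remaining summand $4|\rho f-\rho\overline c|=2|\Delta_f f|$ is the lower-order correction forced by the kernel of $\Delta_f$ in the closed problem; taking the maximum over $M^n$ and the infimum over admissible immersions $\psi\in\Psi$ gives exactly the stated $c$.

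Assembling the two factors produces $C_{n,M^n,f}=(\lambda_1+c)\sqrt{32\overline\alpha^2C_0(n)/(n\alpha^2+\sum_j b_j)}$ and completes the estimate. I expect the main obstacle to be twofold. First, establishing the general closed-problem formula of Theorem~\ref{thm1.2} with precisely the constants $\alpha,\overline\alpha,b_j$ in the stated form, and tracking the correction coming from the constant eigenfunction, since it is this correction that produces the extra $2|\Delta_f f|=4|\rho f-\rho\overline c|$ rather than a mere constant shift altering the gradient or Laplacian of $\phi_\alpha$. Second, verifying that $S+|\nabla f|^2-2\rho f$ is genuinely constant and pinning that constant down as $n\rho-2\rho\overline c$ through the weighted integration~\eqref{self-adjiont}; this is the step that simultaneously ties together the scalar curvature $S$, the potential $f$, and the weighted mean $\overline c$ in the final formula.
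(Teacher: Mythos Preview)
Your approach is essentially the same as the paper's: specialize the general closed-problem gap machinery (Proposition~\ref{gen-for-2}) with the coordinate functions of an isometric immersion, compute the soliton-specific constant $c$ via the identities $S+\Delta f=n\rho$ and $S+|\nabla f|^{2}=2\rho f+\widetilde c$ (pinning $\widetilde c=n\rho-2\rho\overline c$ by weighted integration), and then invoke the Cheng--Yang recursion (Lemma~\ref{lem4.2}) to convert the Yang-type inequality into the $(k+1)^{1/n}$ bound.

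One point of confusion worth flagging: the summand $4|\rho f-\rho\overline c|=2|\Delta_{f}f|$ does \emph{not} arise from the kernel of $\Delta_{f}$ in the closed problem. It comes from the cross term in the expansion of $\|2\langle\nabla F_{j},\nabla u_{i}\rangle+u_{i}\Delta_{f}F_{j}\|^{2}$: after using $\sum_{j}\Delta x^{j}\nabla x^{j}=0$ (Lemma~\ref{lem3.1}) to kill the $\Delta$-part, the surviving piece is $-4\sum_{j}\int u_{i}\langle\nabla F_{j},\nabla u_{i}\rangle\langle\nabla f,\nabla F_{j}\rangle e^{-f}dv$, which after integration by parts is bounded by $2\int u_{i}^{2}|\Delta_{f}f|\,e^{-f}dv$. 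This is exactly how the paper obtains the term (see the estimate leading to \eqref{z-4.7} and the analogous step in the proof of Theorem~\ref{thm1.1}). Your identification of $c$ with $\tfrac14(n^{2}H^{2}+2|\Delta_{f}f|+|\nabla f|^{2})$ and the subsequent substitution of the soliton formulas is correct; only the provenance of the middle term is misdescribed.
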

 Let $X:M^{n}\rightarrow\mathbb{R}^{n+p}$ be an $n$-dimensional  submanifold
in  the Euclidean space $\mathbb{R}^{n+p}$. If $X:M^{n}\rightarrow\mathbb{R}^{n+p}$ satisfies $$n\vec H=-X^N,$$ where $\vec H$ and
$X^N$ denote the mean curvature vector and  the orthogonal, then we say that it is called a self-shrinker
projection of $X$  into the normal bundle of $M^n$, respectively.
As another application of the general formula \eqref{gen-for}, we consider the self-shrinker
of the mean curvature flow, which is introduced by G. Huisken in \cite{H}(cf. T. H. Colding
and W. P. Minicozzi \cite{CM}).
\begin{thm}
\label{thm-shrinker} Let $H$ and $X$ denote the mean curvature of $M^n$ and the
position vector of $M^n$, respectively. Then, for  an $n$-dimensional complete self-shrinker $M^{n}$
in  the Euclidean space $\mathbb{R}^{n+p}$, eigenvalues  of the Dirichlet
problem \eqref{Eigenvalue-Problem} of drifting Laplacian with $f=\frac{|X|^2}{2}$ satisfy
\begin{equation*}\begin{aligned}\lambda_{k+1}-\lambda_{k}\leq C_{n,\Omega,X}k^{\frac{1}{n}},\end{aligned}
\end{equation*}
where $$C_{n,\Omega,X}=(\lambda_{1}+c)\sqrt{\frac{32\overline{\alpha}^{2}C_{0}(n)}{n\alpha^{2}+(n+p)\beta}},$$
$$c=\frac{1}{4}\inf_{\psi\in \Psi}\max_{\Omega}\left(n^{2}H^{2}+|2n-|X|^2|+|X|^2\right),$$and $\Psi$ denotes the set of all isometric immersions from $M^n$
into a Euclidean space.
\end{thm}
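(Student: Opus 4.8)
The plan is to run exactly the scheme used for Theorems \ref{thm1.1} and \ref{thm5.5}: apply the general formula \eqref{gen-for} to the Dirichlet problem \eqref{Eigenvalue-Problem} on a bounded domain $\Omega$ of the self-shrinker $M^n$, taking as trial functions the components $\psi_1,\dots,\psi_N$ of an arbitrary isometric immersion $\psi\colon M^n\to\mathbb{R}^N$, and then optimizing over $\psi\in\Psi$. Writing $\{u_i\}$ for an $L^2(e^{-f}dv)$-orthonormal system of eigenfunctions, the general formula encodes a Yang-type inequality for the shifted quantities $\lambda_i+c$ once the purely geometric ingredients attached to $\psi$ and to the drift $f=\tfrac12|X|^2$ are inserted. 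The immersion $\psi$ only supplies admissible test functions, while the self-shrinker structure enters through $\Delta_f$ and through $f$ itself; here $n^2H^2$ denotes the mean-curvature term of the trial immersion $\psi$, which is why the infimum over $\psi\in\Psi$ is meaningful.

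First I would record the geometric identities. For any isometric immersion $\psi$ one has $\sum_{\alpha=1}^{N}|\nabla\psi_\alpha|^2=n$ and $\Delta\psi_\alpha=(n\vec H_\psi)_\alpha$, so $\sum_\alpha(\Delta\psi_\alpha)^2=n^2H^2$ with $H=|\vec H_\psi|$; this accounts for the term $n^2H^2$, while the constants $\alpha,\overline\alpha,\beta$ and the denominator $n\alpha^2+(n+p)\beta$ arise from the uniform gradient bounds on the test functions entering \eqref{gen-for}, with $(n+p)$ reflecting the ambient dimension of the self-shrinker. Next, since $f=\tfrac12|X|^2$ gives $\nabla f=X^T$, one has $|\nabla f|^2=|X^T|^2\le|X|^2$, which produces the term $|X|^2$. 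The self-shrinker equation $n\vec H=-X^N$ then yields the two clean weighted identities
\begin{equation*}
\Delta_f x_\alpha=-x_\alpha,\qquad \Delta_f|X|^2=2\bigl(n-|X|^2\bigr),
\end{equation*}
the first because $\Delta x_\alpha=(n\vec H)_\alpha=-(X^N)_\alpha$ cancels $(X^N)_\alpha$ against $\langle\nabla f,\nabla x_\alpha\rangle=x_\alpha-(X^N)_\alpha$, and the second from $\tfrac12\Delta|X|^2=n-|X^N|^2$ together with $\langle\nabla f,\nabla|X|^2\rangle=2|X^T|^2$. Finally, since $\vec H_\psi$ is normal to $\psi(M)$ while $d\psi(\nabla f)$ is tangent to it, the cross term $\langle n\vec H_\psi,d\psi(\nabla f)\rangle$ vanishes, so that $\sum_\alpha(\Delta_f\psi_\alpha)^2=n^2H^2+|X^T|^2$.

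With these in hand I would assemble the constant: the zeroth-order term $\sum_\alpha(\Delta_f\psi_\alpha)^2$ together with the first-order contribution governed by $\Delta_f|X|^2=2(n-|X|^2)$ yields, after the maximum over $\Omega$ and the infimum over $\psi\in\Psi$, exactly
\begin{equation*}
c=\frac14\inf_{\psi\in\Psi}\max_\Omega\bigl(n^2H^2+|2n-|X|^2|+|X|^2\bigr).
\end{equation*}
Substituting into \eqref{gen-for} produces a Yang-type inequality for $\{\lambda_i+c\}$; feeding this into the Cheng--Yang recursion underlying \eqref{Cheng-Yang-ineq} gives $\lambda_{k+1}+c\le C_0(n)k^{2/n}(\lambda_1+c)$, whence the gap estimate $\lambda_{k+1}-\lambda_k\le C_{n,\Omega,X}k^{1/n}$ with the stated $C_{n,\Omega,X}$ follows, word for word as in Theorems \ref{thm1.1} and \ref{thm5.5}.

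The main obstacle is the bookkeeping that produces precisely the middle term $|2n-|X|^2|$: one must track the first-order (non-squared) contributions of $\Delta_f\psi_\alpha$ in \eqref{gen-for}, which couple the drift $f=\tfrac12|X|^2$ (tied to the original immersion $X$) to the a priori unrelated test immersion $\psi$, and show that after invoking $n\vec H=-X^N$ and $\Delta_f|X|^2=2(n-|X|^2)$ they collapse into this single absolute value; the squared term alone, as computed above, only delivers $n^2H^2+|X^T|^2$. A secondary point requiring care is that the passage from the universal inequality to the $k^{1/n}$ gap must be run for the shifted spectrum $\{\lambda_i+c\}$, so that $c$ enters solely through the factor $(\lambda_1+c)$; this is where the positivity $\lambda_1>0$ of the Dirichlet problem and the nonnegativity of $c$ are used to keep $\lambda_1+c>0$.
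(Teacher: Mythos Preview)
Your approach is correct and matches the paper's. The ``main obstacle'' you flag dissolves once you notice that the paper never tracks the cross terms between the trial immersion $\psi$ and the self-shrinker data separately: it simply reuses the general expression
\[
c=\tfrac14\inf_{\psi\in\Psi}\max_{\Omega}\bigl(n^{2}H^{2}+2|\Delta_{f}f|+|\nabla f|^{2}\bigr)
\]
already established in the proof of Theorem~\ref{thm1.1} (cf.\ Corollary~\ref{corr-3.2}), and then evaluates the two drift quantities directly from $f=\tfrac12|X|^{2}$: namely $\Delta_{f}f=n-|X|^{2}$ (which is precisely half of your identity $\Delta_{f}|X|^{2}=2(n-|X|^{2})$) and $|\nabla f|^{2}=|X^{T}|^{2}\le|X|^{2}$. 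These give the middle and last terms of $c$ immediately, with no bookkeeping needed. The remaining steps---the Yang-type inequality for the shifted spectrum via \eqref{gen-for}, the Cheng--Yang recursion yielding $\lambda_{k+1}+c\le C_{0}(n)(\lambda_{1}+c)k^{2/n}$, and Proposition~\ref{prop2.3} for the gap estimate---are exactly as you outline.
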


This paper is organized as follows. In section
\ref{sec2}, we prove several key lemmas. By utilizing those key lemmas,  we prove a general formula
of the eigenvalues of the Dirichlet problem. By the same method, we establish the corresponding general formulas with respect to the closed eigenvalue problem. By
utilizing those general formulas, we give the proofs of
theorem \ref{thm1.1} and theorem \ref{thm1.2} in section \ref{sec3}. In  last part of section \ref{sec3}, we give a gap conjectures of consecutive eigenvalues of the Dirichlet problem \eqref{Eigenvalue-Problem}
of drifting Laplacian on complete Riemannian manifolds. In section \ref{sec4}, we investigate the eigenvalue of the drifting Laplacian on the complete Ricci solitons. As
some further applications, we give the explicit upper bounds for the consecutive eigenvalues of Laplacian on some important Ricci solitons in section \ref{sec5}.
As a further interest, we give the explicit upper bounds for the consecutive eigenvalues of Laplacian on self-shrinkers in section \ref{sec6}. In section \ref{sec7}, we consider the
eigenvalue problem of drifting Laplacian on splitting Riemannian manifolds.
The last section is an appendix, we give the proof of theorem \ref{thm-product-soliton} in this appendix.

\section{General formulas for eigenvalues} \label{sec2}
\vskip3mm

\noindent In this section, we would like to establish  some  general formulas for eigenvalues, which generalizes
a formula of  D. Chen, T. Zheng and H.-C. Yang in \cite{CZY} for the case of Laplacian.  Firstly, we shall use the same notations as in \cite{CZY}.
We define $\mathcal{H}^{\infty}$ by
$$\mathcal{H}^{\infty}=\left\{x=(x_{j})_{j=1}^{\infty}\Bigg{|}x_{j}\in\mathbb{R}\left(\sum^{\infty}_{j=1}x_{j}^{2}\right)^{\frac{1}{2}}<+\infty\right\},$$
with inner product $\langle\cdot,\cdot\rangle_{\infty}$, where $\langle\cdot,\cdot\rangle_{\infty}$ is defined by

$$\langle x,y\rangle_{\infty}=\sum^{\infty}_{j=1}x_{j}y_{j},\ \ \forall x=(x_{j})^{\infty}_{j=1},y=(y_{j})^{\infty}_{j=1}.$$
  Similarly, we can also define  $\mathcal{H}^{2}$ by
$$\mathcal{H}^{2}=\left\{x=(x_{1},x_{2})\Big{|}x_{1},x_{2}\in\mathbb{R}\left( x_{1}^{2}+ x_{2}^{2}\right)^{\frac{1}{2}}<+\infty\right\},$$
with inner product $\langle\cdot,\cdot\rangle_{2}$, where $\langle\cdot,\cdot\rangle_{2}$ is defined by

$$\langle x,y\rangle_{2}=\sum^{2}_{j=1}x_{j}y_{j},\ \ \forall x=(x_{j})^{2}_{j=1},y=(y_{j})^{2}_{j=1}.$$
It is not difficult to see that both $\left(\mathcal{H}^{\infty},\langle\cdot,\cdot,\rangle_{\infty}\right)$ and
 $\left(\mathcal{H}^{2},\langle\cdot,\cdot,\rangle_{2}\right)$ are Hilbert space. The dual space of $\mathcal{H}^{2}$ is denoted by $\left(\mathcal{H}^{2}\right)^{\ast}$. It is well
known that $\left(\mathcal{H}^{2}\right)^{\ast}$ is isomorphic to $\mathcal{H}^{2}$ itself.
By Lagrange multiplier theorem
for real Banach spaces,  D. Chen, T. Zheng and H.-C. Yang proved the following theorem \cite{CZY}:

\begin{thm} \label{thm-c-z-y}Assume that $\{\mu_{j}\}^{\infty}_{j=1}$ is a nondecreasing sequence, i.e.,
$$0 <\mu_{1}\leq\mu_{2}\leq \cdots\leq\mu_{k}\leq\cdots\nearrow \infty,$$
where each $\mu_{i}$ has finite multiplicity $\mu_{i}$ and is repeated according to its multiplicity.
Define
\begin{equation}\begin{aligned}\label{2-2} &B =
\sum^{\infty}_{j=1}
x^{2}_{j}>0,\\
&A =
\sum^{\infty}_{j=1}
\mu^{2}_{j}x^{2}_{j}, x=(x_{j})^{\infty}_{j=1}\in\mathcal{H}^{\infty}.\end{aligned} \end{equation}
If $$x_{m_{1}}\neq0$$ and $$\sum^{\infty}_{j=1}\mu_{j}x^{2}_{j}<\sqrt{AB},$$ under the conditions in \eqref{2-2}, we have
\begin{equation}\begin{aligned}\label{2-3}
\sum^{\infty}_{j=1}
\mu_{j}x^{2}_{j}\leq \frac{A+\mu_{m_{1}}\mu_{m_{1}+1}B}{
\mu_{m_{1}} + \mu_{m_{1}+1}}.\end{aligned} \end{equation}\end{thm}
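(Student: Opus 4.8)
The plan is to reduce the claimed bound to a single nonnegativity statement and then verify that statement termwise. First I would set $C:=\sum_{j=1}^{\infty}\mu_{j}x_{j}^{2}$ and record that all the series in play converge: the quantity $A$ is finite (this is implicit in the hypothesis $\sum_{j}\mu_{j}x_{j}^{2}<\sqrt{AB}$), $B$ is finite because $x\in\mathcal{H}^{\infty}$, and hence by the Cauchy--Schwarz inequality one has $C\le\sqrt{AB}<\infty$. Since $\mu_{m_{1}}+\mu_{m_{1}+1}>0$, clearing denominators shows that the desired estimate \eqref{2-3} is equivalent to
\[
A-(\mu_{m_{1}}+\mu_{m_{1}+1})\,C+\mu_{m_{1}}\mu_{m_{1}+1}\,B\ \ge\ 0 .
\]

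Next I would identify the left-hand side as a manifestly nonnegative series. Grouping the three convergent sums termwise gives
\[
A-(\mu_{m_{1}}+\mu_{m_{1}+1})\,C+\mu_{m_{1}}\mu_{m_{1}+1}\,B=\sum_{j=1}^{\infty}\bigl(\mu_{j}-\mu_{m_{1}}\bigr)\bigl(\mu_{j}-\mu_{m_{1}+1}\bigr)x_{j}^{2}.
\]
Each summand is nonnegative: because $\{\mu_{j}\}$ is nondecreasing, for $j\le m_{1}$ one has $\mu_{j}\le\mu_{m_{1}}\le\mu_{m_{1}+1}$, so both factors are $\le 0$ and their product is $\ge 0$; while for $j\ge m_{1}+1$ one has $\mu_{j}\ge\mu_{m_{1}+1}\ge\mu_{m_{1}}$, so both factors are $\ge 0$. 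The crucial point is that the two roots of the quadratic $t\mapsto(t-\mu_{m_{1}})(t-\mu_{m_{1}+1})$ are \emph{consecutive} members of the spectrum, so no integer index $j$ can make $\mu_{j}$ fall strictly between them; this is precisely where monotonicity of $\{\mu_{j}\}$ enters. The series is therefore $\ge 0$, which proves \eqref{2-3}.

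The genuinely delicate point is less the inequality than its context, and this is where I expect the main subtlety to lie. The termwise argument above shows that \eqref{2-3} in fact holds for every index $m_{1}$, so the hypotheses $x_{m_{1}}\ne0$ and $\sum_{j}\mu_{j}x_{j}^{2}<\sqrt{AB}$ are not needed for the inequality itself; their real role is the identification of the \emph{sharp} consecutive pair and the exclusion of the degenerate one-point configuration. This matches the Lagrange-multiplier viewpoint alluded to before the statement: fixing $A=\sum_{j}\mu_{j}^{2}x_{j}^{2}$ and $B=\sum_{j}x_{j}^{2}$ and maximizing $C=\sum_{j}\mu_{j}x_{j}^{2}$ is a linear program in the variables $x_{j}^{2}\ge0$, whose extreme points are supported on at most two indices $a,b$; a two-point configuration yields exactly $C=\frac{A+\mu_{a}\mu_{b}B}{\mu_{a}+\mu_{b}}$, and the condition $C<\sqrt{AB}$ rules out the Cauchy--Schwarz equality case in which all mass concentrates at a single index. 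Thus the only analytic care required is justifying the termwise regrouping, which is immediate from the absolute convergence of $A$, $B$, and $C$; beyond that the argument is purely algebraic.
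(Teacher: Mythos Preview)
Your argument is correct. The factorization
\[
A-(\mu_{m_{1}}+\mu_{m_{1}+1})C+\mu_{m_{1}}\mu_{m_{1}+1}B=\sum_{j\ge 1}(\mu_{j}-\mu_{m_{1}})(\mu_{j}-\mu_{m_{1}+1})x_{j}^{2}
\]
together with the monotonicity of $\{\mu_{j}\}$ yields the claim immediately, and your remark that the hypotheses $x_{m_{1}}\neq 0$ and $\sum_{j}\mu_{j}x_{j}^{2}<\sqrt{AB}$ are not needed for the bare inequality \eqref{2-3} is accurate. One small quibble: the strict inequality $\sum_{j}\mu_{j}x_{j}^{2}<\sqrt{AB}$ does not by itself force $A<\infty$ (if $A=\infty$ the right side is $+\infty$ and the hypothesis is vacuous), but in that case the conclusion is trivial anyway, so nothing is lost.

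As for comparison with the paper: the paper does not supply its own proof of this theorem. It is quoted from Chen--Zheng--Yang \cite{CZY} with the comment that it follows from the Lagrange multiplier theorem for real Banach spaces. Your approach is therefore genuinely different and considerably more elementary: you bypass the constrained-optimization machinery entirely by recognizing that the desired inequality is equivalent to the nonnegativity of a quadratic in $\mu_{j}$ whose two roots are consecutive members of the sequence. The Lagrange-multiplier route has the advantage of explaining \emph{why} one lands on the pair $(\mu_{m_{1}},\mu_{m_{1}+1})$ (extremal configurations for $C$ under the constraints $A,B$ fixed are two-point supported, and the side conditions $x_{m_{1}}\neq 0$, $C<\sqrt{AB}$ pin down which pair), whereas your approach shows more: the bound holds for \emph{every} choice of consecutive indices, with the stated hypotheses serving only to single out the relevant one in the later application to eigenvalues.
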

Next, we complete the proof of the general formula by using the  same method as in  D. Chen, T. Zheng and H.-C. Yang \cite{CZY}. For the convenience of readers, we shall give a
self contained proof.\begin{lem}\label{lem2.1}
Let $(M^{n},g,f)$ be an $n$-dimensional complete metric measure space and $\Omega$ a bounded domain
with piecewise smooth boundary $\partial\Omega$ on the Riemannian manifold $M^{n}$.
Assume that $\lambda_{i}$  is  the $i^{\text{th}}$ eigenvalue of the
Dirichlet problem \eqref{Eigenvalue-Problem}  and $u_{i}$ is an
orthonormal eigenfunction corresponding to $\lambda_{i}$, $i =
1,2,\cdots$, such that

\begin{equation*}
\left\{ \begin{aligned}\Delta_{f} u_{i}=-\lambda u_{i},\ \ \ \ \ \ \ &
in\ \ \ \ \Omega,
         \\ u_{i}=0,\ \ \ \ \ \ \ \ \ \ \ \ \ \ &  on\ \ \partial\Omega,
\\
\int_{\Omega}
u_{i}u_{j}e^{-f}dv=\delta_{ij},\ \ & for~any  \ i,j=1,2,\cdots.
                          \end{aligned} \right.
                          \end{equation*}
Then, for any function $h(x)\in C^{3}(\Omega)\cap C^{2}(\overline{\Omega})$ and any
integer $k,i \in \mathbb{Z}^{+},~(k>i\geq1)$,  eigenvalues of the Dirichlet problem \eqref{Eigenvalue-Problem} satisfy

\begin{equation}\begin{aligned}\label{general-formula-1}((\lambda_{k+2}-\lambda_{i})+(\lambda_{k+1}-\lambda_{i}))\|\nabla hu_{i}\|^{2}_{\Omega}+\sum^{k}_{j=1}
(\lambda_{j}-\lambda_{i})^{2}\|hu_{i}\|^{2}_{\Omega}
\leq\|2\langle\nabla h,\nabla u_{i}\rangle+u_{i}\Delta_{f}h\|^{2}_{\Omega},
\end{aligned}\end{equation}
where
\begin{equation*}
\|h(x)\|_{\Omega}^{2} =\int_{\Omega}h^{2}(x)e^{-f}dv.
\end{equation*}
\end{lem}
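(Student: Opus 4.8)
The plan is to follow the scheme of Chen--Zheng--Yang, transplanting every integration by parts into the weighted setting by means of the self-adjointness \eqref{self-adjiont}, and then to feed the resulting Parseval data into the abstract Theorem \ref{thm-c-z-y}. First I would introduce the weighted Fourier coefficients $a_{ij}=\int_{\Omega}hu_iu_j\,e^{-f}dv$, so that $hu_i=\sum_{j}a_{ij}u_j$ in $L^2(e^{-f}dv)$, and record the pointwise product rule $\Delta_f(hu_i)=(\Delta_f h)u_i+2\langle\nabla h,\nabla u_i\rangle+h\Delta_f u_i=\psi_i-\lambda_i hu_i$, where $\psi_i:=2\langle\nabla h,\nabla u_i\rangle+u_i\Delta_f h$ is precisely the function whose norm appears on the right-hand side of \eqref{general-formula-1}.

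Next I would establish the three identities on which everything rests. Pairing $\psi_i$ against $u_j$ and moving $\Delta_f$ across by \eqref{self-adjiont} (the boundary integrals vanish since $u_i=u_j=0$ on $\partial\Omega$) yields the commutator relation $\int_{\Omega}\psi_i u_j\,e^{-f}dv=(\lambda_i-\lambda_j)a_{ij}$. Parseval's identity then gives $\|hu_i\|^2_{\Omega}=\sum_j a_{ij}^2$ and $\|\psi_i\|^2_{\Omega}=\sum_j(\lambda_i-\lambda_j)^2a_{ij}^2$, while one further application of \eqref{self-adjiont} produces $\|\nabla(hu_i)\|^2_{\Omega}=-\int_{\Omega}hu_i\,\Delta_f(hu_i)\,e^{-f}dv=\lambda_i\|hu_i\|^2_{\Omega}-\int_{\Omega}hu_i\psi_i\,e^{-f}dv=\sum_j\lambda_j a_{ij}^2$. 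I would then apply Theorem \ref{thm-c-z-y} with the positive nondecreasing sequence $\mu_j=\lambda_j$, the vector $x=(a_{ij})_j\in\mathcal H^{\infty}$, and $m_1=k+1$, so that $\mu_{m_1}=\lambda_{k+1}$ and $\mu_{m_1+1}=\lambda_{k+2}$. Writing $B=\sum_j a_{ij}^2=\|hu_i\|^2_{\Omega}$ and $A=\sum_j\lambda_j^2a_{ij}^2$, inequality \eqref{2-3} becomes $(\lambda_{k+1}+\lambda_{k+2})\|\nabla(hu_i)\|^2_{\Omega}\le A+\lambda_{k+1}\lambda_{k+2}\|hu_i\|^2_{\Omega}$. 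Completing the square through $A=\|\psi_i\|^2_{\Omega}+2\lambda_i\|\nabla(hu_i)\|^2_{\Omega}-\lambda_i^2\|hu_i\|^2_{\Omega}$ (expand $(\lambda_j-\lambda_i)^2$ inside the three Parseval sums), substituting, and collecting the $\|\nabla(hu_i)\|^2_{\Omega}$ and $\|hu_i\|^2_{\Omega}$ contributions, with the coefficient of the gradient term regrouped as $(\lambda_{k+2}-\lambda_i)+(\lambda_{k+1}-\lambda_i)$ and the leftover lower-order eigenvalue contributions from the first $k$ indices bounded appropriately, would produce \eqref{general-formula-1}.

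The hard part will be verifying the two nondegeneracy hypotheses required to invoke Theorem \ref{thm-c-z-y}: that $x_{m_1}=a_{i,k+1}\neq0$ and that $\sum_j\lambda_j a_{ij}^2<\sqrt{AB}$. The second is the Cauchy--Schwarz inequality in $\mathcal H^{\infty}$ and is strict unless $\lambda_j$ is constant over the indices with $a_{ij}\neq0$, that is, unless $hu_i$ lies in a single eigenspace (which happens, for instance, when $h$ is constant); the first can likewise fail for exceptional $h$. Both degenerate configurations must be disposed of separately, either by a perturbation/density argument on the choice of $h$ or by observing that \eqref{general-formula-1} reduces to a trivial statement in those cases. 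A secondary technical point is justifying the termwise Parseval manipulations and the vanishing of all boundary contributions in the integrations by parts, which is exactly where the regularity hypothesis $h\in C^3(\Omega)\cap C^2(\overline{\Omega})$ together with $u_i|_{\partial\Omega}=0$ is needed.
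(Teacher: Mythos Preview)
Your approach is essentially the paper's: both expand $hu_i$ in the eigenbasis, establish the commutator identity $\int_\Omega\psi_iu_j\,e^{-f}dv=(\lambda_i-\lambda_j)a_{ij}$ via \eqref{self-adjiont}, read off the three Parseval sums, and invoke Theorem~\ref{thm-c-z-y} with $m_1$ corresponding to the index $k+1$. Two small discrepancies are worth flagging. First, the paper does not apply Theorem~\ref{thm-c-z-y} to the full sequence $(a_{ij})_{j\ge1}$ with $\mu_j=\lambda_j$ as you do; instead it first strips off the low modes by setting $\varphi_i=hu_i-\sum_{j\le k}a_{ij}u_j$ and applies the theorem to the \emph{tail} $(a_{ij})_{j\ge k+1}$ with shifted weights $\mu_j=\lambda_j-\lambda_i$, so that $m_1$ is genuinely the first index of the truncated sequence. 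Since the conclusion of Theorem~\ref{thm-c-z-y} is equivalent to $\sum_j(\mu_j-\mu_{m_1})(\mu_j-\mu_{m_1+1})x_j^2\ge0$, which holds termwise for any consecutive pair in a nondecreasing sequence, your direct route is in fact valid and yields the same inequality; the paper's detour through $\varphi_i$ and an auxiliary Rayleigh--Ritz step is not strictly needed. Second, in this paper $\|\nabla hu_i\|_\Omega^2$ denotes $\int_\Omega u_i^2|\nabla h|^2e^{-f}dv=\sum_j(\lambda_j-\lambda_i)a_{ij}^2$ (as confirmed by its use in Corollary~\ref{coro2.2} and Proposition~\ref{prop2.3}), not your $\|\nabla(hu_i)\|^2=\sum_j\lambda_ja_{ij}^2$; the two differ by $\lambda_i\|hu_i\|^2$, and your ``completing the square'' step absorbs exactly this shift, so you end up at the same place.
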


\begin{proof} Since  $u_{j}$ is  an
orthonormal eigenfunction corresponding to the eigenvalue
$\lambda_j$,
 $\{u_{j}\}^{\infty}_{j=1}$ forms an orthonormal basis of
the weighted $L^{2}(\Omega)$. From the Rayleigh-Ritz inequality \cite{Chavel}, we
have
\begin{equation}
\begin{aligned}
\label{2.12} \lambda_{k+1}\leq-\frac{\displaystyle
\int_{\Omega}\varphi\Delta_{f}\varphi e^{-f}dv}{\displaystyle
\int_{\Omega}\varphi^{2}e^{-f}dv},
\end{aligned}
\end{equation}
for any function $\varphi$ satisfies $$\int_{\Omega}\varphi
u_{j}e^{-f}dv=0 ,\ \ 1\leq j\leq k.$$ Putting
\begin{equation*}
a_{ij}=\int_{\Omega}hu_{i}u_{j}e^{-f}dv
\end{equation*}
and
\begin{equation*}
\varphi_{i}=hu_{i}-\sum^{k}_{j=1}a_{ij}u_{j},
\end{equation*}
then, we have
\begin{equation}\label{a}
a_{ij}=a_{ji}.
\end{equation}
By a simple calculation, we find that
\begin{equation}
\begin{aligned}
\label{2.13} \int_{\Omega}\varphi_{i}u_{l}e^{-f}dv&=0,
\end{aligned}
\end{equation}
for $1\leq i, l\leq k.$  \eqref{2.12} implies
\begin{equation*}
\lambda_{k+1}\leq-\frac{\displaystyle
\int_{\Omega}\varphi_{i}\Delta_{f}\varphi_{i}e^{-f}dv}{\displaystyle
\int_{\Omega}\varphi^{2}_{i}e^{-f}dv}.
\end{equation*}
By defining
$$
b_{ij}=-\int_{\Omega}(u_{j}\Delta_{f}h+2\langle\nabla h,\nabla
u_{j}\rangle) u_{i}e^{-f}dv,
$$
we have
\begin{equation}
\label{a-b}b_{ij}=(\lambda_{i}-\lambda_{j})a_{ij}.
\end{equation}
From the Stokes' theorem, we have
\begin{equation}-2\int_{\Omega}hu_{i}\langle\nabla \overline{h},\nabla u_{i}\rangle =-
\int_{\Omega}h\langle\nabla \overline{h},\nabla u^{2}_{i}\rangle
=\int_{\Omega}(\langle\nabla h,\nabla \overline{h}\rangle+ h\Delta_{f} \overline{h})u^{2}_{i}.
\end{equation}
From \eqref{self-adjiont}, \eqref{a} and \eqref{a-b}, we deduce that
\begin{equation}\label{ineq-a-b}
\begin{aligned}
\int_{\Omega}\overline{\varphi_{i}}\Delta _{f}\varphi_{i}e^{-f}dv&=\int_{\Omega}\overline{\varphi_{i}}\Delta
_{f}\Bigg{(}hu_{i}-\sum^{k}_{j=1}a_{ij}u_{j}\Bigg{)}e^{-f}dv
\\&=\int_{\Omega}\overline{\varphi_{i}}\Bigg{(}\Delta _{f}(hu_{i})-\Delta _{f}\left(\sum^{k}_{j=1}a_{ij}u_{j}\right)\Bigg{)}e^{-f}dv
\\&=u_{i}\Delta_{f}h-\lambda_{i}hu_{i}+2\langle\nabla h,\nabla u_{j}\rangle
+\sum^{k}_{j=1}\lambda_{j}a_{ij}u_{j},
\\&=\sum^{\infty}_{j=k+1}\overline{a_{ij}}b_{ij}-\lambda_{i}\sum^{\infty}_{j=k+1}|a_{ij}|^{2}
\\&=\frac{1}{2}\sum^{\infty}_{j=k+1}(\lambda_{i}-\lambda_{j})|a_{ij}|^{2}-\lambda_{i}\sum^{\infty}_{j=k+1}|a_{ij}|^{2}
\end{aligned}
\end{equation}
From the Rayleigh-Ritz inequality (cf. \cite{Chavel}) and \eqref{ineq-a-b}, we have

\begin{equation*}\lambda_{k+1}\leq-\frac{\int_{\Omega}\overline{\varphi_{i}}\Delta\varphi_{i}e^{-f}dv}{\int_{\Omega}|\varphi_{i}|^{2}e^{-f}dv}
=\frac{\sum^{\infty}_{j=k+1}(\lambda_{i}-\lambda_{j})|a_{ij}|^{2}}{\lambda_{i}\sum^{\infty}_{j=k+1}|a_{ij}|^{2}}+\lambda_{i}\end{equation*}
i.e.

\begin{equation}(\lambda_{k+1}-\lambda_{i})
\sum^{\infty}_{j=k+1}|a_{ij}|^{2}\leq\sum^{\infty}_{j=k+1}
(\lambda_{j}-\lambda_{i})|a_{ij}|^{2}.\end{equation}
Utilizing the Cauchy-Schwarz inequality, we yield

\begin{equation*}
\left(\sum^{\infty}_{j=k+1}(\lambda_{j}-\lambda_{i})|a_{ij}|^{2}\right)^{2}\leq\sum^{\infty}_{j=k+1}
(\lambda_{j}-\lambda_{i})^{2}|a_{ij}|^{2}\sum^{\infty}_{j=k+1}
|a_{ij}|^{2},\end{equation*}
which is equivalent to the following:

\begin{equation*}\begin{aligned}
\left(\|\nabla hu_{i}\|^{2}_{\Omega}-\sum^{k}_{j=1}(\lambda_{j}-\lambda_{i})|a_{ij}|^{2}\right)^{2}
&\leq\left(\|hu_{i}\|^{2}_{\Omega}-\sum^{k}_{j=1}
|a_{ij}|^{2}\right)\left(\|2\langle\nabla h,\nabla u_{i}\rangle+u_{i}\Delta_{f}h\|^{2}_{\Omega}-\sum^{k}_{j=1}
(\lambda_{j}-\lambda_{i})^{2}|a_{ij}|^{2}\right).\end{aligned}\end{equation*}
Define

\begin{equation*}\begin{aligned}\mathcal{A}(i)&=\|2\langle\nabla h,\nabla u_{i}\rangle+u_{i}\Delta_{f}h\|^{2}_{\Omega}-\sum^{k}_{j=1}
(\lambda_{j}-\lambda_{i})^{2}|a_{ij}|^{2}\\& =\sum^{\infty}_{j=k+1}
(\lambda_{j}-\lambda_{i})^{2}|a_{ij}|^{2}\geq0;\end{aligned}\end{equation*}

$$\mathcal{B}(i)=\|hu_{i}\|^{2}_{\Omega}-\sum^{k}_{j=1}
|a_{ij}|^{2} =\sum^{\infty}_{j=k+1}|a_{ij}|^{2},\ \ {\rm here }\ \ \int_{\Omega}hu_{i}u_{k+1}e^{-f}dv\neq0;$$and

$$\mathcal{C}(i)=\|\nabla hu_{i}\|^{2}_{\Omega}
-\sum^{k}_{j=1}(\lambda_{j}-\lambda_{i})|a_{ij}|^{2}=\sum^{\infty}_{j=k+1}(\lambda_{j}-\lambda_{i})|a_{ij}|^{2}.$$
Since $hu_{i}$ is not the $\mathbb{C}$-linear combination of
$u_{1},\cdots,u_{k+1},$
there exists some $l > k+1$ such that
$$a_{l}=\int_{\Omega}hu_{i}u_{l}e^{-f}dv\neq0.$$
It is not difficult to see that
$$\lambda_{i} < \lambda_{k+1} < \lambda_{k+2}\leq\lambda_{l},$$ therefore,
the vector

$$\left(|a_{ij}|\right)^{\infty}_{j=k+1}$$
is not proportional to
$$\left((\lambda _{j}-\lambda _{i})^{2}|a_{ij}|\right)^{\infty}_{j=k+1}.$$
From the Cauchy-Schwarz inequality, we have

\begin{equation}
\label{(3-10)} \mathcal{C}(i)<\sqrt{\mathcal{A}(i)\mathcal{B}(i)}\end{equation}
Since $a_{k+1}\neq0$, from \eqref{(3-10)} and  theorem \ref{thm-c-z-y}, we obtain

\begin{equation}\label{C}\mathcal{C}(i)\leq\frac{\mathcal{A}(i)+(\lambda_{k+2}-\lambda_{i})(\lambda_{k+1}-\lambda_{i})\mathcal{B}(i)}
{(\lambda_{k+2}-\lambda_{i})-(\lambda_{k+1}-\lambda_{i})}\end{equation}
From \eqref{C}, and the definition of $\mathcal{A}(i)$, $\mathcal{B}(i)$ and $\mathcal{C}(i)$, one can infer that

\begin{equation*}\begin{aligned}&((\lambda_{k+2}-\lambda_{i})+(\lambda_{k+1}-\lambda_{i}))\|\nabla hu_{i}\|^{2}_{\Omega}
\\&\leq\|2\langle\nabla h,\nabla u_{i}\rangle+u_{i}\Delta_{f}h\|^{2}_{\Omega}-\sum^{k}_{j=1}
(\lambda_{j}-\lambda_{i})^{2}\|hu_{i}\|^{2}_{\Omega}
-(\lambda_{k+2}-\lambda_{j})(\lambda_{k+1}-\lambda_{j})|a_{ij}|^{2}
\\&\leq\|2\langle\nabla h,\nabla u_{i}\rangle+u_{i}\Delta_{f}h\|^{2}_{\Omega}-\sum^{k}_{j=1}
(\lambda_{j}-\lambda_{i})^{2}\|hu_{i}\|^{2}_{\Omega}.
\end{aligned}\end{equation*}
Therefore, we complete the proof of this lemma.\end{proof}

\begin{corr}\label{coro2.2}Under the assumption of the lemma \ref{lem2.1}, for any real value function $F\in C^{3}(\Omega)\cap C^{2}(\overline{\Omega}),$  we have
\begin{equation}\begin{aligned}\label{general-formula-2}((\lambda_{k+2}-\lambda_{i})+(\lambda_{k+1}-\lambda_{i}))\|\nabla Fu_{i}\|^{2}_{\Omega}
&\leq2\sqrt{(\lambda_{k+2}-\lambda_{i})(\lambda_{k+1}-\lambda_{i})\||\nabla F|^{2}u_{i}\|^{2}_{\Omega}}
\\&\ \ \ \ \ \ \ +\|2\langle\nabla F,\nabla u_{i}\rangle+u_{i}\Delta_{f}F\|^{2}_{\Omega}.
\end{aligned}\end{equation}
\end{corr}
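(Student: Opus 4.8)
The plan is to feed the general formula \eqref{general-formula-1} a one-parameter family of unit-modulus trial functions and then optimise over the parameter; the square root appearing in \eqref{general-formula-2} is the tell-tale sign of exactly such an optimisation. For a real parameter $\alpha>0$ I would apply Lemma \ref{lem2.1} with the complex trial function $h=e^{\sqrt{-1}\,\alpha F}$. (To remain inside the real class $C^{3}(\Omega)\cap C^{2}(\overline{\Omega})$ one may instead add the two instances of the lemma obtained from $h=\cos(\alpha F)$ and $h=\sin(\alpha F)$; their cross terms cancel and reproduce the same identities.) The reason for choosing $|h|\equiv1$ is that $\|hu_{i}\|_{\Omega}^{2}=\|u_{i}\|_{\Omega}^{2}=1$, so the lowest-order term of the formula degenerates to a pure constant.

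Next I would compute the three ingredients from $\nabla h=\sqrt{-1}\,\alpha\,h\,\nabla F$ and $\Delta_{f}h=h\big(\sqrt{-1}\,\alpha\,\Delta_{f}F-\alpha^{2}|\nabla F|^{2}\big)$. Since $|h|\equiv1$, the gradient term of the formula produces $\alpha^{2}\|\nabla Fu_{i}\|_{\Omega}^{2}$ (the mixed term is purely imaginary and integrates to zero), while decomposing $2\langle\nabla h,\nabla u_{i}\rangle+u_{i}\Delta_{f}h$ into its real and imaginary parts gives
\[
\|2\langle\nabla h,\nabla u_{i}\rangle+u_{i}\Delta_{f}h\|_{\Omega}^{2}
=\alpha^{4}\big\||\nabla F|^{2}u_{i}\big\|_{\Omega}^{2}
+\alpha^{2}\|2\langle\nabla F,\nabla u_{i}\rangle+u_{i}\Delta_{f}F\|_{\Omega}^{2}.
\]
Substituting these into \eqref{general-formula-1}, and retaining the contribution carried by $\|hu_{i}\|_{\Omega}^{2}=1$ (the piece that supplies the product $(\lambda_{k+2}-\lambda_{i})(\lambda_{k+1}-\lambda_{i})$), I obtain after dividing through by $\alpha^{2}$ an inequality of the form
\[
\big((\lambda_{k+2}-\lambda_{i})+(\lambda_{k+1}-\lambda_{i})\big)\|\nabla Fu_{i}\|_{\Omega}^{2}
\leq \alpha^{2}\big\||\nabla F|^{2}u_{i}\big\|_{\Omega}^{2}
+\frac{(\lambda_{k+2}-\lambda_{i})(\lambda_{k+1}-\lambda_{i})}{\alpha^{2}}
+\|2\langle\nabla F,\nabla u_{i}\rangle+u_{i}\Delta_{f}F\|_{\Omega}^{2},
\]
valid for every admissible $\alpha>0$.

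The last step is the optimisation in $\alpha$. The two $\alpha$-dependent summands have the shape $\alpha^{2}a+b/\alpha^{2}$ with $a=\||\nabla F|^{2}u_{i}\|_{\Omega}^{2}$ and $b=(\lambda_{k+2}-\lambda_{i})(\lambda_{k+1}-\lambda_{i})$, so by the arithmetic-geometric mean inequality their infimum over $\alpha>0$ is $2\sqrt{ab}$, attained at $\alpha^{2}=\sqrt{b/a}$. This infimum is precisely the right-hand side of \eqref{general-formula-2}, which finishes the proof; if $a=0$ then $\nabla F\equiv0$ on the support of $u_{i}$ and the estimate is trivial.

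I expect the crux to lie in the second paragraph: one must verify that the unit-modulus choice makes the $\|hu_{i}\|_{\Omega}^{2}$-term collapse to the constant $(\lambda_{k+2}-\lambda_{i})(\lambda_{k+1}-\lambda_{i})$ rather than to some $F$-dependent integral, for it is exactly this term that, after the $1/\alpha^{2}$ rescaling, pairs with $\alpha^{2}\||\nabla F|^{2}u_{i}\|_{\Omega}^{2}$ to manufacture the geometric mean. A secondary technical point is admissibility: the hypotheses of Lemma \ref{lem2.1} (that $hu_{i}$ is not a finite linear combination of $u_{1},\dots,u_{k+1}$ and that $\int_{\Omega}hu_{i}u_{k+1}e^{-f}\,dv\neq0$) hold only for generic $\alpha$, so I would run the computation for such $\alpha$ and pass to the optimised inequality by continuity.
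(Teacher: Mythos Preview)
Your proposal is correct and follows essentially the same route as the paper: the paper also substitutes $h=e^{\sqrt{-1}\,\alpha F}$ (written there with parameter $\eta$) into Lemma~\ref{lem2.1}, obtains the same one-parameter inequality after dividing by $\alpha^{2}$, and then optimises via the arithmetic--geometric mean (which the paper labels ``Cauchy--Schwarz''). Your explicit remarks on the real $\cos/\sin$ variant, the degenerate case $a=0$, and the generic-$\alpha$/continuity issue for admissibility are details the paper passes over in silence.
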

\begin{proof}
Taking $h=e^{i\alpha F}$, $F\in\mathbb{R}\backslash\{0\}$ in
\eqref{general-formula-1}, we obtain

\begin{equation}\begin{aligned}\label{lk2-li1}\eta^{2}((\lambda_{k+2}-\lambda_{i})+(\lambda_{k+1}-\lambda_{i}))\|\nabla Fu_{i}\|^{2}_{\Omega}
&\leq\eta^{4}\||\nabla F|^{2}u_{i}\|^{2}_{\Omega}+\eta^{2}\|2\langle\nabla F,\nabla u_{i}\rangle\\&\ \ \ \ \ \ \ +u_{i}\Delta_{f}F\|^{2}_{\Omega}+(\lambda_{k+2}-\lambda_{i})(\lambda_{k+1}-\lambda_{i}).
\end{aligned}\end{equation}
From \eqref{lk2-li1}, we deduce

\begin{equation}\begin{aligned}\label{lk2-l12}((\lambda_{k+2}-\lambda_{i})+(\lambda_{k+1}-\lambda_{i}))\|\nabla Fu_{i}\|^{2}_{\Omega}
&\leq\eta^{2}\||\nabla F|^{2}u_{i}\|^{2}_{\Omega}+\|2\langle\nabla F,\nabla u_{i}\rangle+u_{i}\Delta_{f}F\|^{2}_{\Omega}\\&\ \ \ \ \ \ \ +\frac{1}{\eta^{2}}(\lambda_{k+2}-\lambda_{i})(\lambda_{k+1}-\lambda_{i}).
\end{aligned}\end{equation}
Using the Cauchy-Schwarz inequality in \eqref{lk2-l12}, we yield \eqref{general-formula-2}. This finishes the proof of this lemma.
\end{proof}

By utilizing corollary \ref{coro2.2}, we have

\begin{prop}\label{prop2.3}Let $\tau$ be a constant such that, for any $i=1,2,\cdots,k,$ $\lambda_{i}+\tau>0$.
Under the assumption of the lemma \ref{lem2.1}, for any $j=1,2,\cdots,l,$ and any real value function $F_{j}\in C^{3}(\Omega)\cap C^{2}(\overline{\Omega})$,  we have

\begin{equation}\begin{aligned}\label{general-formula-2}\sum_{j=1}^{l}\frac{a_{j}^{2}+b_{j}}{2}\left(\lambda_{k+2}-\lambda_{k+1}\right)^{2}
\leq4(\lambda_{k+2}+\tau)\sum_{j=1}^{l}\|2\langle\nabla F_{j},\nabla u_{i}\rangle+u_{i}\Delta_{f}F_{j}\|_{\Omega}^{2}
,
\end{aligned}\end{equation}
where

\begin{equation*}a_{j}=\sqrt{\|\nabla F_{j}u_{i}\|_{\Omega}^{2}},\end{equation*}

\begin{equation*}b_{j}=\sqrt{\||\nabla F_{j}|^{2}u_{i}\|_{\Omega}^{2}},\end{equation*}
\begin{equation}a_{j}^{2}\geq b_{j},\end{equation}  and \begin{equation*}
\|F(x)\|=\int_{\Omega}F(x)e^{-f}dv.
\end{equation*}

\end{prop}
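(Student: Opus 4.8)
The plan is to obtain \eqref{general-formula-2} by applying Corollary \ref{coro2.2} separately to each of the functions $F_{1},\dots,F_{l}$ and then summing the resulting $l$ inequalities, with the stated relation $a_{j}^{2}\geq b_{j}$ serving as the decisive algebraic input. Throughout I keep the index $i$ fixed and abbreviate $p=\lambda_{k+2}-\lambda_{i}$ and $q=\lambda_{k+1}-\lambda_{i}$, so that $p\geq q>0$ and $p-q=\lambda_{k+2}-\lambda_{k+1}$. Writing $T_{j}=\|2\langle\nabla F_{j},\nabla u_{i}\rangle+u_{i}\Delta_{f}F_{j}\|_{\Omega}^{2}$ and recalling the definitions of $a_{j},b_{j}$ from the statement, Corollary \ref{coro2.2} applied to $F=F_{j}$ reads
$$(p+q)\,a_{j}^{2}\leq 2\sqrt{pq}\,b_{j}+T_{j},\qquad j=1,\dots,l.$$

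Next I would rearrange this as $T_{j}\geq(p+q)a_{j}^{2}-2\sqrt{pq}\,b_{j}$ and use the hypothesis $b_{j}\leq a_{j}^{2}$ together with $\sqrt{pq}\geq0$ to bound the right-hand side from below by a perfect square:
$$(p+q)a_{j}^{2}-2\sqrt{pq}\,b_{j}\geq (p+q)a_{j}^{2}-2\sqrt{pq}\,a_{j}^{2}=(\sqrt{p}-\sqrt{q})^{2}a_{j}^{2}.$$
Writing $(\sqrt{p}-\sqrt{q})^{2}=(p-q)^{2}/(\sqrt{p}+\sqrt{q})^{2}$ and estimating the denominator by the arithmetic–geometric mean inequality, $(\sqrt{p}+\sqrt{q})^{2}=p+q+2\sqrt{pq}\leq2(p+q)\leq4(\lambda_{k+2}-\lambda_{i})$, and then invoking $\lambda_{i}+\tau>0$ (so that $\lambda_{k+2}-\lambda_{i}\leq\lambda_{k+2}+\tau$), I arrive at
$$a_{j}^{2}\,(\lambda_{k+2}-\lambda_{k+1})^{2}\leq4(\lambda_{k+2}+\tau)\,T_{j}.$$

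To finish, I would use $a_{j}^{2}\geq b_{j}$ a second time, now in the form $\tfrac12(a_{j}^{2}+b_{j})\leq a_{j}^{2}$, which upgrades the last display to
$$\frac{a_{j}^{2}+b_{j}}{2}(\lambda_{k+2}-\lambda_{k+1})^{2}\leq4(\lambda_{k+2}+\tau)\,T_{j},$$
and then sum over $j=1,\dots,l$, noting that both the gap factor $(\lambda_{k+2}-\lambda_{k+1})^{2}$ and the coefficient $4(\lambda_{k+2}+\tau)$ are independent of $j$. This is exactly the asserted inequality of the proposition.

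The two estimates on $(\sqrt{p}\pm\sqrt{q})^{2}$ are routine; the genuinely load-bearing step is the twofold use of $a_{j}^{2}\geq b_{j}$, first to replace $-2\sqrt{pq}\,b_{j}$ by $-2\sqrt{pq}\,a_{j}^{2}$ and extract the perfect square $(\sqrt{p}-\sqrt{q})^{2}a_{j}^{2}$, and then to pass from $a_{j}^{2}$ to the symmetric quantity $\tfrac12(a_{j}^{2}+b_{j})$. I expect this to be the main point to watch: if one only had the reverse bound $a_{j}^{2}\leq b_{j}$, the rearranged quantity $(p+q)a_{j}^{2}-2\sqrt{pq}\,b_{j}$ could turn negative and the per-term lower bound would collapse, so the validity of the argument hinges on $a_{j}^{2}\geq b_{j}$ holding for the functions $F_{j}$ at hand. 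The parameter $\tau$ plays only the auxiliary role of furnishing a clean, $i$-independent majorant for $\lambda_{k+2}-\lambda_{i}$, which is what allows the summation over $j$ to be absorbed into a single common constant.
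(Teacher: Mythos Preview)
Your argument is correct and follows essentially the same route as the paper: apply Corollary~\ref{coro2.2} to each $F_{j}$, use $a_{j}^{2}\geq b_{j}$ to extract the factor $(\sqrt{p}-\sqrt{q})^{2}$, bound $(\sqrt{p}+\sqrt{q})^{2}\leq 4(\lambda_{k+2}+\tau)$, and sum over $j$. The only cosmetic difference is that the paper reaches $\tfrac{a_{j}^{2}+b_{j}}{2}(\sqrt{p}-\sqrt{q})^{2}\leq T_{j}$ in one stroke via the identity $a_{j}^{2}(p+q)-2b_{j}\sqrt{pq}-\tfrac{a_{j}^{2}+b_{j}}{2}(\sqrt{p}-\sqrt{q})^{2}=\tfrac{a_{j}^{2}-b_{j}}{2}(\sqrt{p}+\sqrt{q})^{2}\geq 0$, whereas you invoke $a_{j}^{2}\geq b_{j}$ twice to pass through the (slightly stronger) intermediate bound $a_{j}^{2}(\sqrt{p}-\sqrt{q})^{2}\leq T_{j}$.
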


\begin{proof}By the assumption, we have

\begin{equation*}\begin{aligned}\frac{a_{j}^{2}-b_{j}}{2}
\left(\sqrt{\lambda_{k+2}-\lambda_{i}}+\sqrt{\lambda_{k+1}-\lambda_{i}}\right)^{2}\geq0\end{aligned}\end{equation*}
which is equivalent to the following:

\begin{equation}\begin{aligned}\label{gap-ineq}&a_{j}^{2}((\lambda_{k+2}-\lambda_{i})+(\lambda_{k+1}-\lambda_{i}))
-2b_{j}\sqrt{(\lambda_{k+2}-\lambda_{i})(\lambda_{k+1}-\lambda_{i})}\\&\geq\frac{a_{j}^{2}+b_{j}}{2}
\left(\sqrt{\lambda_{k+2}-\lambda_{i}}-\sqrt{\lambda_{k+1}-\lambda_{i}}\right)^{2}.\end{aligned}\end{equation}
By \eqref{gap-ineq} and \eqref{general-formula-1}, we have
\begin{equation*}\begin{aligned}\frac{a_{j}^{2}+b_{j}}{2}
\left(\sqrt{\lambda_{k+2}-\lambda_{i}}-\sqrt{\lambda_{k+1}-\lambda_{i}}\right)^{2}
&\leq\|2\langle\nabla h_{j},\nabla u_{i}\rangle+u_{i}\Delta h_{j}\|_{\Omega}^{2}.
\end{aligned}\end{equation*}
Taking sum over $j$ from $1$ to $l$, we yield

\begin{equation}\begin{aligned}\label{gen-for-4}\sum_{j=1}^{l}\frac{a_{j}^{2}+b_{j}}{2}\left(\sqrt{\lambda_{k+2}-\lambda_{i}}-\sqrt{\lambda_{k+1}-\lambda_{i}}\right)^{2}
 \leq\sum_{j=1}^{l}\|2\langle\nabla F_{j},\nabla u_{i}\rangle+u_{i}\Delta_{f} F_{j}\|_{\Omega}^{2}.
\end{aligned}\end{equation}
Multiplying \eqref{gen-for-4} by
$\left(\sqrt{\lambda_{k+2}-\lambda_{i}}+\sqrt{\lambda_{k+1}-\lambda_{i}}\right)^{2}$ on both sides, one can infer that

\begin{equation*}\begin{aligned}\sum_{j=1}^{l}\frac{a_{j}^{2}+b_{j}}{2}\left(\lambda_{k+2}-\lambda_{k+1}\right)^{2}
&\leq\sum_{j=1}^{l}\|2\langle\nabla F_{j},\nabla u_{i}\rangle+u_{i}\Delta F_{j}\|_{\Omega}^{2}\left(\sqrt{\lambda_{k+2}-\lambda_{i}}+\sqrt{\lambda_{k+1}-\lambda_{i}}\right)^{2}\\
&=\sum_{j=1}^{l}\|2\langle\nabla F_{j},\nabla u_{i}\rangle+u_{i}\Delta F_{j}\|_{\Omega}^{2}\\&\times\left(\sqrt{(\lambda_{k+2}+\tau)
-(\lambda_{i}+\tau)}+\sqrt{(\lambda_{k+1}+\tau)-(\lambda_{i}+\tau)}\right)^{2}\\
&\leq4(\lambda_{k+2}+\tau)\sum_{j=1}^{l}\|2\langle\nabla F_{j},\nabla u_{i}\rangle+u_{i}\Delta F_{j}\|_{\Omega}^{2}
.
\end{aligned}\end{equation*}
which is the inequality \eqref{general-formula-2}.
Therefore, we finish the proof of this proposition.

\end{proof}

By the same method as the proof of proposition \ref{prop2.3}, one can prove the following proposition if one notices to count the number of eigenvalues from $0$.

\begin{prop}\label{gen-for-2}Let $(M^{n},g)$ be an $n$-dimensional closed metric measure space.
Assume that $\overline{\lambda}_{i}$  is  the $i^{\text{th}}$ eigenvalue of the
eigenvalue problem \eqref{Eigen-Prob-closed}  and $u_{i}$ is an
orthonormal eigenfunction corresponding to $\overline{\lambda}_{i}$, $i =0,
1,2,\cdots$, such that
\begin{equation*}
\left\{ \begin{aligned}\Delta_{f} u_{i}=-\lambda u_{i},\ \ \ \ \ \ \ &
in\ \ \ \ M^{n},
\\
\int_{M^{n}}
u_{i}u_{j}e^{-f}dv=\delta_{ij},\ \ & for~any  \ i,j=0,1,2,\cdots.
                          \end{aligned} \right.
                          \end{equation*}Let $\tau$ be a constant such that, for any $i=0,1,2,\cdots,k,$ $\overline{\lambda}_{i}+\tau>0$.
Then, for any $j=0,1,2,\cdots,l,$ and any real value function $h_{j}\in  C^{2}(M^{n})$,  we have

\begin{equation}\begin{aligned}\label{general-formula-4}\sum_{j=1}^{l}\frac{a_{j}^{2}+b_{j}}{2}\left(\overline{\lambda}_{k+2}-\overline{\lambda}_{k+1}\right)^{2}
\leq4(\overline{\lambda}_{k+2}+\tau)\sum_{j=1}^{l}\|2\langle\nabla F_{j},\nabla u_{i}\rangle+u_{i}\Delta_{f}F_{j}\|_{M^{n}}^{2}
,
\end{aligned}\end{equation}
where

\begin{equation*}a_{j}=\sqrt{\|\nabla F_{j}u_{i}\|_{M^{n}}^{2}},\end{equation*}

\begin{equation*}b_{j}=\sqrt{\||\nabla F_{j}|^{2}u_{i}\|_{M^{n}}^{2}},\end{equation*} $$\|h\|^{2}\int_{M^{n}}h^{2}e^{-f}dv,$$ and
\begin{equation*}a_{j}^{2}\geq b_{j}.
\end{equation*}

\end{prop}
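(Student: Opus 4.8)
The plan is to carry the three-step development used for the Dirichlet problem—Lemma \ref{lem2.1}, Corollary \ref{coro2.2}, and Proposition \ref{prop2.3}—over to the closed setting essentially verbatim, the only genuine change being that the spectral expansion now runs from the index $0$ rather than $1$. Since $M^{n}$ is closed, the family $\{u_{i}\}_{i=0}^{\infty}$ is a complete orthonormal basis of the weighted space $L^{2}(M^{n},e^{-f}dv)$, with $u_{0}$ the constant eigenfunction attached to $\overline{\lambda}_{0}=0$, and the Rayleigh--Ritz inequality reads $\overline{\lambda}_{k+1}\leq -\big(\int_{M^{n}}\varphi\Delta_{f}\varphi\,e^{-f}dv\big)\big/\big(\int_{M^{n}}\varphi^{2}e^{-f}dv\big)$ for every trial function $\varphi$ orthogonal to $u_{0},\dots,u_{k}$.

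First I would establish the closed analogue of Lemma \ref{lem2.1}. Setting $a_{ij}=\int_{M^{n}}F u_{i}u_{j}e^{-f}dv$ and $\varphi_{i}=F u_{i}-\sum_{j=0}^{k}a_{ij}u_{j}$, one checks that $\varphi_{i}$ is orthogonal to $u_{0},\dots,u_{k}$, so it is an admissible trial function for $\overline{\lambda}_{k+1}$. Because $M^{n}$ has no boundary, the self-adjointness relation \eqref{self-adjiont} and Stokes' theorem hold with no boundary contributions, so the integration-by-parts computations of Lemma \ref{lem2.1} go through unchanged and in fact simplify; in particular the commutator identity \eqref{a-b}, namely $b_{ij}=(\overline{\lambda}_{i}-\overline{\lambda}_{j})a_{ij}$, is unaffected. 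Feeding the resulting estimate into Theorem \ref{thm-c-z-y}, applied to the nondecreasing sequence $\mu_{j}=\overline{\lambda}_{j}$ (each repeated to its multiplicity), then yields the closed counterpart of \eqref{general-formula-1}, with every interior sum $\sum_{j=1}^{k}$ replaced by $\sum_{j=0}^{k}$ and all norms taken over $M^{n}$.

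Next I would reproduce Corollary \ref{coro2.2} by inserting the complex trial function $h=e^{\sqrt{-1}\,\alpha F_{j}}$, expanding in powers of the real parameter $\alpha$, and optimizing to obtain the closed form of the corollary's inequality. Then, exactly as in Proposition \ref{prop2.3}, I would invoke the relation $a_{j}^{2}\geq b_{j}$ recorded in the statement to form the nonnegative quantity $\tfrac{a_{j}^{2}-b_{j}}{2}\big(\sqrt{\overline{\lambda}_{k+2}-\overline{\lambda}_{i}}+\sqrt{\overline{\lambda}_{k+1}-\overline{\lambda}_{i}}\big)^{2}$, rearrange to isolate $\tfrac{a_{j}^{2}+b_{j}}{2}\big(\sqrt{\overline{\lambda}_{k+2}-\overline{\lambda}_{i}}-\sqrt{\overline{\lambda}_{k+1}-\overline{\lambda}_{i}}\big)^{2}$, sum over $j=1,\dots,l$, multiply through by $\big(\sqrt{\overline{\lambda}_{k+2}-\overline{\lambda}_{i}}+\sqrt{\overline{\lambda}_{k+1}-\overline{\lambda}_{i}}\big)^{2}$, and finally use the hypothesis $\overline{\lambda}_{i}+\tau>0$ to pass to the shifted eigenvalues, bounding the factor $\big(\sqrt{(\overline{\lambda}_{k+2}+\tau)-(\overline{\lambda}_{i}+\tau)}+\sqrt{(\overline{\lambda}_{k+1}+\tau)-(\overline{\lambda}_{i}+\tau)}\big)^{2}$ by $4(\overline{\lambda}_{k+2}+\tau)$. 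This is precisely \eqref{general-formula-4}.

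The main point requiring care—rather than a genuine obstacle—is the bookkeeping of the index shift: the trial function must be rendered orthogonal to the full block $u_{0},\dots,u_{k}$, including the constant mode $u_{0}$, so that Rayleigh--Ritz controls $\overline{\lambda}_{k+1}$ and the interior sums run from $j=0$. The condition $\overline{\lambda}_{i}+\tau>0$ for $0\leq i\leq k$ is exactly what keeps every square root real and legitimizes the final $\tau$-shift, while the closedness of $M^{n}$ removes the boundary integrals that had to be tracked in the Dirichlet argument, so no new analytic difficulty is introduced.
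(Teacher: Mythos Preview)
Your proposal is correct and follows exactly the approach the paper itself prescribes: the paper's proof of this proposition consists solely of the remark that one applies the same method as in Proposition~\ref{prop2.3} (hence also Lemma~\ref{lem2.1} and Corollary~\ref{coro2.2}), taking care to count eigenvalues from $0$ rather than $1$. Your identification of the index shift and the role of the hypothesis $\overline{\lambda}_{i}+\tau>0$ matches the paper precisely.
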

Next, we state the general formula given by C. Xia and H. Xu in \cite{XX}, which will be used in the next section.
\begin{prop}\label{thm-x-x}
Let $(M^{n},g)$ be an $n$-dimensional complete metric measure space.
Assume that $\lambda_{i}$  is  the $i^{\text{th}}$ eigenvalue of the Dirichlet problem \eqref{Eigenvalue-Problem}  and $u_{i}$ is an
orthonormal eigenfunction corresponding to $\lambda_{i}$, $i =1,2,\cdots$, such that
\begin{equation*}
\Delta_{f}u_{i} =-\lambda_{i}u_{i}\ \  and\ \ \int_{\Omega}
u_{i}u_{j}e^{-f}dv=\delta_{ij},\ \
 \text{\rm for any} \ i,j=1,2,\cdots.
\end{equation*}
Then, for any function $h(x)\in C^{2}(\Omega)$ and any positive
integer $k$,  eigenvalues of the Dirichlet problem \eqref{Eigenvalue-Problem} satisfy
\begin{equation}
\begin{aligned}
\label{gen-for}
\sum^{k}_{i=1}(\lambda_{k+1}-\lambda_{i})^{2}\|u_{i}\nabla
h\|_{\Omega}^{2} \leq\sum^{k}_{i=1}(\lambda_{k+1}-\lambda_{i})
\|2\langle\nabla h,~\nabla u_{i}\rangle+u_{i}\Delta
_{f}h\|_{\Omega}^{2}.
\end{aligned}
\end{equation}
\end{prop}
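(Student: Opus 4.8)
The plan is to prove \eqref{gen-for} by expanding $hu_{i}$ in the weighted $L^{2}$ eigenbasis and reducing the asserted inequality to a manifestly nonnegative sum. First I would set $a_{ij}=\int_{\Omega}hu_{i}u_{j}e^{-f}dv$, so that $a_{ij}=a_{ji}$ and $hu_{i}=\sum_{j=1}^{\infty}a_{ij}u_{j}$ in $L^{2}(\Omega,e^{-f}dv)$; this is legitimate because $\Omega$ is bounded with piecewise smooth boundary, so the spectrum is discrete and the $\{u_{j}\}$ are complete. Writing $\psi_{i}:=2\langle\nabla h,\nabla u_{i}\rangle+u_{i}\Delta_{f}h=\Delta_{f}(hu_{i})+\lambda_{i}hu_{i}$ and using $\Delta_{f}u_{j}=-\lambda_{j}u_{j}$, one obtains the Fourier expansion $\psi_{i}=\sum_{j=1}^{\infty}(\lambda_{i}-\lambda_{j})a_{ij}u_{j}$, whence by Parseval $\|2\langle\nabla h,\nabla u_{i}\rangle+u_{i}\Delta_{f}h\|_{\Omega}^{2}=\sum_{j=1}^{\infty}(\lambda_{j}-\lambda_{i})^{2}a_{ij}^{2}$.

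Next I would establish the companion identity $\|u_{i}\nabla h\|_{\Omega}^{2}=\sum_{j=1}^{\infty}(\lambda_{j}-\lambda_{i})a_{ij}^{2}$. This follows by computing $\int_{\Omega}(hu_{i})\psi_{i}e^{-f}dv$ in two ways: expanding in the basis gives $\sum_{j}(\lambda_{i}-\lambda_{j})a_{ij}^{2}$, while a direct integration by parts with the weight $e^{-f}$, using the self-adjointness relation \eqref{self-adjiont} together with $\Delta_{f}(\tfrac{1}{2}h^{2})=h\Delta_{f}h+|\nabla h|^{2}$, gives $-\|u_{i}\nabla h\|_{\Omega}^{2}$. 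Equating the two expressions yields the identity. The only analytic care needed here is that $hu_{i}$ vanishes on $\partial\Omega$, so no boundary terms intervene.

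With these two identities in hand, the difference of the right- and left-hand sides of \eqref{gen-for} becomes $\sum_{i=1}^{k}\sum_{j=1}^{\infty}D_{ij}a_{ij}^{2}$, where $D_{ij}=(\lambda_{k+1}-\lambda_{i})(\lambda_{j}-\lambda_{i})(\lambda_{j}-\lambda_{k+1})$. I would then split the inner sum at $j=k$. For $1\le i,j\le k$ the symmetry $a_{ij}=a_{ji}$ lets me pair the term $(i,j)$ with $(j,i)$, and a short computation shows $D_{ij}+D_{ji}=0$ (the diagonal term $D_{ii}$ vanishes as well), so this entire block contributes zero. For $i\le k<j$ one has $\lambda_{i}\le\lambda_{k+1}\le\lambda_{j}$, hence all three factors of $D_{ij}$ are nonnegative and the surviving terms satisfy $D_{ij}a_{ij}^{2}\ge 0$. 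Thus the difference equals the nonnegative tail $\sum_{i=1}^{k}\sum_{j\geq k+1}D_{ij}a_{ij}^{2}\ge 0$, which is exactly \eqref{gen-for}.

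The computation is almost entirely algebraic once the two spectral identities are secured, and the pairwise cancellation is the conceptual heart of the argument: the main obstacle is recognizing that the seemingly delicate difference collapses to a nonnegative tail after exploiting the symmetry of $(a_{ij})$ and the eigenvalue ordering. I would emphasize that this route uses no Rayleigh-Ritz variational bound at all; the only genuinely analytic inputs are the completeness of the eigenbasis on the bounded domain and the weighted integration-by-parts identity computing $\|u_{i}\nabla h\|_{\Omega}^{2}$.
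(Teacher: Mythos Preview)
Your argument is correct. The two spectral identities are right (the computation of $\|u_{i}\nabla h\|_{\Omega}^{2}$ via $\Delta_{f}(\tfrac12 h^{2})=h\Delta_{f}h+|\nabla h|^{2}$ and the Parseval expansion of $\psi_{i}$ both check out), and the antisymmetry $D_{ij}+D_{ji}=0$ together with the nonnegativity of each factor when $j\ge k+1$ indeed makes the difference a nonnegative tail.

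However, your route is genuinely different from the one the paper relies on. The paper does not prove this proposition itself; it attributes it to Xia--Xu \cite{XX} and, in the surrounding Lemma \ref{lem2.1}, follows the standard Yang-type strategy: form the trial functions $\varphi_{i}=hu_{i}-\sum_{j=1}^{k}a_{ij}u_{j}$, apply the Rayleigh--Ritz inequality to get $(\lambda_{k+1}-\lambda_{i})\|\varphi_{i}\|_{\Omega}^{2}\le -\int_{\Omega}\varphi_{i}\Delta_{f}\varphi_{i}e^{-f}dv$, and then combine with a Cauchy--Schwarz step. Your method bypasses Rayleigh--Ritz entirely, replacing the variational bound by an exact algebraic decomposition of the difference and a pairwise cancellation over the block $1\le i,j\le k$. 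What this buys you is transparency and equality information: you see directly that \eqref{gen-for} holds with equality precisely when $a_{ij}=0$ for all $i\le k<j$ with $\lambda_{i}<\lambda_{k+1}<\lambda_{j}$. What the trial-function route buys is flexibility: it adapts with almost no change to obtain the sharper two-term inequalities involving both $\lambda_{k+1}$ and $\lambda_{k+2}$ (as in Lemma \ref{lem2.1}), which your tail-sum argument does not immediately yield.
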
Similarly, Q.-M. Cheng and the author \cite{CZ} proved the following (also see \cite{Z}):
\begin{prop}\label{prop2.7}
Let $(M^{n},g,)$ be an $n$-dimensional closed metric measure space.
Assume that $\overline{\lambda}_{i}$  is  the $i^{\text{th}}$ eigenvalue of the
close eigenvalue problem \eqref{Eigen-Prob-closed}  and $u_{i}$ is an
orthonormal eigenfunction corresponding to $\overline{\lambda}_{i}$, $i =0,
1,2,\cdots$, such that
\begin{equation*}
\Delta_{f}u_{i} =-\overline{\lambda}_{i}u_{i}\ \  and\ \ \int_{M^{n}}
u_{i}u_{j}e^{-f}dv=\delta_{ij},\ \
 \text{\rm for any} \ i,j=0,1,2,\cdots.
\end{equation*}
Then, for any function $h(x)\in C^{2}(M^{n})$ and any positive
integer $k$,  eigenvalues of the close eigenvalue problem \eqref{Eigen-Prob-closed} satisfy
\begin{equation}
\begin{aligned}
\label{2.11}
\sum^{k}_{i=0}(\overline{\lambda}_{k+1}-\overline{\lambda}_{i})^{2}\|u_{i}\nabla
h\|_{M^{n}}^{2} \leq\sum^{k}_{i=0}(\overline{\lambda}_{k+1}-\overline{\lambda}_{i})
\|2\langle\nabla h,~\nabla u_{i}\rangle+u_{i}\Delta
_{f}h\|_{M^{n}}^{2}.
\end{aligned}
\end{equation}

\end{prop}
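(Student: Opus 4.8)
The plan is to prove \eqref{2.11} as the closed–manifold counterpart of the Xia--Xu formula \eqref{gen-for} of Proposition \ref{thm-x-x}: the argument is identical in structure, the only changes being that the summation index runs from $i=0$ (so as to include $\overline{\lambda}_{0}=0$) and that every integration by parts is boundary–free, since $M^{n}$ is closed, so that \eqref{self-adjiont} applies with no boundary contribution. Throughout I abbreviate $p_{i}:=2\langle\nabla h,\nabla u_{i}\rangle+u_{i}\Delta_{f}h$ and set
\begin{equation*}
a_{ij}=\int_{M^{n}}hu_{i}u_{j}e^{-f}dv,\qquad i,j=0,1,2,\cdots,
\end{equation*}
so that $a_{ij}=a_{ji}$ and, since $\{u_{j}\}_{j=0}^{\infty}$ is a complete orthonormal basis of the weighted space $L^{2}(M^{n},e^{-f}dv)$, one has the expansion $hu_{i}=\sum_{j=0}^{\infty}a_{ij}u_{j}$.

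First I would record two Parseval–type identities. Since $\Delta_{f}(hu_{i})=h\Delta_{f}u_{i}+u_{i}\Delta_{f}h+2\langle\nabla h,\nabla u_{i}\rangle=p_{i}-\overline{\lambda}_{i}hu_{i}$, the self–adjointness relation \eqref{self-adjiont} on the closed manifold $M^{n}$ gives the Fourier coefficients
\begin{equation*}
\int_{M^{n}}p_{i}u_{j}e^{-f}dv=\int_{M^{n}}\big(\Delta_{f}(hu_{i})+\overline{\lambda}_{i}hu_{i}\big)u_{j}e^{-f}dv=(\overline{\lambda}_{i}-\overline{\lambda}_{j})a_{ij},
\end{equation*}
whence by Parseval $\|p_{i}\|_{M^{n}}^{2}=\sum_{j=0}^{\infty}(\overline{\lambda}_{j}-\overline{\lambda}_{i})^{2}a_{ij}^{2}$. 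Next, applying \eqref{self-adjiont} to the pair $(hu_{i}^{2},h)$ and expanding $\langle\nabla(hu_{i}^{2}),\nabla h\rangle=u_{i}^{2}|\nabla h|^{2}+2hu_{i}\langle\nabla u_{i},\nabla h\rangle$ yields $\int_{M^{n}}hu_{i}p_{i}e^{-f}dv=-\|u_{i}\nabla h\|_{M^{n}}^{2}$; comparing this with the Fourier expansion $\int_{M^{n}}hu_{i}p_{i}e^{-f}dv=\sum_{j=0}^{\infty}(\overline{\lambda}_{i}-\overline{\lambda}_{j})a_{ij}^{2}$ gives the second identity $\|u_{i}\nabla h\|_{M^{n}}^{2}=\sum_{j=0}^{\infty}(\overline{\lambda}_{j}-\overline{\lambda}_{i})a_{ij}^{2}$.

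Substituting these two identities into \eqref{2.11} reduces the claim to the purely algebraic inequality
\begin{equation*}
\sum_{i=0}^{k}\sum_{j=0}^{\infty}(\overline{\lambda}_{k+1}-\overline{\lambda}_{i})(\overline{\lambda}_{j}-\overline{\lambda}_{i})(\overline{\lambda}_{j}-\overline{\lambda}_{k+1})a_{ij}^{2}\geq0.
\end{equation*}
I would then split the inner sum at $j=k$. For $j\geq k+1$ all three factors $\overline{\lambda}_{k+1}-\overline{\lambda}_{i}\geq0$, $\overline{\lambda}_{j}-\overline{\lambda}_{i}\geq0$ and $\overline{\lambda}_{j}-\overline{\lambda}_{k+1}\geq0$ are nonnegative (using $i\leq k<k+1\leq j$), so every such term is nonnegative. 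For the block $0\leq i,j\leq k$ I would symmetrize in $i$ and $j$: since $a_{ij}=a_{ji}$, averaging each term with its $i\leftrightarrow j$ transpose produces the factor $(\overline{\lambda}_{j}-\overline{\lambda}_{i})\big[(\overline{\lambda}_{k+1}-\overline{\lambda}_{i})(\overline{\lambda}_{j}-\overline{\lambda}_{k+1})-(\overline{\lambda}_{k+1}-\overline{\lambda}_{j})(\overline{\lambda}_{i}-\overline{\lambda}_{k+1})\big]$, and the bracket is identically $0$ (putting $x=\overline{\lambda}_{i}-\overline{\lambda}_{k+1}$ and $y=\overline{\lambda}_{j}-\overline{\lambda}_{k+1}$, it equals $-xy+yx=0$). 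Hence the $0\leq i,j\leq k$ block contributes nothing and the whole double sum is nonnegative, which is exactly \eqref{2.11}.

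The integration–by–parts computations are routine; the one technical point is the completeness/convergence used to pass to the Parseval series, legitimate since $h\in C^{2}(M^{n})$ and $M^{n}$ is compact, so $hu_{i},p_{i}\in L^{2}(M^{n},e^{-f}dv)$. The genuine crux is the symmetrization that annihilates the $j\leq k$ block: this is precisely where the orthonormality of $\{u_{i}\}$ and the symmetry $a_{ij}=a_{ji}$ are indispensable, and it is the step I expect to be the main obstacle to set up cleanly.
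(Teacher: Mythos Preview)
Your proof is correct. The two Parseval identities are computed cleanly, and the reduction to the algebraic inequality together with the $i\leftrightarrow j$ symmetrization of the $0\le i,j\le k$ block is exactly right: with $x=\overline{\lambda}_i-\overline{\lambda}_{k+1}$ and $y=\overline{\lambda}_j-\overline{\lambda}_{k+1}$ the paired contribution is $a_{ij}^2(y-x)\big[(-x)y-(-y)x\big]=0$, so only the manifestly nonnegative $j\ge k+1$ terms survive.

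As for comparison with the paper: the paper does not supply its own proof of this proposition. It cites the Cheng--Zeng preprint \cite{CZ} (and \cite{Z}) for the result, and in the remark immediately following suggests an alternative route via the unitary equivalence of $\Delta_f$ on $(M^n,e^{-f}dv)$ with the Schr\"{o}dinger operator $\Delta+\tfrac12\Delta f+\tfrac14|\nabla f|^2$ on $(M^n,dv)$, thereby reducing to the Schr\"{o}dinger--operator inequalities in \cite{SI,SHI,IM1,IM2,WX}. Your argument is different from that reduction: it is a direct, self-contained Fourier/Parseval computation on the weighted $L^2$ space that never leaves the drifting Laplacian setting and does not invoke the Rayleigh--Ritz trial-function machinery used elsewhere in the paper (Lemma~\ref{lem2.1}). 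The advantage of your approach is transparency and independence from external references; the paper's suggested approach has the advantage of importing the result wholesale from the Schr\"{o}dinger literature.
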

\begin{rem}However, it is very well known that the drifting Laplacian
$\Delta f :=\Delta+ \langle \nabla f,\cdot\rangle$
on a compact measure metric space $(M^{n}, g, e^{-f} dv)$ is unitarily equivalent
to the Schr\"{o}dinger operator $$\Delta + \frac{1}{2}\Delta f +\frac{1}{4}|\nabla f|^{2}$$
on $(M^{n}, g)$ and thus it has
the same spectrum (see for instance \cite{S}). Therefore,  proposition \ref{prop2.7} can be proved  by the similar method  in \cite{SI,SHI,IM1,IM2,WX}
by replacing the potential $q$ or $V$
in that papers by
$\Delta + \frac{1}{2}\Delta f +\frac{1}{4}|\nabla f|^{2}$.
\end{rem}

\section{Proofs of theorem \ref{thm1.1} and theorem \ref{thm1.2}}\label{sec3}
\vskip 3mm

In this section, we would like to give the proofs of theorem \ref{thm1.1} and theorem \ref{thm1.2}. Firstly, we need the
following lemma which will be
found in \cite{CC}.

\begin{lem}\label{lem3.1}
For  an $n$-dimensional  submanifold  $M^{n}$ in Euclidean space
$\mathbb{R}^{n+p}$,   let $y=(y^{1},y^{2},\cdots,y^{n+p})$ is the
position vector of a point $p\in M^{n}$ with
$y^{\alpha}=y^{\alpha}(x_{1}, \cdots, x_{n})$, $1\leq \alpha\leq
n+p$, where $(x_{1}, \cdots, x_{n})$ denotes a local coordinate
system of $M^n$. Then, we have
\begin{equation}\label{lemma-3.1-1}
\sum^{n+p}_{\alpha=1}g(\nabla y^{\alpha},\nabla y^{\alpha})= n,
\end{equation}
\begin{equation}
\begin{aligned}\label{lemma-3.1-2}
\sum^{n+p}_{\alpha=1}g(\nabla y^{\alpha},\nabla u)g(\nabla
y^{\alpha},\nabla w)=g(\nabla u,\nabla w),
\end{aligned}
\end{equation}
for any functions  $u, w\in C^{1}(M^{n})$,
\begin{equation}\label{lemma-3.1-3}
\begin{aligned}
\sum^{n+p}_{\alpha=1}(\Delta y^{\alpha})^{2}=n^{2}H^{2},
\end{aligned}
\end{equation}
\begin{equation}\label{lemma-3.1-4}
\begin{aligned}
\sum^{n+p}_{\alpha=1}\Delta y^{\alpha}\nabla y^{\alpha}= 0,
\end{aligned}
\end{equation}
where $H$ is the mean curvature of $M^{n}$.
\end{lem}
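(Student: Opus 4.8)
The plan is to exploit the fact that the position vector $y=(y^{1},\dots,y^{n+p})$ realizes $M^{n}$ as an isometric immersion into $\mathbb{R}^{n+p}$, so that each coordinate function $y^{\alpha}$ is a smooth function on $M^{n}$ whose gradient and Laplacian can be tied to the extrinsic geometry. The two basic facts I would isolate at the outset are: (i) for any tangent vector field $X$ on $M^{n}$ one has $X(y^{\alpha})=g(\nabla y^{\alpha},X)$ by definition of the gradient, and since the immersion is isometric, $g(X,Y)=\sum_{\alpha=1}^{n+p}X(y^{\alpha})Y(y^{\alpha})$ for all tangent $X,Y$ (this is just $\langle dy(X),dy(Y)\rangle=g(X,Y)$ written out in the standard basis of $\mathbb{R}^{n+p}$); and (ii) Beltrami's formula, namely that the componentwise Laplacian $(\Delta y^{1},\dots,\Delta y^{n+p})$ equals $n\vec{H}$, where $\vec{H}$ is the mean curvature vector of $M^{n}$, and that $n\vec{H}$ is normal to $M^{n}$.

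Granting (i), identity \eqref{lemma-3.1-2} is immediate: substituting $X=\nabla u$ and $Y=\nabla w$ into $g(X,Y)=\sum_{\alpha}X(y^{\alpha})Y(y^{\alpha})$ and rewriting $X(y^{\alpha})=g(\nabla y^{\alpha},\nabla u)$, $Y(y^{\alpha})=g(\nabla y^{\alpha},\nabla w)$ yields exactly $\sum_{\alpha}g(\nabla y^{\alpha},\nabla u)\,g(\nabla y^{\alpha},\nabla w)=g(\nabla u,\nabla w)$. For identity \eqref{lemma-3.1-1} I would fix a local orthonormal frame $\{E_{i}\}_{i=1}^{n}$ on $M^{n}$, expand $\nabla y^{\alpha}=\sum_{i}E_{i}(y^{\alpha})E_{i}$, and interchange the order of summation to get $\sum_{\alpha}|\nabla y^{\alpha}|^{2}=\sum_{i}\sum_{\alpha}\big(E_{i}(y^{\alpha})\big)^{2}=\sum_{i}g(E_{i},E_{i})=n$, the middle equality being the isometry relation of (i) applied to each unit vector $E_{i}$.

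For the two Laplacian identities I would invoke (ii). Identity \eqref{lemma-3.1-3} is then simply the squared length of the vector $n\vec{H}$, namely $\sum_{\alpha}(\Delta y^{\alpha})^{2}=|n\vec{H}|^{2}=n^{2}H^{2}$ with $H=|\vec{H}|$. Identity \eqref{lemma-3.1-4} follows by pairing: writing $\sum_{\alpha}\Delta y^{\alpha}\,\nabla y^{\alpha}=\sum_{i}\big(\sum_{\alpha}\Delta y^{\alpha}E_{i}(y^{\alpha})\big)E_{i}$ and recognizing the inner coefficient as the Euclidean inner product $\langle n\vec{H},\,dy(E_{i})\rangle$, which vanishes because $n\vec{H}$ is normal while $dy(E_{i})$ is tangent; hence the whole tangential expression is zero.

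The only genuinely nontrivial input is (ii), Beltrami's formula together with the normality of $n\vec{H}$; this rests on the Gauss formula decomposing the ambient covariant derivative into tangential and normal parts and identifying the trace of the second fundamental form with the mean curvature vector. Everything else is bookkeeping with the isometry relation of step (i). I therefore expect the main conceptual obstacle to be the careful derivation of $\Delta y=n\vec{H}$, after which all four identities drop out mechanically.
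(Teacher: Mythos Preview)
Your argument is correct and is precisely the standard one: the isometry relation $g(X,Y)=\sum_{\alpha}X(y^{\alpha})Y(y^{\alpha})$ immediately gives \eqref{lemma-3.1-2} and, after tracing over an orthonormal frame, \eqref{lemma-3.1-1}; Beltrami's formula $(\Delta y^{1},\dots,\Delta y^{n+p})=n\vec{H}$ together with the normality of $\vec{H}$ then yields \eqref{lemma-3.1-3} and \eqref{lemma-3.1-4}. There is nothing to compare here, since the paper does not supply its own proof of this lemma but simply quotes it from \cite{CC}; your write-up would serve as a complete self-contained justification.
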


Utilizing the general formula \eqref{gen-for}, one can deduce that

\begin{equation}
\begin{aligned}
\label{Yang-type-ineq} \sum^{k}_{i=1}(\lambda_{k+1}-\lambda_{i})^{2}
&\leq\dfrac4n\sum^{k}_{i=1}(\lambda_{k+1}-\lambda_{i}) \left(\lambda_i+\int_{\Omega}(n^{2}H^{2}+2\Delta
f-|\nabla f|^{2})e^{-f}dv\right)\\&\leq\dfrac4n\sum^{k}_{i=1}(\lambda_{k+1}-\lambda_{i}) (\lambda_i+c),
\end{aligned}
\end{equation}where
\begin{equation}\label{c}
c:=\frac{1}{4}\inf_{\psi\in \Psi}\max_{M^{n}}(n^{2}H^{2}+2|\Delta_{f}
f|+|\nabla f|^{2}),
\end{equation} and  $\Psi$ denotes the set of all isometric immersions from $M^n$
into a Euclidean space.

{\bf A recursion formula of Cheng and Yang}.  Let  $\mu_1 \leq  \mu_2 \leq  \dots,
\leq \mu_{k+1}$ be any positive  real numbers satisfying
\begin{equation*}
  \sum_{i=1}^k(\mu_{k+1}-\mu_i)^2 \le
 \frac 4n\sum_{i=1}^k\mu_i(\mu_{k+1} -\mu_i).
\end{equation*}
 Define
 \begin{equation*}
 \Lambda_k=\frac 1k\sum_{i=1}^k\mu_i,\qquad T_k=\frac 1k
\sum_{i=1}^k\mu_i^2, \ \ \
F_k=\left (1+\frac 2n \right )\Lambda_k^2-T_k.
\end{equation*}
Then, we have
\begin{equation}
F_{k+1}\leq C(n,k) \left ( \frac {k+1}k \right )^{\frac 4n}F_k,
\end{equation}
where
$$
C(n,k) =1-\frac 1{3n}
  \left (\frac k{k+1}\right )^{\frac
  4n}\frac {\left(1+\frac 2n\right )\left (1+
  \frac 4n\right )}{(k+1)^3}<1.
$$
By using the recursion formula given by Q.-M. Cheng and H.-C. Yang, the author proved the following
\begin{thm}
Let $(M^{n},g,f)$ be an $n$-dimensional complete  metric measure space. $\lambda_{k}$ denotes the
$k$-th eigenvalue of the Dirichlet
problem \eqref{Eigenvalue-Problem} of  the drifting Laplacian. Then, for
any $k\geq 1$,
\begin{equation}\label{zlz-upper}
\lambda_{k+1}+ c \leq (1+\frac {4}{n})(\lambda_{1}+c)k^{2/n},
\end{equation}
where $c$ is the same constant as in \eqref{c}.
\end{thm}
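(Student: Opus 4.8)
The plan is to strip away the geometry and reduce \eqref{zlz-upper} to the purely algebraic recursion of Cheng and Yang. The one geometric input needed is the Yang-type inequality \eqref{Yang-type-ineq}, which has already been extracted from the general formula \eqref{gen-for} and Lemma \ref{lem3.1}. First I would set $\mu_i:=\lambda_i+c$ for $i=1,\dots,k+1$. Since the terms $n^2H^2$, $2|\Delta_f f|$ and $|\nabla f|^2$ defining $c$ in \eqref{c} are all nonnegative, we have $c\ge 0$, so each $\mu_i>0$, and the $\mu_i$ inherit the monotonicity of the $\lambda_i$; moreover $\lambda_{k+1}-\lambda_i=\mu_{k+1}-\mu_i$. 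Under this substitution \eqref{Yang-type-ineq} reads
\[
\sum_{i=1}^k(\mu_{k+1}-\mu_i)^2\le\frac4n\sum_{i=1}^k(\mu_{k+1}-\mu_i)\mu_i,
\]
which is exactly the hypothesis needed to invoke the recursion formula of Cheng and Yang for the sequence $\{\mu_i\}$.

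Next I would iterate that recursion. Writing $F_j=(1+\tfrac2n)\Lambda_j^2-T_j$, the recursion formula gives $F_{j+1}\le C(n,j)\big(\tfrac{j+1}{j}\big)^{4/n}F_j$ for each $j$. Because $C(n,j)<1$, discarding these factors and telescoping the remaining product $\prod_{j=1}^{k-1}\big(\tfrac{j+1}{j}\big)^{4/n}=k^{4/n}$ yields $F_k\le k^{4/n}F_1$. Evaluating the base case, $\Lambda_1=\mu_1$ and $T_1=\mu_1^2$, so $F_1=\tfrac2n\mu_1^2$ and hence $F_k\le\tfrac2n\mu_1^2\,k^{4/n}$. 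Discarding the factors $C(n,j)<1$ is precisely what trades the sharp constant $C_0(n,k)$ of \eqref{Cheng-Yang-ineq} for the crude $1+\tfrac4n$ appearing in \eqref{zlz-upper}.

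It then remains to turn this bound on the averaged quantity $F_k$ into a bound on the single eigenvalue $\mu_{k+1}$. For this I would expand \eqref{Yang-type-ineq} again, now in terms of $\Lambda_k$ and $T_k$, to obtain the quadratic inequality $\mu_{k+1}^2-2(1+\tfrac2n)\Lambda_k\mu_{k+1}+(1+\tfrac4n)T_k\le 0$, which solved for $\mu_{k+1}$ and rewritten through $F_k$ (via $T_k=(1+\tfrac2n)\Lambda_k^2-F_k$) gives
\[
\mu_{k+1}\le\Big(1+\tfrac2n\Big)\Lambda_k+\sqrt{\Big(1+\tfrac4n\Big)F_k-\tfrac2n\Big(1+\tfrac2n\Big)\Lambda_k^2}.
\]
Treating $F_k$ as fixed and maximizing the right-hand side over $\Lambda_k\ge0$ — the maximum occurring at $\Lambda_k^2=\tfrac n2 F_k$ — collapses it to $(1+\tfrac4n)\sqrt{\tfrac n2 F_k}$, and inserting $F_k\le\tfrac2n\mu_1^2 k^{4/n}$ produces $\mu_{k+1}\le(1+\tfrac4n)\mu_1 k^{2/n}$. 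Undoing the substitution $\mu_i=\lambda_i+c$ is exactly \eqref{zlz-upper}.

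I expect the real content to lie in this final extraction rather than in the reduction. Concretely, the care is in (i) iterating the recursion correctly so that the telescoping product yields exactly $k^{4/n}$ and the factors $C(n,j)<1$ may be dropped to reach the stated constant $1+\tfrac4n$, and (ii) carrying out the optimization over $\Lambda_k$ so that the mixed expression in $\Lambda_k$ and $F_k$ collapses to the clean quantity $(1+\tfrac4n)\sqrt{\tfrac n2 F_k}$, checking that the maximizing value $\Lambda_k^2=\tfrac n2 F_k$ lies in the range where the square root stays real. By contrast, the substitution $\mu_i=\lambda_i+c$ and the verification that it meets the hypotheses of the recursion formula (positivity and monotonicity of the $\mu_i$) are routine.
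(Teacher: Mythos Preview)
Your proposal is correct and follows the same approach as the paper: substitute $\mu_i=\lambda_i+c$, verify the shifted sequence satisfies the hypothesis of the Cheng--Yang recursion via \eqref{Yang-type-ineq}, and then appeal to the Cheng--Yang machinery from \cite{CY2}. The paper does not actually spell out the proof of this theorem---it simply states that the result follows from the recursion formula (cf.\ the parallel Lemma~\ref{lem4.2}, whose proof reads ``Putting $\mu_{i+1}=\lambda_i+c>0$\ldots By making use of the same proof as in Cheng and Yang \cite{CY2}, we can complete our proof'')---so your expansion of the iteration $F_k\le k^{4/n}F_1$ and the quadratic extraction with optimization over $\Lambda_k$ is precisely the content that the paper leaves implicit by citation.
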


\textbf{\emph{Proof of theorem}} \ref{thm1.1}. Let $F_{j}(x)=\alpha_{j}x^{j}$ and $a_{j}>0$, such that

\begin{equation*}a_{j}^{2}=\|\nabla F_{j}u_{i}\|_{\Omega}^{2}\geq\sqrt{\||\nabla F_{j}|^{2}u_{i}\|_{\Omega}^{2}}=b_{j}\geq0,\end{equation*}
\begin{equation}\begin{aligned}\label{sum-01} \sum_{j=1}^{n+p}\int2 u_{i}\langle\nabla F_{j},\nabla f\rangle\Delta  F_{j}e^{-f}dv=0,\end{aligned}\end{equation}and
\begin{equation}\begin{aligned}\label{sum-0} \sum_{j=1}^{n+p}\int2 u_{i}\langle\nabla F_{j},\nabla u_{i}\rangle\Delta  F_{j}e^{-f}dv=0,\end{aligned}\end{equation}
where   $j=1,2,\cdots,n+p,$ and $x^{j}$ denotes the $j$-th standard coordinate function of the Euclidean space $\mathbb{R}^{n+p}$.
Let $$\alpha=\min_{1\leq j\leq n+p}\{\alpha_{j}\},$$$$\overline{\alpha}=\max_{1\leq j\leq n+p}\{\alpha_{j}\},$$$$\beta=\min_{1\leq j\leq n+p}\{b_{j}\},$$
and $l=n+p$, then, by lemma \ref{lem2.1} and \eqref{lemma-3.1-1}, we have

\begin{equation}\begin{aligned}\sum_{j=1}^{l}\frac{a_{j}^{2}+b_{j}}{2}&=\sum_{j=1}^{n+p}\frac{a_{j}^{2}+b_{j}}{2}\\&\geq\frac{1}{2}\left(n\alpha^{2}+
\sum_{j=1}^{n+p}b_{j}\right)\\&\geq\frac{1}{2}\left(n\alpha^{2}+(n+p)\beta\right),\end{aligned}\end{equation}
and
 \begin{equation}\begin{aligned}\label{3.10} \sum^{n+p}_{j=1}\int_{\Omega}u^{2}_{i}
 \langle\nabla F_{j},\nabla f\rangle^{2} e^{-f}dv= \sum^{n+p}_{j=1}\int_{\Omega}u^{2}_{i} \langle\nabla\left(a_{j}x^{j}\right),\nabla f\rangle^{2} e^{-f}dv
\leq\overline{\alpha}^{2}\int_{\Omega}u^{2}_{i}|\nabla f|^{2}e^{-f}dv.\end{aligned}\end{equation} From \eqref{lemma-3.1-3}, we obtain
\begin{equation}\sum^{n+p}_{j=1}(\Delta F_{j})^{2}=\sum^{n+p}_{j=1}\left(\Delta \left(a_{j}x^{j}\right)\right)^{2}\leq\overline{\alpha}^{2}n^{2}H^{2}.\end{equation}
Utilizing \eqref{lemma-3.1-2} and \eqref{self-adjiont}, we have
\begin{equation}\begin{aligned}-4\sum^{n+p}_{j=1}\int_{\Omega}u_{i}\langle\nabla F_{j},\nabla u_{i}\rangle\langle\nabla F_{j},\nabla f\rangle e^{-f}dv
&=-4\sum^{n+p}_{j=1}\int_{\Omega}u_{i}\langle\nabla \left(a_{j}x^{j}\right),\nabla u_{i}\rangle\langle\nabla \left(a_{j}x^{j}\right),\nabla f\rangle e^{-f}dv\\
&\leq2\overline{\alpha}^{2}|\int_{\Omega}\langle\nabla f,\nabla u^{2}_{i}\rangle e^{-f}dv|\\
&=2\overline{\alpha}^{2}|\int_{\Omega}u^{2}_{i}\Delta_{f}f e^{-f}dv|,\end{aligned}\end{equation}
and

\begin{equation}\begin{aligned}\label{3.13}\sum_{j=1}^{n+p}\int_{\Omega}\langle\nabla F_{j},\nabla u_{i}\rangle^{2}e^{-f}dv
&=\sum_{j=1}^{n+p}\int_{\Omega}\langle\nabla \left(a_{j}x^{j}\right),\nabla u_{i}\rangle^{2}e^{-f}dv\\
&\leq\overline{\alpha}^{2}
\sum_{j=1}^{n+p}\int_{\Omega}\langle\nabla x_{j},\nabla u_{i}\rangle^{2}e^{-f}dv\\
&=\overline{\alpha}^{2}\lambda_{i}.\end{aligned}\end{equation}
Let $\Psi$ denote  the set of all isometric immersions from $M^n$
into a Euclidean space. Define
$$
c=\frac{1}{4}\inf_{\psi\in \Psi}\max_{\Omega}\left(|\nabla f|^{2}+2|\Delta_{f}f|
+n^{2}H^{2}\right)>0.
$$ Since eigenvalues are invariant under isometries,
by lemma \ref{lem3.1}, \eqref{sum-01}, \eqref{sum-0}, and \eqref{3.10}-\eqref{3.13}, we have

\begin{equation}\begin{aligned}&4(\lambda_{k+2}+c)\sum_{j=1}^{n+p}\|2\langle\nabla F_{j},\nabla u_{i}\rangle+u_{i}\Delta_{f} F_{j}\|_{\Omega}^{2}\\&\leq4(\lambda_{k+2}+c)\overline{\alpha}^{2}\left(4\lambda_{i}+\int_{\Omega}u^{2}_{i}|\nabla f|^{2}e^{-f}dv+2|\int_{\Omega}u^{2}_{i}\Delta_{f}f e^{-f}dv|+\int_{\Omega}u_{i}^{2}n^{2}H^{2}e^{-f}dv\right)
\\&\leq4(\lambda_{k+2}+c)\overline{\alpha}^{2}\left(4\lambda_{i}+\int_{\Omega}u^{2}_{i}\left(|\nabla f|^{2}+2|\Delta_{f}f|
+n^{2}H^{2}\right)e^{-f}dv\right)
\\&\leq16\lambda_{k+2}\overline{\alpha}^{2}\left(\lambda_{i}+c\right).\end{aligned}\end{equation}
In proposition \ref{prop2.3}, we let $i=1,\tau=c$. Then, from \eqref{general-formula-2}, we
have

\begin{equation}\begin{aligned}\label{gap1}\left(\lambda_{k+2}-\lambda_{k+1}\right)^{2}
\leq\frac{32\overline{\alpha}^{2}(\lambda_{k+2}+c)}{n\alpha^{2}+(N)\beta}\left(\lambda_{1}+c\right)
.
\end{aligned}\end{equation}
Furthermore, we deduce from \eqref{gap1} and \eqref{zlz-upper} that,
\begin{equation*}\begin{aligned}\lambda_{k+2}-\lambda_{k+1}&\leq
\sqrt{\frac{32\overline{\alpha}^{2}}{n\alpha^{2}+(n+p)\beta}}\sqrt{\lambda_{1}+c}\sqrt{\lambda_{k+2}+c}\\&\leq
(\lambda_{1}+c)\sqrt{\frac{32\overline{\alpha}^{2}C_{0}(n)}{n\alpha^{2}+(n+p)\beta}}(k+1)^{\frac{1}{n}}
\\&=C_{n,\Omega,f}(k+1)^{\frac{1}{n}},\end{aligned}
\end{equation*}
where $$C_{n,\Omega,f}=(\lambda_{1}+c)\sqrt{\frac{32\overline{\alpha}^{2}C_{0}(n)}{n\alpha^{2}+(n+p)\beta}}.$$
Therefore, we finish the proof of theorem \ref{thm1.1}.

$$\eqno\Box$$

According to the proof of theorem \ref{thm1.1}, it is not difficult to obtain the following corollary:

\begin{corr}\label{corr-3.2}
Let $(M^{n},g,f)$ be  an $n$-dimensional complete metric measure space isometrically
immersed in a Euclidean space $\mathbb{R}^{n+p}$, and $\lambda_{i}$ be the $i$-th $(i=1,2,\cdots,k)$
eigenvalue of the Dirichlet problem \eqref{Eigenvalue-Problem}. Then, for any
$k=1,2,\cdots,$ there are $(n+p+1)$ constants $\alpha,$ and $b_{j},j=1,2,\cdots,n+p$, such that

\begin{equation}\label{z-3}\begin{aligned}\lambda_{k+1}-\lambda_{k}\leq
C_{n,\Omega,f}k^{\frac{1}{n}},\end{aligned}
\end{equation}
where $$C_{n,\Omega,f}=(\lambda_{1}+c)\sqrt{\frac{32\overline{\alpha}^{2}C_{0}(n)}{n\alpha^{2}+\sum_{j=1}^{n+p}b_{j}}},$$$$c=\frac{1}{4}\int_{\Omega}u^{2}_{i}\left(|\nabla f|^{2}+2|\Delta_{f}f|
+n^{2}H^{2}\right)e^{-f}dv,$$and $C_{0}(n)$ is the same as the one in \eqref{Cheng-Yang-ineq}.
\end{corr}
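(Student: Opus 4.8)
The plan is to reproduce the proof of Theorem~\ref{thm1.1} essentially verbatim, changing only the index at which Proposition~\ref{prop2.3} is invoked so that the consecutive gap appearing on the left-hand side is $\lambda_{k+1}-\lambda_{k}$ rather than $\lambda_{k+2}-\lambda_{k+1}$. Concretely, I would apply Proposition~\ref{prop2.3} (which itself rests on Lemma~\ref{lem2.1} and Corollary~\ref{coro2.2}) with the integer $k$ replaced by $k-1$; every hypothesis there survives the shift, and its conclusion becomes
\begin{equation*}
\sum_{j=1}^{l}\frac{a_{j}^{2}+b_{j}}{2}\left(\lambda_{k+1}-\lambda_{k}\right)^{2}\leq 4(\lambda_{k+1}+\tau)\sum_{j=1}^{l}\|2\langle\nabla F_{j},\nabla u_{i}\rangle+u_{i}\Delta_{f}F_{j}\|_{\Omega}^{2}.
\end{equation*}
Equivalently, one may simply relabel the index in the final conclusion of Theorem~\ref{thm1.1}, since $\lambda_{k+2}-\lambda_{k+1}\leq C_{n,\Omega,f}(k+1)^{1/n}$ becomes $\lambda_{k+1}-\lambda_{k}\leq C_{n,\Omega,f}k^{1/n}$ under $k+1\mapsto k$.

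Next I would choose the same family of test functions as in Theorem~\ref{thm1.1}, namely $F_{j}(x)=\alpha_{j}x^{j}$ for $j=1,\dots,n+p$, where $x^{j}$ are the standard coordinate functions of $\mathbb{R}^{n+p}$, and keep the quantities $a_{j},b_{j},\alpha,\overline{\alpha}$ defined there. Setting $i=1$ and $\tau=c$, and using the identities of Lemma~\ref{lem3.1} together with the self-adjointness relation \eqref{self-adjiont} exactly as before, the right-hand sum is controlled by $4\overline{\alpha}^{2}(\lambda_{1}+c)$, so the full right-hand side is at most $16\overline{\alpha}^{2}(\lambda_{k+1}+c)(\lambda_{1}+c)$. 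For the left-hand coefficient I would stop one inequality earlier than in Theorem~\ref{thm1.1}, retaining $\sum_{j=1}^{l}\tfrac{a_{j}^{2}+b_{j}}{2}\geq\tfrac12\bigl(n\alpha^{2}+\sum_{j=1}^{n+p}b_{j}\bigr)$ rather than passing to the cruder bound $\tfrac12\bigl(n\alpha^{2}+(n+p)\beta\bigr)$. This produces
\begin{equation*}
\left(\lambda_{k+1}-\lambda_{k}\right)^{2}\leq\frac{32\overline{\alpha}^{2}(\lambda_{k+1}+c)}{n\alpha^{2}+\sum_{j=1}^{n+p}b_{j}}\left(\lambda_{1}+c\right),
\end{equation*}
with $c$ recorded in its integral form.

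To finish I would invoke the Cheng--Yang-type upper bound \eqref{zlz-upper} at level $k$, in the form $\lambda_{k+1}+c\leq C_{0}(n)(\lambda_{1}+c)k^{2/n}$ with $C_{0}(n)$ as in \eqref{Cheng-Yang-ineq}, substitute it into the displayed inequality, and take square roots. This yields $\lambda_{k+1}-\lambda_{k}\leq(\lambda_{1}+c)\sqrt{32\overline{\alpha}^{2}C_{0}(n)/(n\alpha^{2}+\sum_{j=1}^{n+p}b_{j})}\,k^{1/n}$, which is precisely \eqref{z-3}, and identifies the $(n+p+1)$ constants $\alpha$ and $b_{1},\dots,b_{n+p}$ as the claimed ones.

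The only point requiring genuine care, and the step I expect to be the main obstacle, is the bookkeeping of the constant $c$. In Theorem~\ref{thm1.1} the weighted integral $\int_{\Omega}u_{i}^{2}(|\nabla f|^{2}+2|\Delta_{f}f|+n^{2}H^{2})e^{-f}dv$ is first majorized by its pointwise maximum and then minimized over all isometric immersions, whereas here $c$ is defined to be one quarter of that raw weighted integral. I would therefore verify that \eqref{zlz-upper} remains valid with this integral $c$: this is immediate, because the recursion underlying \eqref{zlz-upper} only uses that each $\lambda_{i}$ may be replaced by $\lambda_{i}+c$ whenever $c$ dominates the $u_{i}^{2}$-weighted quantity appearing in \eqref{Yang-type-ineq}, and the weighted integral is trivially such a bound. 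Hence no genuinely new estimate is needed; the content of the corollary is the index shift together with the decision to record $c$ in its sharper integral form and to keep $\sum_{j}b_{j}$ in the denominator.
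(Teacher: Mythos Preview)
Your proposal is correct and matches the paper's approach exactly: the paper's own proof of Corollary~\ref{corr-3.2} consists of the single sentence ``According to the proof of theorem~\ref{thm1.1}, it is not difficult to obtain the following corollary,'' and your plan---relabel the index in Theorem~\ref{thm1.1}, stop one inequality earlier to keep $\sum_{j}b_{j}$ in the denominator, and record $c$ in its integral form---is precisely the intended reading. One small caution on your last paragraph: the claim that the integral $c$ ``trivially'' suffices for the recursion \eqref{zlz-upper} is not quite right as written, since the Cheng--Yang recursion needs a \emph{single} constant dominating the $u_{i}^{2}$-weighted quantity for \emph{every} $i=1,\dots,k$, whereas the integral with a fixed $u_{i}$ need not bound the corresponding integrals for other eigenfunctions; the paper's statement of $c$ (with its floating $i$) shares this ambiguity, so this is a defect of the corollary as formulated rather than of your argument.
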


\begin{rem}If $M^{n}$ is an $n$-dimensional Euclidean space, then we have $H=0$, and thus $c=0$. Let $\alpha_{j}=1$, where $j=1,2,\cdots,n+p$, then $h_{j}=x^{i}$. Thus, we have

$$\alpha=1,$$ and

$$\sum_{j=1}^{n+p}b_{j}=n,$$which implies that

$$C_{n,\Omega,f}=(\lambda_{1}+c)\sqrt{\frac{32\overline{\alpha}^{2}C_{0}(n)}{n\alpha^{2}+\sum_{j=1}^{n+p}b_{j}}}
=4\lambda_{1}\sqrt{\frac{C_{0}(n)}{n}}.$$ By \eqref{lemma-3.1-4}, we know that the assumption \eqref{sum-0} holds for the above case. Therefore, our result is sharper than Chen-Zheng-Yang's eigenvalue inequality \eqref{z-3}.
\end{rem}

\begin{rem}In corollary \ref{corr-3.2}, we assume that the complete Riemannian manifold $M^{n}$ is a minimal submanifold of the Euclidean space $\mathbb{R}^{n+p}$, then the constant $c$
is given by $$c=\frac{1}{4}\int_{M^{n}}u^{2}_{i}\left(|\nabla f|^{2}+2|\Delta_{f}f|\right)e^{-f}dv.$$
Furthermore, if $f$ is a constant, it is clear that the constant $c=0$.

\end{rem}
\textbf{\emph{Proof of theorem}} \ref{thm1.2}. By proposition \ref{general-formula-2} and lemma \ref{lem3.1}, we can give the proof by using the same method as the proof
of theorem \ref{thm1.1}.
$$\eqno\Box$$
Similarly, we have the following:
\begin{corr}\label{corr-3.3}
Let $(M^{n},g,f)$ be an $n$-dimensional closed metric measure space, and $\overline{\lambda}_{i}$ be the $i$-th $(i=0,1,2,\cdots,k)$
eigenvalue of the closed eigenvalue problem \eqref{Eigen-Prob-closed}. Then, for any
$k=1,2,\cdots,$ there are $(n+p+1)$ constants $\alpha,$ and $b_{j},j=1,2,\cdots,n+p$, such that

\begin{equation*}\begin{aligned}\overline{\lambda}_{k+1}-\overline{\lambda}_{k}\leq
C_{n,M^{n},f}k^{\frac{1}{n}},\end{aligned}
\end{equation*}
where $$C_{n,M^{n},f}=(\overline{\lambda}_{1}+c)\sqrt{\frac{32\overline{\alpha}^{2}C_{0}(n)}{n\alpha^{2}+\sum_{j=1}^{n+p}b_{j}}},$$and $C_{0}(n)$
is the same as the one in \eqref{Cheng-Yang-ineq}, and $$c=\frac{1}{4}\int_{M^{n}}u^{2}_{i}\left(|\nabla f|^{2}+2|\Delta_{f}f|
+n^{2}H^{2}\right)e^{-f}dv.$$
\end{corr}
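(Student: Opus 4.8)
The plan is to run the argument of Theorem \ref{thm1.1} essentially verbatim in the closed setting, the only structural changes being that every integral is taken over the closed manifold $M^n$ (so that no boundary terms arise and all ambient coordinate functions are legitimate global test functions), that the counting of eigenvalues begins at $i=0$ with $\overline{\lambda}_0=0$, and that the Dirichlet general formula of Proposition \ref{prop2.3} is replaced by its closed analogue, Proposition \ref{gen-for-2}. Concretely, I would take the scaled ambient coordinate functions $F_j(x)=\alpha_j x^j$, $j=1,\dots,n+p$, as test functions, where $x^j$ is the $j$-th standard coordinate of the isometric immersion $M^n\hookrightarrow\mathbb{R}^{n+p}$, and set $a_j=\sqrt{\|\nabla F_j u_i\|_{M^n}^2}$, $b_j=\sqrt{\||\nabla F_j|^2 u_i\|_{M^n}^2}$ with $\alpha=\min_j\alpha_j$ and $\overline{\alpha}=\max_j\alpha_j$. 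As in the proof of Theorem \ref{thm1.1}, the immersion is chosen so that the relevant cross terms vanish; this is exactly the content of identity \eqref{lemma-3.1-4} of Lemma \ref{lem3.1}.

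The heart of the computation is to bound the right-hand side of \eqref{general-formula-4}. Using the four identities \eqref{lemma-3.1-1}--\eqref{lemma-3.1-4} I would obtain, upon summing over $j$, the coefficient estimate $\sum_{j}\frac{a_j^2+b_j}{2}\geq\frac12\bigl(n\alpha^2+\sum_{j=1}^{n+p}b_j\bigr)$, together with the termwise bounds $\sum_j\int_{M^n}u_i^2\langle\nabla F_j,\nabla f\rangle^2 e^{-f}dv\leq\overline{\alpha}^2\int_{M^n}u_i^2|\nabla f|^2 e^{-f}dv$, $\sum_j(\Delta F_j)^2\leq\overline{\alpha}^2 n^2H^2$, and $\sum_j\int_{M^n}\langle\nabla F_j,\nabla u_i\rangle^2 e^{-f}dv\leq\overline{\alpha}^2\overline{\lambda}_i$, the last using $\Delta_f u_i=-\overline{\lambda}_i u_i$ and \eqref{self-adjiont}. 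Expanding $\Delta_f F_j=\Delta F_j-\langle\nabla f,\nabla F_j\rangle$ and combining these with the vanishing cross terms yields
\begin{equation*}
\sum_{j=1}^{n+p}\bigl\|2\langle\nabla F_j,\nabla u_i\rangle+u_i\Delta_f F_j\bigr\|_{M^n}^2\leq 4\overline{\alpha}^2\bigl(\overline{\lambda}_i+c\bigr),
\end{equation*}
where $c=\frac14\int_{M^n}u_i^2\bigl(|\nabla f|^2+2|\Delta_f f|+n^2H^2\bigr)e^{-f}dv$ is precisely the constant appearing in the statement.

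With this in hand I would apply Proposition \ref{gen-for-2} with $i=1$ and $\tau=c$; since $\overline{\lambda}_0+c=c>0$ (for a closed immersed submanifold $H$ cannot vanish identically, so $c>0$ automatically), the hypothesis $\overline{\lambda}_i+\tau>0$ holds for every $i$. This gives $\frac12\bigl(n\alpha^2+\sum_j b_j\bigr)(\overline{\lambda}_{k+2}-\overline{\lambda}_{k+1})^2\leq16\overline{\alpha}^2(\overline{\lambda}_{k+2}+c)(\overline{\lambda}_1+c)$, that is, $(\overline{\lambda}_{k+2}-\overline{\lambda}_{k+1})^2\leq\frac{32\overline{\alpha}^2(\overline{\lambda}_{k+2}+c)}{n\alpha^2+\sum_j b_j}(\overline{\lambda}_1+c)$. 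Feeding in the closed-case Cheng--Yang bound $\overline{\lambda}_{k+2}+c\leq C_0(n)(\overline{\lambda}_1+c)(k+1)^{2/n}$ and taking square roots then produces the asserted estimate with $C_{n,M^n,f}=(\overline{\lambda}_1+c)\sqrt{\frac{32\overline{\alpha}^2 C_0(n)}{n\alpha^2+\sum_{j=1}^{n+p}b_j}}$, after relabelling $k+1$ as $k$.

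The main obstacle is this last ingredient: establishing the closed analogue of \eqref{zlz-upper}, namely $\overline{\lambda}_{k+1}+c\leq C_0(n)(\overline{\lambda}_1+c)k^{2/n}$, for the $0$-indexed sequence. I would derive it by first running the same test-function argument through Proposition \ref{prop2.7} to obtain the Yang-type inequality $\sum_{i=0}^{k}(\overline{\lambda}_{k+1}-\overline{\lambda}_i)^2\leq\frac4n\sum_{i=0}^{k}(\overline{\lambda}_{k+1}-\overline{\lambda}_i)(\overline{\lambda}_i+c)$, and then verifying that the shifted sequence $\mu_i=\overline{\lambda}_i+c$ (with $\mu_0=c>0$) satisfies the hypothesis of the Cheng--Yang recursion formula, so that the recursion yields the claimed power bound. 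The care required here is essentially bookkeeping introduced by the shift and by starting the count at $i=0$; once Proposition \ref{prop2.7} is invoked, the derivation is formally identical to the Dirichlet case treated in Theorem \ref{thm1.1}.
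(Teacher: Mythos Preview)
Your proposal is correct and follows essentially the same approach as the paper, which does not give a separate proof of Corollary \ref{corr-3.3} at all but merely states it after noting that Theorem \ref{thm1.2} is proved ``by using the same method as the proof of theorem \ref{thm1.1}'' via Proposition \ref{gen-for-2} and Lemma \ref{lem3.1}. Your write-up is in fact more careful than the paper's own treatment: you explicitly track the $0$-indexing, justify $c>0$ via the non-minimality of closed immersed submanifolds in Euclidean space, and spell out that the closed Cheng--Yang bound must be derived from Proposition \ref{prop2.7} together with the recursion formula.
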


\begin{rem}In corollary \ref{corr-3.3}, we assume that $M^{n}$ is a minimal submanifold of the Euclidean space $\mathbb{R}^{n+p}$, then the constant $c$
is given by $$c=\frac{1}{4}\int_{M^{n}}u^{2}_{i}\left(|\nabla f|^{2}+2|\Delta_{f}f|\right)e^{-f}dv.$$
Furthermore, if $f$ is a constant, then we know that the constant $c=0$.

\end{rem}

In theorem \ref{thm1.1}, the best constant $C_{n,\Omega,f}$ is called the gap coefficient. We shall note that it is worth noting
that it is very difficult for us to give the explicit expression of the optimal gap coefficient, even if $\Omega$ are some special domains in
the Euclidean space with  dimension $n$ and $f$ is a constant. Let $\Omega$ be a bounded domain with piecewise smooth boundary $\partial\Omega$ on an $n$-dimensional Riemannian manifold
$M^{n}$. If $\lambda_{i}$ is the $i$-th eigenvalue of Dirichlet problem \eqref{Eigen-Prob-Lapl}. According to a great amount of  numeric calculation for some special examples,
the author conjectured that \cite{Z3}: for any positive integer $k$,

\begin{equation*} \lambda_{k+1}-\lambda_{k}\leq (\lambda_{2}-\lambda_{1})k^{\frac{1}{n}}.
\end{equation*}
Therefore, in the sense of metric measure space, it is natural to generalize the above conjecture to the following:
\begin{con}\label{gap-conj-z}
Let $\Omega$ be a bounded domain with piecewise smooth boundary $\partial\Omega$ on an $n$-dimensional Riemannian manifold
$M^{n}$. If $\lambda_{i}$ is the $i$-th eigenvalue of Dirichlet problem \eqref{Eigenvalue-Problem} of the drifting Laplacian. Then, for any positive integer $k$,

\begin{equation}\label{con-z-1} \lambda_{k+1}-\lambda_{k}\leq (\lambda_{2}-\lambda_{1})k^{\frac{1}{n}}.
\end{equation}
\end{con}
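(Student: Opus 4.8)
The plan is to recast \eqref{con-z-1} in a scale-free form and to attack it by induction on $k$, feeding the shifted universal inequality of Proposition~\ref{prop2.3} into the Cheng--Yang recursion behind \eqref{zlz-upper}. Normalising the spectrum by $\mu_i:=(\lambda_i-\lambda_1)/(\lambda_2-\lambda_1)$, so that $\mu_1=0$ and $\mu_2=1$, the conjecture becomes the purely spectral assertion $\mu_{k+1}-\mu_k\le k^{1/n}$. The base case $k=1$ holds with equality, which suggests that the object to control is the normalised gap $G_k:=(\lambda_{k+1}-\lambda_k)/\big((\lambda_2-\lambda_1)k^{1/n}\big)$, and the aim is to propagate $G_1=1$ into $G_k\le 1$ for all $k$.

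First I would exploit the shift parameter $\tau$ in Proposition~\ref{prop2.3}. Taking $i=1$ and letting $\tau\downarrow-\lambda_1$, which is admissible because the only constraint is $\lambda_i+\tau>0$ for $1\le i\le k$, the prefactor $\lambda_{k+2}+\tau$ tends to $\lambda_{k+2}-\lambda_1$ while the norm on the right is unchanged. Running the estimates on the coordinate test maps $F_j=\alpha_j x^j$ exactly as in the proof of Theorem~\ref{thm1.1} then yields
\begin{equation*}
(\lambda_{k+2}-\lambda_{k+1})^2\le\frac{32\overline{\alpha}^2}{n\alpha^2+(n+p)\beta}\,(\lambda_{k+2}-\lambda_1)(\lambda_1+c).
\end{equation*}
Here the growth factor $\lambda_{k+2}-\lambda_1$ is already measured intrinsically from the bottom of the spectrum, and combining it with $\lambda_{k+2}-\lambda_1\le C(n)(\lambda_1+c)(k+1)^{2/n}$ from \eqref{zlz-upper} reproduces the sharp order $(k+1)^{1/n}$.

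The hard part, and the reason this falls short of \eqref{con-z-1}, is the surviving factor $\lambda_1+c$. Every universal inequality available here, namely \eqref{ppw-ineq}, \eqref{y1-ineq} and the recursion formula, bounds the relevant norm $\|2\langle\nabla F_j,\nabla u_1\rangle+u_1\Delta_f F_j\|^2$ by a multiple of $\lambda_1+c$ rather than of the first gap $\lambda_2-\lambda_1$, and since typically $\lambda_2-\lambda_1<\lambda_1$ this estimate points in the wrong direction. Replacing $\lambda_1+c$ by $\lambda_2-\lambda_1$ seems to demand a test object whose effective ground state is $\lambda_2$ rather than $\lambda_1$: concretely, I would feed the corrected function $F_j u_1-\sum_{\ell}\gamma_\ell u_\ell$, with the $\gamma_\ell$ chosen to annihilate the $u_1$-component, into the shifted formula, so that the Rayleigh quotient controlling the right-hand side is governed by $\lambda_2-\lambda_1$; the inductive hypothesis $G_k\le 1$ would then have to supply precisely the slack needed to absorb the lower-order cross terms. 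Making this exchange rigorous, uniformly in the geometry, is the principal obstacle, and I do not expect the unconditional statement to follow from the present machinery alone.

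Accordingly, I would settle for verifying \eqref{con-z-1} where the spectrum is explicit or lattice-structured. For rectangular boxes and flat tori the eigenvalues are $\pi^2\sum_i m_i^2/a_i^2$, for Euclidean, spherical and hyperbolic balls they are governed by zeros of Bessel or hypergeometric functions, and in the weighted case the Gaussian shrinker $f=|X|^2/2$ yields the Hermite spectrum; in each setting $G_k$ can be bounded directly using the concavity of $j\mapsto j^{1/n}$ together with the known spacing of the eigenvalues. This is the precise content of the remark that the conjecture ``is true in some special cases,'' and it is the level of generality at which I would expect a complete and rigorous argument.
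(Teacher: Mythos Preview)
The statement you are addressing is presented in the paper as a \emph{conjecture}, not a theorem: the paper offers no proof and explicitly leaves it open, remarking only that it holds ``in some special cases.'' There is therefore no proof in the paper against which to compare your proposal.

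Your analysis is nonetheless accurate and well calibrated. You correctly observe that the shifted general formula of Proposition~\ref{prop2.3}, combined with the Cheng--Yang recursion behind \eqref{zlz-upper}, produces a gap bound of the right order $k^{1/n}$ but with prefactor controlled by $\lambda_1+c$ rather than by $\lambda_2-\lambda_1$, and that nothing in the paper converts the former into the latter. The heuristic of projecting out the $u_1$-component so that the effective Rayleigh quotient is governed by $\lambda_2-\lambda_1$ is a natural idea, but as you yourself note, it does not close uniformly in the geometry; this is exactly why the statement remains a conjecture. Your fallback to explicit spectra (rectangular boxes, flat tori, balls, the Gaussian shrinker) is consistent with the level of generality the paper itself claims for \eqref{con-z-1}.
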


\begin{rem} As we know, for the Dirichlet problem \eqref{Eigenvalue-Problem} on the Riemannian manifolds, the gap of the consecutive eigenvalues $\lambda_{k+1}-\lambda_{k}$ is
bounded by the first $k$-th eigenvalues in the previous literatures. However, from the above conjecture, we know that the gap of the consecutive eigenvalues
is bounded only by the first two eigenvalues.  \end{rem}

\section{Eigenvalues on the Ricci Solitons}\label{sec4}
\vskip3mm

 As an application of general formula of eigenvalues of
drifting Laplacian on complete metric measure spaces, we will consider
the gradient Ricci solitons in this section. Recall that Ricci solitons play an important role as
singularity dilations in the Ricci flow proof of uniformization, see
\cite{CK}. They correspond to self-similar solutions of Ricci flow
\cite{Ha1}, and usually serve as natural generalizations of
Einstein metrics. Assume that $S$ denotes the scalar curvature of $M^{n}$, then we have the following

\begin{prop}
\label{prop4.1} For  an $n$-dimensional closed gradient Ricci soliton $(M^{n},g,f)$, for any $k$,
 eigenvalues  of the closed eigenvalue problem \eqref{Eigen-Prob-closed}
of drifting Laplacian satisfy
\begin{equation}\begin{aligned}\label{zlz-improve-ineq}\sum^{k}_{i=0}\left(\overline{\lambda}_{k+1}-\overline{\lambda}_{i}\right)^{2}\leq \sum^{k}_{i=0}
\left(\overline{\lambda}_{k+1}-\overline{\lambda}_{i}\right)\left(\overline{\lambda}_{i}+c\right),\end{aligned}
\end{equation}
where
   \begin{equation*}c=\frac{1}{4}\inf_{\psi\in\Psi}\max_{M^{n}}\left(n^{2}H^{2}+4|\rho f-\rho\overline{c}|
+2\rho f+n\rho-2\rho\overline{c}-S\right),\end{equation*} and
\begin{equation*}\overline{c}=\frac{\int_{M^{n}}fe^{-f}dv}{\int_{M^{n}}e^{-f}dv}.\end{equation*}
\end{prop}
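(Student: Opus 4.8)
The plan is to feed the coordinate functions of an isometric immersion into the general formula of Proposition \ref{prop2.7} and then translate the resulting curvature integrand into soliton data. By Nash's theorem I may regard $(M^n,g)$ as isometrically immersed in some $\mathbb{R}^{n+p}$ with position vector $y=(y^1,\dots,y^{n+p})$; because eigenvalues are isometry invariant, the infimum over all $\psi\in\Psi$ can be taken at the very end. First I would put $h=y^\alpha$ in \eqref{2.11} for each $\alpha=1,\dots,n+p$ and sum over $\alpha$. On the left, \eqref{lemma-3.1-1} gives $\sum_\alpha|\nabla y^\alpha|^2=n$, so together with $\int_{M^n}u_i^2e^{-f}dv=1$ the left-hand side collapses to $n\sum_{i=0}^k(\overline{\lambda}_{k+1}-\overline{\lambda}_i)^2$.

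For the right-hand side I would expand $\|2\langle\nabla y^\alpha,\nabla u_i\rangle+u_i\Delta_f y^\alpha\|_{M^n}^2$ using $\Delta_f y^\alpha=\Delta y^\alpha-\langle\nabla f,\nabla y^\alpha\rangle$ and sum over $\alpha$. Lemma \ref{lem3.1} handles all the bookkeeping: by \eqref{lemma-3.1-2} the sums $\sum_\alpha\langle\nabla y^\alpha,\nabla u_i\rangle^2$ and $\sum_\alpha\langle\nabla y^\alpha,\nabla u_i\rangle\langle\nabla y^\alpha,\nabla f\rangle$ become $|\nabla u_i|^2$ and $\langle\nabla u_i,\nabla f\rangle$, by \eqref{lemma-3.1-3} one has $\sum_\alpha(\Delta y^\alpha)^2=n^2H^2$, and the two cross terms containing $\sum_\alpha\Delta y^\alpha\nabla y^\alpha$ vanish by \eqref{lemma-3.1-4}. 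Integrating against $e^{-f}dv$, using $\int_{M^n}|\nabla u_i|^2e^{-f}dv=\overline{\lambda}_i$ and rewriting $-4\int_{M^n}u_i\langle\nabla u_i,\nabla f\rangle e^{-f}dv=2\int_{M^n}u_i^2\Delta_f f\,e^{-f}dv$ via the self-adjointness \eqref{self-adjiont}, the summed right-hand side becomes $\sum_{i=0}^k(\overline{\lambda}_{k+1}-\overline{\lambda}_i)\bigl(4\overline{\lambda}_i+\int_{M^n}u_i^2(n^2H^2+|\nabla f|^2+2\Delta_f f)e^{-f}dv\bigr)$, a Yang-type inequality of exactly the shape of \eqref{Yang-type-ineq}.

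It remains to recast the weight $n^2H^2+|\nabla f|^2+2\Delta_f f$ in soliton terms. Tracing the soliton equation \eqref{soliton-equation} gives $\Delta f=n\rho-S$, and the classical identity for gradient Ricci solitons gives $S+|\nabla f|^2-2\rho f=C_0$ for some constant $C_0$. I would determine $C_0$ by integrating against the weighted measure: since $\int_{M^n}\Delta_f f\,e^{-f}dv=0$ on the closed manifold and $\Delta_f f=\Delta f-|\nabla f|^2=n\rho-S-|\nabla f|^2$, combining this with the integrated Hamilton identity forces $C_0=n\rho-2\rho\overline{c}$, where $\overline{c}$ is the weighted mean of $f$. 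Substituting back yields the two clean identities $\Delta_f f=-2\rho(f-\overline{c})$ and $|\nabla f|^2=2\rho f+n\rho-2\rho\overline{c}-S$, so that $|\nabla f|^2+2|\Delta_f f|=4|\rho f-\rho\overline{c}|+2\rho f+n\rho-2\rho\overline{c}-S$. Bounding $\int_{M^n}u_i^2(\,\cdot\,)e^{-f}dv\le\max_{M^n}(\,\cdot\,)$ after passing from $2\Delta_f f$ to $2|\Delta_f f|$, and then taking the infimum over $\psi\in\Psi$, reproduces the constant $c$ in the statement and gives \eqref{zlz-improve-ineq}.

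The step I expect to be the main obstacle is pinning down the Hamilton constant $C_0$: one must integrate the soliton identities against $e^{-f}dv$ rather than $dv$ and track the sign carefully when replacing $2\Delta_f f$ by $2|\Delta_f f|$, so that the quantity inside the $\max$ matches the claimed expression precisely. I also note that the literal computation leaves a factor $\tfrac{4}{n}$ in front of the right-hand sum, exactly as in \eqref{Yang-type-ineq}, which should be carried into \eqref{zlz-improve-ineq}.
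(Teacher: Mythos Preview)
Your approach is essentially the paper's own: the paper simply quotes the Yang-type inequality \eqref{c-z-ineq} from \cite{CZ} (equivalently obtained by summing Proposition~\ref{prop2.7} over the coordinate functions, exactly as you do), then computes $2|\Delta_f f|+|\nabla f|^2$ from the soliton identities $S+\Delta f=n\rho$ and $S+|\nabla f|^2=2\rho f+\widetilde{c}$, determines $\widetilde{c}=n\rho-2\rho\overline{c}$ by integrating $\Delta_f f$ against $e^{-f}dv$, and bounds the integrand by its maximum. Your observation about the missing $\tfrac{4}{n}$ is correct: the paper's own proof ends with \eqref{z-ineq}, which carries the factor $\tfrac{4}{n}$ that the displayed statement \eqref{zlz-improve-ineq} omits.
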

\begin{proof}By making use of equation  \eqref{soliton-equation}, we have (cf. \cite{CX1, FS}):

\begin{equation}\label{4.1}
S+\Delta f=n\rho,
\end{equation}
and
\begin{equation}
\label{4.2}S+|\nabla f|^{2}=2\rho f+\widetilde{c},
\end{equation}
where $S$ denotes the scalar curvature of $M^{n}$ and
$\widetilde{c}$ is a constant. From (\ref{4.1}) and
(\ref{4.2}), we have

\begin{equation}\label{4.3}\Delta_{f}f=n\rho-2\rho f-\widetilde{c}.\end{equation}
Therefore, by integrating for (\ref{4.3}), we obtain

\begin{equation}\label{4.4}\widetilde{c}=n\rho-2\rho\frac{\int_{M^{n}}fe^{-f}dv}{\int_{M^{n}}e^{-f}dv}.\end{equation}
By making use of (\ref{4.1}), (\ref{4.2}) and (\ref{4.4}), we
have

\begin{equation}\label{4.5}2|\Delta _{f}f|+|\nabla f|^{2}
=|2n\rho-4\rho f-2\widetilde{c}|+2\rho f+\widetilde{c}-S.\end{equation}Hence, from (\ref{4.5}), we obtain

\begin{equation}\label{z-4.7}\begin{aligned}&\int_{M^{n}}u_{i}^{2}(2|\Delta_{f} f|+|\nabla
f|^{2})e^{-f}dv
\\&=\int_{M^{n}}u_{i}^{2}\left(|2n\rho-4\rho f-2\widetilde{c}|+2\rho f+\widetilde{c}-S\right)e^{-f}dv
\\&=\int_{M^{n}}u_{i}^{2}\left(4|\rho f-\rho\overline{c}|
+2\rho f+n\rho-2\rho\overline{c}-S\right)e^{-f}dv,\end{aligned}
\end{equation}where $$\overline{c}=\frac{\int_{M^{n}}fe^{-f}dv}{\int_{M^{n}}e^{-f}dv}.$$
Recall that, in \cite{CZ}, Cheng and the author proved the following(also see \cite{Z}):
\begin{equation}\label{c-z-ineq}
\begin{aligned}
&\sum^{k}_{i=0}(\overline{\lambda}_{k+1}-\overline{\lambda}_{i})^{2}\\
&\leq\frac{4}{n}\sum^{k}_{i=0}(\overline{\lambda}_{k+1}-\overline{\lambda}_{i})
\left(\overline{\lambda}_{i}+\frac{1}{4}\int_{M^{n}}u_{i}^{2}(n^{2}H^{2}+2\Delta
f-|\nabla f|^{2})e^{-f}dv\right).
\end{aligned}
\end{equation}Therefore, it follows from \eqref{c-z-ineq} that ,\begin{equation}\label{z-ineq}
\begin{aligned}
&\sum^{k}_{i=0}(\overline{\lambda}_{k+1}-\overline{\lambda}_{i})^{2}\\
&\leq\frac{4}{n}\sum^{k}_{i=0}(\overline{\lambda}_{k+1}-\overline{\lambda}_{i})
\left(\overline{\lambda}_{i}+\frac{1}{4}\int_{M^{n}}u_{i}^{2}(n^{2}H^{2}+2|\Delta_{f}
f|+|\nabla f|^{2})e^{-f}dv\right)
\\
&\leq\frac{4}{n}\sum^{k}_{i=0}(\overline{\lambda}_{k+1}-\overline{\lambda}_{i})
\left(\overline{\lambda}_{i}+c\right),
\end{aligned}
\end{equation}where$$c=\frac{1}{4}\inf_{\psi\in\Psi}\max_{M^{n}}\left(n^{2}H^{2}+4|\rho f-\rho\overline{c}|
+2\rho f+n\rho-2\rho\overline{c}-S\right).$$ Therefore, we finish the proof of this proposition.

\end{proof}

\begin{prop}
\label{prop4.1} Assume that  $M^{n}$ is a submanifold in the Euclidean space $\mathbb{R}^{n+p}$ and
$H$ is the mean curvature of the submanifold $M^{n}$. For  an $n$-dimensional complete gradient Ricci soliton
$(M^{n},g,f)$, there exists a function $H$ such that, for any $k$,
 eigenvalues  of the Dirichlet problem \eqref{Eigenvalue-Problem}
of drifting Laplacian satisfy
\begin{equation}\begin{aligned}\label{zlz-improve-ineq}\sum^{k}_{i=1}\left(\lambda_{k+1}-\lambda_{i}\right)^{2}\leq \sum^{k}_{i=1}
\left(\lambda_{k+1}-\lambda_{i}\right)\left(\lambda_{i}+c\right),\end{aligned}
\end{equation}
where
   \begin{equation*}c=\frac{1}{4}\inf_{\psi\in\Psi}\max_{\Omega}\left(n^{2}H^{2}+4|\rho f-\rho\overline{c}|
+2\rho f+n\rho-2\rho\overline{c}-S\right),\end{equation*} and
\begin{equation*}\overline{c}=\frac{\int_{\Omega}fe^{-f}dv}{\int_{\Omega}e^{-f}dv}.\end{equation*}
\end{prop}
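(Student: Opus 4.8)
The plan is to prove this as the Dirichlet-problem analogue of the preceding closed-manifold proposition, running the same three steps but starting from the general formula valid on a bounded domain. First I would invoke the Xia--Xu formula \eqref{gen-for} of Proposition \ref{thm-x-x}: taking $h$ to be, in turn, the Euclidean coordinate functions of an isometric immersion $\psi\colon M^{n}\to\mathbb{R}^{n+p}$, summing, and applying the identities \eqref{lemma-3.1-1}--\eqref{lemma-3.1-4} of Lemma \ref{lem3.1}, one is led to the Yang-type inequality \eqref{Yang-type-ineq}, which already furnishes
\begin{equation*}
\sum_{i=1}^{k}(\lambda_{k+1}-\lambda_{i})^{2}\leq\frac{4}{n}\sum_{i=1}^{k}(\lambda_{k+1}-\lambda_{i})\left(\lambda_{i}+\frac{1}{4}\int_{\Omega}u_{i}^{2}\left(n^{2}H^{2}+2|\Delta_{f}f|+|\nabla f|^{2}\right)e^{-f}dv\right).
\end{equation*}
The whole task then reduces to rewriting the weighted curvature term $2|\Delta_{f}f|+|\nabla f|^{2}$ in soliton quantities, just as in the closed case but now over $\Omega$ and with Proposition \ref{prop2.7} replaced by Proposition \ref{thm-x-x}.

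For this second step I would use only the pointwise soliton identities, which are insensitive to the presence of a boundary. Tracing the soliton equation \eqref{soliton-equation} yields \eqref{4.1}, that is $S+\Delta f=n\rho$, while the classical gradient identity for gradient solitons yields \eqref{4.2}, $S+|\nabla f|^{2}=2\rho f+\widetilde{c}$ for a constant $\widetilde{c}$; subtracting them gives \eqref{4.3}, $\Delta_{f}f=n\rho-2\rho f-\widetilde{c}$. Substituting into $2|\Delta_{f}f|+|\nabla f|^{2}$ and simplifying exactly as in \eqref{4.5} then expresses this quantity through $f$, $S$, $\rho$ and the single constant $\widetilde{c}$.

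The delicate point, and the only place where the Dirichlet case genuinely departs from the closed case, is the evaluation of $\widetilde{c}$. On a closed manifold one integrates \eqref{4.3} against $e^{-f}dv$ and uses $\int_{M^{n}}\Delta_{f}f\,e^{-f}dv=0$ to obtain $\widetilde{c}=n\rho-2\rho\overline{c}$; over a bounded domain the same integration instead leaves the boundary flux $\int_{\partial\Omega}\langle\nabla f,\nu\rangle e^{-f}dA$, which need not vanish, so $\widetilde{c}$ cannot be recovered from $\Omega$ alone. I would resolve this by retaining $\widetilde{c}$ as the globally determined soliton constant and recording the averaging constant as the weighted mean $\overline{c}=\frac{\int_{\Omega}fe^{-f}dv}{\int_{\Omega}e^{-f}dv}$, as in the statement; with this convention \eqref{4.5} gives the pointwise bound
\begin{equation*}
2|\Delta_{f}f|+|\nabla f|^{2}\leq 4|\rho f-\rho\overline{c}|+2\rho f+n\rho-2\rho\overline{c}-S.
\end{equation*}
Adding $n^{2}H^{2}$, taking $\max_{\Omega}$ and then $\inf_{\psi\in\Psi}$ over isometric immersions defines the constant $c$ of the statement (the asserted function $H$ being the mean curvature of the extremal immersion), and inserting this bound into the Yang-type inequality displayed above produces the inequality of the proposition. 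The main obstacle is therefore concentrated entirely in the control of $\widetilde{c}$ and the boundary flux; the remainder is a verbatim transcription of the closed-case computation with $M^{n}$ replaced by $\Omega$.
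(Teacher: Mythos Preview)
Your proposal follows exactly the route the paper intends: the paper gives no separate proof for this Dirichlet version, stating it immediately after the closed-manifold proposition as the obvious analogue obtained by replacing Proposition~\ref{prop2.7} with Proposition~\ref{thm-x-x} (equivalently, invoking \eqref{Yang-type-ineq}) and $M^{n}$ with $\Omega$ in the computation \eqref{4.1}--\eqref{z-4.7}. Your three-step outline reproduces that transcription faithfully.

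You have also correctly isolated the one step that is not a verbatim copy, namely the determination of $\widetilde{c}$ via \eqref{4.4}; the paper does not revisit this for the Dirichlet case. However, your proposed resolution does not actually work as written. Keeping $\widetilde{c}$ as the global soliton constant while defining $\overline{c}$ as the $\Omega$-weighted mean of $f$ does not by itself produce the identity $\widetilde{c}=n\rho-2\rho\overline{c}$; and it is precisely this identity that converts the right-hand side of \eqref{4.5},
\[
|2n\rho-4\rho f-2\widetilde{c}|+2\rho f+\widetilde{c}-S,
\]
into the form $4|\rho f-\rho\overline{c}|+2\rho f+n\rho-2\rho\overline{c}-S$ appearing in the statement. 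Without that substitution the bound you display in terms of $\overline{c}$ does not follow from \eqref{4.5}. The paper simply carries the closed-case formula over without comment, so this is a gap shared by both arguments rather than a divergence in method; but your sentence ``with this convention \eqref{4.5} gives the pointwise bound'' asserts more than has been shown.
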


\begin{lem}\label{lem4.2}
For an $n$-dimensional closed Ricci soliton  $(M^{n},g,f)$, the
$k^{\text{th}}$ eigenvalue $\lambda_{k}$ of the eigenvalue
problem {\rm \eqref{Eigen-Prob-closed}} of  the drifting Laplacian satisfy, for any
$k\geq 1$,
\begin{equation*}
\overline{\lambda}_{k+1}+ c
\leq (1+\frac {4}{n})(\overline{\lambda}_{1}+c) \ k^{2/n},
\end{equation*}
where
 $$c=\frac{1}{4}\inf_{\psi\in\Psi}\max_{M^{n}}\left(n^{2}H^{2}+4|\rho f-\rho\overline{c}|
+2\rho f+n\rho-2\rho\overline{c}-S\right),$$ and
\begin{equation*}\overline{c}=\frac{\int_{M^{n}}fe^{-f}dv}{\int_{M^{n}}e^{-f}dv}.\end{equation*}
\end{lem}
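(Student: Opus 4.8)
The plan is to reduce the lemma to the abstract recursion formula of Cheng and Yang through the substitution $\mu_i:=\overline{\lambda}_i+c$, exactly as in the derivation of \eqref{zlz-upper} in Section \ref{sec3}. First I would invoke the Yang-type inequality \eqref{z-ineq} already established for the closed gradient Ricci soliton, namely
$$\sum_{i=0}^{k}(\overline{\lambda}_{k+1}-\overline{\lambda}_i)^2\le\frac{4}{n}\sum_{i=0}^{k}(\overline{\lambda}_{k+1}-\overline{\lambda}_i)(\overline{\lambda}_i+c),$$
where $c$ is the soliton constant built from the identities \eqref{4.1} and \eqref{4.2}. Writing $\mu_i=\overline{\lambda}_i+c$ and using both $\overline{\lambda}_{k+1}-\overline{\lambda}_i=\mu_{k+1}-\mu_i$ and $\overline{\lambda}_i+c=\mu_i$, this inequality becomes
$$\sum_{i}(\mu_{k+1}-\mu_i)^2\le\frac{4}{n}\sum_{i}\mu_i(\mu_{k+1}-\mu_i),$$
which is precisely the hypothesis needed to run the Cheng--Yang recursion formula on the sequence $\{\mu_i\}$.

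Next I would check that $\{\mu_i\}$ is admissible for that recursion. The $\mu_i$ are nondecreasing because the $\overline{\lambda}_i$ are, and they are positive provided $\overline{\lambda}_i+c>0$, which is guaranteed by the choice of $c$ as the stated $\inf_{\psi\in\Psi}\max_{M^n}$ of a quantity dominated by $n^2H^2+4|\rho f-\rho\overline{c}|+2\rho f+n\rho-2\rho\overline{c}-S$. Forming the functionals $\Lambda_k,T_k$ and $F_k=(1+\frac{2}{n})\Lambda_k^2-T_k$ from $\{\mu_i\}$, the recursion $F_{k+1}\le C(n,k)\big(\tfrac{k+1}{k}\big)^{4/n}F_k$ applies; iterating it and invoking the bound $C_0(n)\le 1+\frac{4}{n}$ from \eqref{Cheng-Yang-ineq} yields $\mu_{k+1}\le(1+\frac{4}{n})\mu_1\,k^{2/n}$. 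Translating back through $\mu_{k+1}=\overline{\lambda}_{k+1}+c$ and $\mu_1=\overline{\lambda}_1+c$ gives exactly the asserted bound $\overline{\lambda}_{k+1}+c\le(1+\frac{4}{n})(\overline{\lambda}_1+c)k^{2/n}$. Since this recursion computation is word-for-word the one already carried out to obtain the Dirichlet conclusion \eqref{zlz-upper}, I would simply transcribe it, replacing $\lambda_i$ by $\overline{\lambda}_i$ and the Euclidean-immersion constant $c$ by the soliton constant.

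The step I expect to be most delicate is the bookkeeping of the indices. The closed spectrum is counted from $\overline{\lambda}_0=0$, whereas the recursion formula and the conclusion \eqref{zlz-upper} are anchored at the first eigenvalue; one must ensure that after the substitution the sum and the base term line up so that the recursion is anchored at $\mu_1=\overline{\lambda}_1+c$ with exponent $k^{2/n}$, rather than at $\mu_0=c$ with exponent $(k+1)^{2/n}$. Concretely, I would run the recursion on the subsequence $\{\overline{\lambda}_i+c\}_{i\ge 1}$, using the constant mode $i=0$ only to supply the Yang-type inequality \eqref{z-ineq}, and verify that the monotonicity $C(n,k)<1$ lets the iteration telescope cleanly down to the $k=1$ term. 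The remaining verifications, namely the positivity of $c$ and the strict inequality that legitimizes the division in the recursion, are routine once the soliton identities $S+\Delta f=n\rho$ and $S+|\nabla f|^2=2\rho f+\widetilde{c}$ underlying the definition of $c$ are in hand.
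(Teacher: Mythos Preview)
Your proposal is correct and follows essentially the same route as the paper: invoke the Yang-type inequality \eqref{zlz-improve-ineq} from Proposition~\ref{prop4.1}, shift by $c$, and feed the resulting sequence into the Cheng--Yang recursion. The paper handles the index bookkeeping you flag by the explicit reindexing $\mu_{i+1}=\overline{\lambda}_i+c$ (so the closed sum over $i=0,\dots,k$ becomes a sum over $1,\dots,k+1$ in the $\mu$-variables), rather than by restricting to a subsequence; this is the cleaner device, since your alternative of dropping the $i=0$ term from \eqref{z-ineq} does not obviously preserve the inequality.
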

\begin{proof} Putting
$$
\mu_{i+1}=\lambda_{i}+c>0,
$$
for any $i=0, 1, 2, \cdots$.  Then, we obtain from \eqref{zlz-improve-ineq}
\begin{equation}
\sum_{i=1}^k (\mu_{k+1}-\mu_{i})^2 \leq \frac{4}{n}\sum_{i=1}^k
(\mu_{k+1}-\mu_{i})\mu_i.
\end{equation}
By making use of the same proof as in Cheng and Yang \cite{CY2}, we can complete
our proof of lemma \ref{lem4.2}.
\end{proof}
Applying proposition   \ref{prop4.1} and lemma \ref{lem4.2}, we shall give the proof of theorem \ref{thm-z-3}.
\vskip 3mm
\textbf{\emph{Proof of theorem}} \ref{thm-z-3}.
Let $F_{j}(x)=\alpha_{j}x^{j}$ and $a_{j}>0$, such that

\begin{equation*}a_{j}^{2}=\|\nabla F_{j}u_{i}\|_{M^{n}}^{2}=\sqrt{\||\nabla F_{j}|^{2}u_{i}\|_{M^{n}}^{2}}=b_{j}\geq0,\end{equation*}

\begin{equation*}
\begin{aligned} \sum_{j=1}^{n+p}\int2 u_{i}\langle\nabla F_{j},\nabla f\rangle\Delta  F_{j}e^{-f}dv=0,\end{aligned}\end{equation*}
and \begin{equation*}\begin{aligned}  \sum_{j=1}^{n+p}\int2 u_{i}\langle\nabla F_{j},\nabla u_{i}\rangle\Delta  F_{j}e^{-f}dv=0,\end{aligned}\end{equation*} where
$j=1,2,\cdots,n+p,$ and $x^{j}$ denotes the $j$-th standard coordinate function of the Euclidean space $\mathbb{R}^{N}$.
Let $$\alpha=\min_{1\leq j\leq n+p}\{\alpha_{j}\},$$$$\overline{\alpha}=\max_{1\leq j\leq N}\{\alpha_{j}\},$$$$\beta=\min_{1\leq j\leq N}\{b_{j}\}.$$
Then, we have

\begin{equation}\begin{aligned}\label{4.12}\sum_{j=1}^{l}\frac{a_{j}^{2}+b_{j}}{2}\geq\frac{1}{2}\left(n\alpha^{2}+(n+p)\beta\right).\end{aligned}\end{equation}
By the same argument as the proof of theorem \ref{thm1.1},  we deduce

\begin{equation}\begin{aligned}\label{4.13}&4(\overline{\lambda}_{k+2}+c)\sum_{j=1}^{n+p}\|2\langle\nabla F_{j},\nabla u_{i}\rangle+u_{i}\Delta_{f} F_{j}\|_{M^{n}}^{2}
\\&\leq4(\overline{\lambda}_{k+2}+c)\overline{\alpha}^{2}\left(4\overline{\lambda}_{i}+\int_{M^{n}}u^{2}_{i}\left(|\nabla f|^{2}+2|\Delta_{f}f|
+n^{2}H^{2}\right)e^{-f}dv\right)
\\&\leq16(\overline{\lambda}_{k+2}+c)\overline{\alpha}^{2}\left(\overline{\lambda}_{i}+\frac{1}{4}\left(\int_{M^{n}}u^{2}_{i}\left(|\nabla f|^{2}+2|\Delta_{f}f|
\right)e^{-f}dv+\int_{M^{n}}u^{2}_{i}n^{2}H^{2}e^{-f}dv\right)\right).\end{aligned}\end{equation}
From the proof of proposition \ref{prop4.1} and inequality \eqref{4.13}, we infer that

\begin{equation}\begin{aligned}\label{4.14}&4(\overline{\lambda}_{k+2}+c)\sum_{j=1}^{N}\|2\langle\nabla F_{j},\nabla u_{i}\rangle+u_{i}\Delta_{f} F_{j}\|_{M^{n}}^{2}
\leq16\lambda_{k+2}\overline{\alpha}^{2}\left(\overline{\lambda}_{i}+c\right),\end{aligned}\end{equation}where
  $$c=\frac{1}{4}\inf_{\psi\in\Psi}\max_{M^{n}}\left(n^{2}H^{2}+4|\rho f-\rho\overline{c}|
+2\rho f+n\rho-2\rho\overline{c}-S\right),$$ and
\begin{equation*}\overline{c}=\frac{\int_{M^{n}}fe^{-f}dv}{\int_{M^{n}}e^{-f}dv}.\end{equation*}
Let $\tau=c$ in proposition \ref{gen-for-2}. Then, substituting \eqref{4.12} and \eqref{4.14} into \eqref{general-formula-4}, we obtain

\begin{equation*}\begin{aligned}\frac{1}{2}\left(n\alpha^{2}+(n+p)\beta\right)\left(\overline{\lambda}_{k+2}-\overline{\lambda}_{k+1}\right)^{2}
\leq16\lambda_{k+2}\overline{\alpha}^{2}\left(\overline{\lambda}_{i}+c\right),
\end{aligned}\end{equation*}
which implies that
\begin{equation*}\begin{aligned}\overline{\lambda}_{k+2}-\overline{\lambda}_{k+1}&\leq
\sqrt{\frac{32\overline{\alpha}^{2}}{n\alpha^{2}+(n+p)\beta}}\sqrt{\lambda_{1}+c}\sqrt{\lambda_{k+2}+c}\\&\leq
(\overline{\lambda}_{1}+c)\sqrt{\frac{32\overline{\alpha}^{2}C_{0}(n)}{n\alpha^{2}+(n+p)\beta}}(k+1)^{\frac{1}{n}}
\\&=C_{n,M^{n},f}(k+1)^{\frac{1}{n}},\end{aligned}
\end{equation*}
where $$C_{n,M^{n},f}=(\overline{\lambda}_{1}+c)\sqrt{\frac{32\overline{\alpha}^{2}C_{0}(n)}{n\alpha^{2}+(n+p)\beta}},$$ and $C_{0}(n)$ is the same as the one in \eqref{Cheng-Yang-ineq}.
Therefore, we finish the proof of this
theorem.

$$\eqno\Box$$

\begin{rem} For a complete gradient Ricci soliton $M^{n},g,f$, if it is a
minimal submanifold of $\mathbb{R}^{n+p}$, the constant $c$ in the
theorem {\rm\ref{thm-z-3}} will be given by
$$c=\frac{1}{4}\left(n\rho+2\rho\overline{c}-\max_{M^{n}}(2\rho f+R)\right),$$
and
$$\overline{c}=n\rho-2\rho\frac{\int_{\Omega}fe^{-f}dv}{\int_{\Omega}e^{-f}dv}.$$\end{rem}

\begin{rem} The constant $c$, which is appeared in \cite{CZ}, is given by
\begin{equation*}c=\frac{1}{4}\left(n\rho+2\rho\overline{c}
+\inf_{\psi\in\Psi}\max_{M^{n}}(n^{2}H^{2}-2\rho
f-R)\right),\end{equation*} and
\begin{equation*}\overline{c}=\frac{\int_{M^{n}}fe^{-f}dv}{\int_{M^{n}}e^{-f}dv}.\end{equation*} This is because it is not necessary for us to compute the value of $2\Delta f-|\nabla f|^{2}$
but $2|\Delta_{f}f|+|\nabla f|^{2}|$ in proposition \ref{prop4.1}. \end{rem}

\begin{rem}\label{rem4.1}
Assume that $(M^{n},g_{ij},f)$ is a compact, expanding or steady, gradient Ricci soliton, then, the
gradient Ricci solitons is Einstein \cite{Ha3,Iv}, which means that $$2|\Delta_{f}f|+|\nabla f|^{2}=0.$$ Hence, the constant $c$ in the
theorem {\rm \ref{thm5.5}} can be given by
\begin{equation*} c=\frac{1}{4}\inf_{\psi\in \Psi} n^{2}H^{2}.
\end{equation*}
\end{rem}

\begin{rem}\label{rem4.2} We suppose $(M, g)$ is a Sasakian manifold satisfying the gradient Ricci soliton
equation, and then $f$ is a constant function. Therefore, $(M, g)$ is an Einstein manifold \cite{HZ}, which implies
that there dose not exist the compact non-Einstein Ricci soliton in Sasakian manifolds since all compact Ricci solitons are gradient ones from \cite{Pere1}.
For this case, the constant $c$ in the
theorem {\rm \ref{thm5.5}} can be given by
\begin{equation*} c=\frac{1}{4}\inf_{\psi\in \Psi} n^{2}H^{2}.
\end{equation*}
\end{rem}

\begin{rem}\label{rem4.3}
For  a compact shrinking Ricci soliton $(M^{n},g_{ij},f)$ with dimension $n\leq3$, the
gradient Ricci Solitons is Einstein \cite{Ha3,Iv}. Hence, the constant $c$ in the
theorem {\rm \ref{thm5.5}} can be given by
\begin{equation*} c=\frac{1}{4}\inf_{\psi\in \Psi} n^{2}H^{2}.
\end{equation*}
\end{rem}

\begin{rem} If $(M^{n},g,f)$ is a compact Ricci soliton with rigidity, then the constant $c$ in the
theorem {\rm \ref{thm5.5}} can be given by
\begin{equation*} c=\frac{1}{4}\inf_{\psi\in \Psi} n^{2}H^{2}.
\end{equation*}Indeed, since $(M^{n},g,f)$ is a compact Ricci soliton with rigidity, then it is a trivial Ricci soliton which means that $f$ is a constant \cite{ME2}. Therefore, we have
$$2|\Delta_{f}f|+|\nabla f|^{2}=0.$$\end{rem}

\begin{thm}
\label{thm-soliton} Let $(M^{n},g_{ij},f)$ be an $n$-dimensional complete
gradient Ricci Soliton. Then, for any $k$, eigenvalues of the Dirichlet
problem \eqref{Eigenvalue-Problem} of the drifting Laplacian satisfy

\begin{equation}\label{z-3}\begin{aligned}\lambda_{k+1}-\lambda_{k}\leq
C_{n,\Omega,f}k^{\frac{1}{n}},\end{aligned}
\end{equation}
where \begin{equation}\label{c-1}C_{n,\Omega,f}=(\lambda_{1}+c)\sqrt{\frac{32\overline{\alpha}^{2}C_{0}(n)}{n\alpha^{2}+\sum_{j=1}^{n+p}b_{j}}},\end{equation} $C_{0}(n)$ is the same as the one in \eqref{Cheng-Yang-ineq},
  $$c=\frac{1}{4}\inf_{\psi\in\Psi}\max_{\Omega}\left(n^{2}H^{2}+4|\rho f-\rho\overline{c}|
+2\rho f+n\rho-2\rho\overline{c}-S\right),$$ and
\begin{equation*}\overline{c}=\frac{\int_{\Omega}fe^{-f}dv}{\int_{\Omega}e^{-f}dv}.\end{equation*}
\end{thm}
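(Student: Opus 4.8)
The plan is to follow the proof of Theorem \ref{thm1.1} essentially verbatim, viewing this as the Dirichlet-problem analogue of Theorem \ref{thm-z-3}, and to replace the generic estimate of the constant $c$ by the soliton-specific computation of Proposition \ref{prop4.1}. First I would fix an isometric immersion $\psi\in\Psi$ of $(M^{n},g)$ into $\mathbb{R}^{n+p}$ and take as trial functions the rescaled coordinate functions $F_{j}(x)=\alpha_{j}x^{j}$, $1\le j\le n+p$, where $x^{j}$ are the standard coordinates of $\mathbb{R}^{n+p}$. Setting $a_{j}=\sqrt{\|\nabla F_{j}u_{i}\|_{\Omega}^{2}}$ and $b_{j}=\sqrt{\||\nabla F_{j}|^{2}u_{i}\|_{\Omega}^{2}}$, and choosing the coefficients $\alpha_{j}$ so that the cross terms in \eqref{sum-01} and \eqref{sum-0} vanish, I can apply Proposition \ref{prop2.3} with $i=1$ and $\tau=c$.

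Next I would estimate both sides of the inequality in Proposition \ref{prop2.3}. Using the identities \eqref{lemma-3.1-1}--\eqref{lemma-3.1-4} of Lemma \ref{lem3.1}, the sum $\sum_{j}\|2\langle\nabla F_{j},\nabla u_{i}\rangle+u_{i}\Delta_{f}F_{j}\|_{\Omega}^{2}$ collapses, exactly as in the proof of Theorem \ref{thm1.1}, to a quantity bounded by $\overline{\alpha}^{2}\big(4\lambda_{i}+\int_{\Omega}u_{i}^{2}(|\nabla f|^{2}+2|\Delta_{f}f|+n^{2}H^{2})e^{-f}dv\big)$, while the coefficient on the left is bounded below by $\tfrac{1}{2}(n\alpha^{2}+\sum_{j}b_{j})$. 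The one genuinely new, soliton-specific step is to rewrite $2|\Delta_{f}f|+|\nabla f|^{2}$: from the trace of the soliton equation \eqref{soliton-equation}, namely $S+\Delta f=n\rho$, together with the standard identity $S+|\nabla f|^{2}=2\rho f+\widetilde{c}$ and the evaluation of $\widetilde{c}$ by integration against $e^{-f}dv$, one obtains
\begin{equation*}
2|\Delta_{f}f|+|\nabla f|^{2}=4|\rho f-\rho\overline{c}|+2\rho f+n\rho-2\rho\overline{c}-S,
\end{equation*}
which is precisely the integrand defining $c$. This is the content of Proposition \ref{prop4.1}.

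Finally, to turn the gap estimate into the desired $k^{1/n}$ bound, I would combine the resulting inequality
\begin{equation*}
\left(\lambda_{k+2}-\lambda_{k+1}\right)^{2}\leq\frac{32\overline{\alpha}^{2}(\lambda_{k+2}+c)}{n\alpha^{2}+\sum_{j=1}^{n+p}b_{j}}\left(\lambda_{1}+c\right)
\end{equation*}
with the Cheng--Yang type recursion bound $\lambda_{k+2}+c\leq(1+\tfrac{4}{n})(\lambda_{1}+c)(k+1)^{2/n}$, which is the Dirichlet analogue of Lemma \ref{lem4.2} and follows from the Yang-type inequality of Proposition \ref{prop4.1} via the recursion formula of Cheng and Yang recalled in Section \ref{sec3}. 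Taking square roots then gives $\lambda_{k+2}-\lambda_{k+1}\leq C_{n,\Omega,f}(k+1)^{1/n}$ with the stated constant $C_{n,\Omega,f}$ from \eqref{c-1}, and re-indexing $k\mapsto k-1$ yields the claim.

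I expect the only delicate point to be the existence of coefficients $\alpha_{j}$ (equivalently, a suitable rotation of the ambient coordinates) realizing the orthogonality conditions \eqref{sum-01} and \eqref{sum-0} simultaneously; this is a linear-algebra and continuity argument of the kind standard in the Payne--P\'olya--Weinberger circle of ideas, and once it is available the rest is a routine specialization of the argument for Theorem \ref{thm1.1}.
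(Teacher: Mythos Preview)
Your proposal is correct and follows essentially the same approach as the paper, which simply states that the proof is similar to that of Theorem \ref{thm-z-3} and omits the details. You have accurately reconstructed those details: apply Proposition \ref{prop2.3} with the rescaled coordinate trial functions, replace the generic bound on $2|\Delta_{f}f|+|\nabla f|^{2}$ by the soliton identity from (the Dirichlet version of) Proposition \ref{prop4.1}, and close with the Cheng--Yang recursion bound.
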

\begin{proof}The proof is similar to the theorem \ref{thm-z-3}. Thus, we omit it here.\end{proof}

\begin{rem} In theorem \ref{thm-soliton}, we further assume that  $(M^{n},g)$ is an $n$-dimensional complete minimal
submanifold of the $(n+p)$-dimensional Euclidean space $\mathbb{R}^{n+p}$. Then, the mean curvature is
zero and thus it is not difficult to see that the constant $c$ in theorem \ref{thm-soliton} will be
given by
 \begin{equation*}c=\frac{1}{4}\inf_{\psi\in\Psi}\max_{\Omega}\left(4|\rho f-\rho\overline{c}|
+2\rho f+n\rho-2\rho\overline{c}-S\right),\end{equation*}where $$\overline{c}=\frac{\int_{\Omega}fe^{-f}dv}{\int_{\Omega}e^{-f}dv}.$$\end{rem}

\begin{rem}If we assume that $(M^{n},g,f)$ is a steady Ricci soliton, then the constant $c$ in theorem \ref{thm-soliton} is given by
\begin{equation*}c=\frac{1}{4}\inf_{\psi\in\Psi}\max_{M^{n}}\left(n^{2}H^{2}-S\right).\end{equation*}\end{rem}

\begin{thm}{\rm\cite{ME}}Let $(M^{n}, g)$ be an $n$-dimensional complete noncompact gradient shrinking
Ricci soliton whose curvature tensor has at most exponential growth and having Ricci tensor
bounded from below.  Then, for any $k,k=1,2,\cdots$, eigenvalues of the Dirichlet
problem \eqref{Eigenvalue-Problem} of the drifting Laplacian satisfy

\begin{equation}\label{z-3}\begin{aligned}\lambda_{k+1}-\lambda_{k}\leq
C_{n,\Omega,f}k^{\frac{1}{n}},\end{aligned}
\end{equation}
where \begin{equation}\label{c-1}C_{n,\Omega,f}=(\lambda_{1}+c)\sqrt{\frac{32\overline{\alpha}^{2}C_{0}(n)}{n\alpha^{2}+\sum_{j=1}^{n+p}b_{j}}},\end{equation} $C_{0}(n)$
is the same as the one in \eqref{Cheng-Yang-ineq},
   \begin{equation*}c=\frac{1}{4}\inf_{\psi\in\Psi}\max_{\Omega}\left(4|\rho f-\rho\overline{c}|
+2\rho f+n\rho-2\rho\overline{c}-S\right),\end{equation*}where $$\overline{c}=\frac{\int_{\Omega}fe^{-f}dv}{\int_{\Omega}e^{-f}dv}.$$
\end{thm}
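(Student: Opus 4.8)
The plan is to deduce this from the minimal-submanifold case of Theorem \ref{thm-soliton}. The geometric content of the hypotheses is supplied by \cite{ME}: for a complete noncompact gradient shrinking soliton whose curvature grows at most exponentially and whose Ricci tensor is bounded below, $M^n$ carries an isometric \emph{minimal} immersion into some Euclidean space $\mathbb{R}^{n+p}$. Granting this, the mean curvature $H$ vanishes identically, so the term $n^2H^2$ disappears from the constant manufactured in Proposition \ref{prop4.1}; this is precisely why the constant $c$ stated here agrees with the one appearing in the minimal-submanifold remark following Theorem \ref{thm-soliton}. A convenient simplification is that the eigenfunctions $u_i$ and the trial functions built from the ambient coordinate functions all live over the \emph{bounded} domain $\Omega$, so every weighted integral $\|u_i\nabla F_j\|_\Omega^2$, $\||\nabla F_j|^2u_i\|_\Omega^2$ is automatically finite, the Dirichlet spectrum on $\Omega$ is discrete, and noncompactness of $M^n$ introduces no analytic obstruction once the immersion has been secured.

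With $H=0$ fixed I would run the same two-step scheme as in the proofs of Theorems \ref{thm-z-3} and \ref{thm-soliton}. First, the Dirichlet version of Proposition \ref{prop4.1}, in which the soliton identities $S+\Delta f=n\rho$ and $S+|\nabla f|^2=2\rho f+\widetilde c$ are used to rewrite $2|\Delta_f f|+|\nabla f|^2$, yields the Yang-type inequality
\begin{equation*}
\sum_{i=1}^{k}(\lambda_{k+1}-\lambda_i)^2\leq\sum_{i=1}^{k}(\lambda_{k+1}-\lambda_i)(\lambda_i+c),
\end{equation*}
now with the $n^2H^2$ contribution absent from $c$. Setting $\mu_i=\lambda_i+c$ and feeding this into the Cheng--Yang recursion formula exactly as in Lemma \ref{lem4.2} produces the explicit upper bound \eqref{zlz-upper}, namely $\lambda_{k+1}+c\leq(1+\tfrac4n)(\lambda_1+c)k^{2/n}$.

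Second, I would take the scaled coordinate functions $F_j(x)=\alpha_j x^j$, $1\leq j\leq n+p$, of the ambient $\mathbb{R}^{n+p}$ as trial functions and apply Proposition \ref{prop2.3} with $i=1$ and $\tau=c$. Lemma \ref{lem3.1} evaluates the sums $\sum_j|\nabla F_j|^2$ and $\sum_j(\Delta F_j)^2$ (the latter controlled by $\overline{\alpha}^2n^2H^2$, hence zero here) together with the cross terms, which vanish by \eqref{lemma-3.1-4} so that \eqref{sum-01} and \eqref{sum-0} hold; this bounds $\sum_j\|2\langle\nabla F_j,\nabla u_1\rangle+u_1\Delta_f F_j\|_\Omega^2$ by a multiple of $(\lambda_{k+2}+c)\overline{\alpha}^2(\lambda_1+c)$ and gives, as in \eqref{gap1},
\begin{equation*}
(\lambda_{k+2}-\lambda_{k+1})^2\leq\frac{32\overline{\alpha}^2(\lambda_{k+2}+c)}{n\alpha^2+\sum_{j=1}^{n+p}b_j}(\lambda_1+c).
\end{equation*}
Inserting the $k^{2/n}$ bound to control $\lambda_{k+2}+c$ and performing the standard index shift $k\mapsto k-1$ yields $\lambda_{k+1}-\lambda_k\leq C_{n,\Omega,f}\,k^{1/n}$ with the asserted constant \eqref{c-1}. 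I expect the only genuine difficulty to lie in the geometric input rather than the eigenvalue algebra: one must confirm that exponential curvature growth together with a lower Ricci bound really furnishes the minimal isometric immersion of \cite{ME} with finite codimension $p$ (the constant $C_{n,\Omega,f}$ depends on $p$ and on the dilation data $\alpha,\overline{\alpha},b_j$ of that immersion), and that the soliton identities, valid up to the additive constant $\widetilde c$, persist on the noncompact $M^n$ so that $\overline c$ and $c$ are well defined over $\Omega$.
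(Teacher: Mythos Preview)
The paper supplies no separate proof for this theorem; it is stated immediately after the minimal-submanifold remark following Theorem~\ref{thm-soliton}, with the citation \cite{ME} providing the geometric input that removes the $n^2H^2$ term from the constant $c$. Your reconstruction---invoke \cite{ME} under the stated curvature hypotheses to obtain $H=0$, then run the identical two-step argument (Dirichlet Proposition~\ref{prop4.1} plus Cheng--Yang recursion for the $k^{2/n}$ bound, then Proposition~\ref{prop2.3} with the coordinate trial functions $F_j=\alpha_jx^j$ for the gap estimate)---is exactly the implicit logic the paper intends, and your flagging of the geometric input as the only nontrivial point is apt.

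One caution worth recording: the result of \cite{ME} (Fern\'andez-L\'opez and Garc\'{\i}a-R\'{\i}o) is a \emph{rigidity} theorem, not a statement about minimal Euclidean immersions per se. Under their hypotheses (together with harmonic Weyl tensor, which the paper's statement omits) the soliton is isometric to a quotient of $N\times\mathbb{R}^k$ with $N$ Einstein; the disappearance of $n^2H^2$ should then be read through that structural conclusion rather than through a literal minimal immersion. Your closing caveat already anticipates this, so your proposal and the paper are aligned, including in what they leave implicit.
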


Let $(M^{n},g)$ be an $n$-dimensional, complete manifold. Suppose that there exists a smooth function
$f : M\rightarrow \mathbb{R}$ satisfying ${\rm Hess} f=\rho g$, for some constant $\rho\neq0$. Then Riemannian manifold $M^{n}$ is isometric to
$\mathbb{R}^{n}$. Hence, we have

\begin{corr}Let $(M^{n},g,f)$ be an $n$-dimensional, complete gradient Ricci soliton with ${\rm Hess}f=\rho g$.
Assume that $\lambda_{i}$ denotes the $i$-th eigenvalue of Dirichlet problem Dirichlet problem \eqref{Eigenvalue-Problem} of the drifting Laplacian.
Then, for any $k=1,2,\cdots,$ we have

\begin{equation}\label{z-3}\begin{aligned}\lambda_{k+1}-\lambda_{k}\leq
C_{n,\Omega,f}k^{\frac{1}{n}},\end{aligned}
\end{equation}
where \begin{equation}\label{c-1}C_{n,\Omega,f}=(\lambda_{1}+c)\sqrt{\frac{32\overline{\alpha}^{2}C_{0}(n)}{n\alpha^{2}+\sum_{j=1}^{n+p}b_{j}}},\end{equation} $C_{0}(n)$ is the same as the one in \eqref{Cheng-Yang-ineq},where \begin{equation*}c=\frac{1}{4}\left(n\rho+2\rho\overline{c}
-2\min_{\Omega}\rho
f\right),\end{equation*} and
\begin{equation*}\overline{c}=\frac{\int_{\Omega}fe^{-f}dv}{\int_{\Omega}e^{-f}dv}.\end{equation*}

\end{corr}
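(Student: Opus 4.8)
The plan is to read this corollary as the specialization of Theorem \ref{thm-soliton} to a potential satisfying $\mathrm{Hess}\,f=\rho g$, exploiting the rigidity recorded in the remark immediately preceding it. First I would invoke that rigidity result (classically due to Tashiro): since $M^n$ is complete and carries a function $f$ with $\mathrm{Hess}\,f=\rho g$ for a constant $\rho\neq 0$ (which in particular forces $f$ to be nonconstant), the manifold $(M^n,g)$ is isometric to flat $\mathbb{R}^n$. Because the drifting Laplacian $\Delta_f$, and hence the whole Dirichlet problem \eqref{Eigenvalue-Problem}, depends only on the pair $(g,f)$, the eigenvalues $\lambda_i$ are isometry invariants; I may therefore transport the problem to $\mathbb{R}^n$ and realize $M^n$ as a totally geodesic flat $\mathbb{R}^n\subset\mathbb{R}^{n+p}$, for which the mean curvature vanishes, $H\equiv 0$.

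Next I would record the curvature consequences of the hypothesis. Inserting $f_{ij}=\rho g_{ij}$ into the soliton equation \eqref{soliton-equation}, namely $R_{ij}+f_{ij}=\rho g_{ij}$, gives $R_{ij}=0$, so $M^n$ is Ricci flat and in particular $S\equiv 0$, consistent with its being flat $\mathbb{R}^n$. With $H\equiv 0$ and $S\equiv 0$ in hand, the inequality becomes a direct application of Theorem \ref{thm-soliton}, whose gap bound already has exactly the claimed form with $C_{n,\Omega,f}=(\lambda_{1}+c)\sqrt{32\overline{\alpha}^{2}C_{0}(n)/(n\alpha^{2}+\sum_{j=1}^{n+p}b_{j})}$; the only remaining task is to evaluate the constant $c$.

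To identify $c$ I would use the soliton identities from the proof of Proposition \ref{prop4.1}. The condition $\mathrm{Hess}\,f=\rho g$ gives $\Delta f=\mathrm{tr}_g(\rho g)=n\rho$; the auxiliary identity $S+|\nabla f|^{2}=2\rho f+\widetilde{c}$ with $S=0$ yields $|\nabla f|^{2}=2\rho f+\widetilde{c}$ with $\widetilde{c}=n\rho-2\rho\overline{c}$, where $\overline{c}=\int_{\Omega}fe^{-f}dv/\int_{\Omega}e^{-f}dv$. Hence $\Delta_{f}f=\Delta f-|\nabla f|^{2}=2\rho(\overline{c}-f)$, and therefore $2|\Delta_{f}f|+|\nabla f|^{2}=4|\rho f-\rho\overline{c}|+2\rho f+n\rho-2\rho\overline{c}$. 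Feeding $H=0$, $S=0$ and this expression into the formula for $c$ in Theorem \ref{thm-soliton} and taking $\max_{\Omega}$ collapses the bracket to $n\rho+2\rho\overline{c}-2\min_{\Omega}\rho f$, giving $c=\tfrac14(n\rho+2\rho\overline{c}-2\min_{\Omega}\rho f)$ as asserted.

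I expect two points to require care. The first is the reduction step: one must check that realizing $M^n$ as a totally geodesic $\mathbb{R}^n$ legitimately sets $H\equiv 0$ inside the infimum over the family $\Psi$ of isometric immersions, so that the infimum is attained there. The second, and more delicate, is the evaluation of $\max_{\Omega}\left(4|\rho f-\rho\overline{c}|+2\rho f\right)$: writing $t=\rho f$ and $m=\rho\overline{c}$, the integrand $4|t-m|+2t$ is piecewise linear in $t$ with slope $+6$ for $t>m$ and slope $-2$ for $t<m$, so its maximum on $\Omega$ is attained at an extreme value of $\rho f$, and pinning down precisely the stated $c$ amounts to verifying that this maximum is governed by the $t=\min_{\Omega}\rho f$ branch. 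This sign bookkeeping is routine but is exactly where the explicit form of the constant is fixed.
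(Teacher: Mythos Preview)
Your proposal is correct and follows essentially the same route as the paper's proof: both first compute $\Delta f=n\rho$, $|\nabla f|^{2}=2\rho f+\widetilde{c}$ (hence $\Delta_f f=2\rho(\overline{c}-f)$), then invoke the rigidity $M^n\cong\mathbb{R}^n$ (the paper cites \cite{PRS}, you cite Tashiro) to kill the $n^2H^2$ term, and finally feed everything into the Yang-type inequality plus Cheng--Yang recursion machinery underlying Theorem~\ref{thm-soliton}/\ref{thm-z-3}. The only cosmetic difference is that you invoke Theorem~\ref{thm-soliton} as a black box, whereas the paper rederives the Yang-type inequality and recursion bound in the proof; your version is a cleaner packaging of the same argument. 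Your caveat about which branch of $4|\rho f-\rho\overline{c}|+2\rho f$ realises the maximum is well taken---the paper simply asserts the stated form of $c$ without that bookkeeping.
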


\begin{proof}If ${\rm Hess}f =\rho g$, then we have  \begin{equation}
\label{4.22}\Delta f=n\rho,\end{equation} and

\begin{equation}
\label{4.23}|\nabla f|^{2}=2\rho f+\widetilde{c},
\end{equation}
where
$\widetilde{c}$ is a constant defined by

\begin{equation*}\widetilde{c}=n\rho-2\rho\frac{\int_{\Omega}fe^{-f}dv}{\int_{\Omega}e^{-f}dv}.\end{equation*}
It follows from \eqref{4.22} and \eqref{4.23} that
\begin{equation}\label{4.24}\begin{aligned}&\int_{\Omega}u_{i}^{2}(2|\Delta_{f} f|+|\nabla
f|^{2})e^{-f}dv\\&=2\int_{\Omega}u_{i}^{2}\left|2\rho\frac{\int_{\Omega}fe^{-f}dv}{\int_{\Omega}e^{-f}dv}-2\rho f\right|e^{-f}dv
+2\rho \int_{\Omega}u_{i}^{2}fe^{-f}dv+n\rho-2\rho\frac{\int_{\Omega}fe^{-f}dv}{\int_{\Omega}e^{-f}dv}
\\&=4\int_{\Omega}u_{i}^{2}\left|\rho\overline{c}-\rho f\right|e^{-f}dv
+2\rho \int_{\Omega}u_{i}^{2}fe^{-f}dv+n\rho-2\rho\overline{c},\end{aligned}
\end{equation}where $$\overline{c}=\frac{\int_{\Omega}fe^{-f}dv}{\int_{\Omega}e^{-f}dv}.$$
Hence, by the same method as the proof of proposition \ref{prop4.1}, we have

\begin{equation*}
\begin{aligned}
\sum^{k}_{i=1}(\lambda_{k+1}-\lambda_{i})^{2}
\leq\frac{4}{n}\sum^{k}_{i=1}(\lambda_{k+1}-\lambda_{i})
\left(\lambda_{i}+c\right),
\end{aligned}
\end{equation*}where \begin{equation*}c=\frac{1}{4}\min_{\psi\in\Psi}\max_{\Omega}\left\{n^{2}H^{2}+4\int_{\Omega}u_{i}^{2}\left|\rho\overline{c}-\rho f\right|e^{-f}dv
+2\rho \int_{\Omega}u_{i}^{2}fe^{-f}dv+n\rho-2\rho\overline{c}\right\},\end{equation*}and  $$\overline{c}=\frac{\int_{\Omega}fe^{-f}dv}{\int_{\Omega}e^{-f}dv}.$$
By the recursion formula of Cheng and Yang, we get
\begin{equation*}
\lambda_{k+1}+ c
\leq (1+\frac {4}{n})(\lambda_{1}+c) \ k^{2/n}.
\end{equation*}
By the argument in \cite{PRS}, we know that $M^{n}$ is isometric to
$\mathbb{R}^{n}$. Therefore, by the same method as in the proof of theorem \ref{thm-z-3}, we can get
\begin{equation*}\begin{aligned}\lambda_{k+1}-\lambda_{k}\leq
C_{n,\Omega,f}k^{\frac{1}{n}},\end{aligned}
\end{equation*}
where \begin{equation*}C_{n,\Omega,f}=(\lambda_{1}+c)\sqrt{\frac{32\overline{\alpha}^{2}C_{0}(n)}{n\alpha^{2}+\sum_{j=1}^{n+p}b_{j}}},\end{equation*} $C_{0}(n)$
is the same as the one in \eqref{Cheng-Yang-ineq}.
This finishes the proof of this corollary.
\end{proof}

\begin{corr}Let $(M^{n},g,f)$ be a complete, expanding, Ricci soliton.  If the scalar
curvature $S\geq 0$ and $S\in L^{1}(M^{n},e^{-f}dv)$. Assume that $\lambda_{i}$ denotes the $i$-th eigenvalue of Dirichlet problem Dirichlet
problem \eqref{Eigenvalue-Problem} of the drifting Laplacian.
Then, for any $k=1,2,\cdots,$ one has

\begin{equation}\label{z-31}\begin{aligned}\lambda_{k+1}-\lambda_{k}\leq
C_{n,\Omega,f}k^{\frac{1}{n}},\end{aligned}
\end{equation}
where \begin{equation}\label{c-1}C_{n,\Omega,f}=(\lambda_{1}+c)\sqrt{\frac{32\overline{\alpha}^{2}C_{0}(n)}{n\alpha^{2}+\sum_{j=1}^{n+p}b_{j}}},\end{equation} $C_{0}(n)$ is the same as the one in \eqref{Cheng-Yang-ineq},
where
\begin{equation*}c=\frac{1}{4}\left(n\rho+2\rho\overline{c}
-2\rho\max_{\Omega}
f\right),\end{equation*}
and
\begin{equation*}\overline{c}=\frac{\int_{\Omega}fe^{-f}dv}{\int_{\Omega}e^{-f}dv}.\end{equation*}

\end{corr}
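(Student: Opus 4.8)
The plan is to situate this corollary inside the framework already built for gradient Ricci solitons in Theorem~\ref{thm-soliton}; the only genuinely new content is to specialize the universal constant $c$ to the expanding regime under the hypotheses $\rho<0$, $S\geq0$ and $S\in L^{1}(M^{n},e^{-f}dv)$, and then to rerun the trial-function argument of Theorem~\ref{thm1.1}. First I would record the soliton identities: tracing \eqref{soliton-equation} gives \eqref{4.1}, the contracted second Bianchi identity gives \eqref{4.2}, and subtracting yields $\Delta_{f}f=n\rho-2\rho f-\widetilde{c}$ as in \eqref{4.3}, with $\widetilde{c}=n\rho-2\rho\overline{c}$ as in \eqref{4.4}. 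Substituting this value of $\widetilde{c}$ rewrites $\Delta_{f}f$ as $2\rho(\overline{c}-f)$ and $|\nabla f|^{2}$ as $2\rho f+n\rho-2\rho\overline{c}-S$.

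The crux is to control $2|\Delta_{f}f|+|\nabla f|^{2}$, which is exactly the quantity entering the constant $c$ of Theorem~\ref{thm-soliton}. Since $S\geq0$, the term $-S$ may be discarded when passing to an upper bound, so it suffices to estimate $2\bigl|2\rho(\overline{c}-f)\bigr|+2\rho f+n\rho-2\rho\overline{c}$. Using $\rho<0$ to resolve the absolute value and then maximizing over $\Omega$, the extremum in the relevant sign regime is attained where $f=\max_{\Omega}f$, and the expression collapses to $n\rho+2\rho\overline{c}-2\rho\max_{\Omega}f=4c$. This is the step I expect to be delicate: one must track the sign of $\overline{c}-f$ and argue that under $S\geq0$ and $\rho<0$ the dominant contribution is the stated term; moreover the completeness of $M^{n}$ together with $S\in L^{1}(M^{n},e^{-f}dv)$ is precisely what is needed to guarantee that the weighted integrals defining $\overline{c}$ and the constant $\widetilde{c}$ are finite, so that the relation \eqref{4.4} is legitimate in the noncompact setting.

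With $c$ identified, I would feed the bound into the Yang-type inequality of Proposition~\ref{prop4.1}, obtaining $\sum_{i=1}^{k}(\lambda_{k+1}-\lambda_{i})^{2}\leq\tfrac4n\sum_{i=1}^{k}(\lambda_{k+1}-\lambda_{i})(\lambda_{i}+c)$, and then, exactly as in Lemma~\ref{lem4.2}, the recursion formula of Cheng and Yang upgrades this to $\lambda_{k+1}+c\leq(1+\tfrac4n)(\lambda_{1}+c)k^{2/n}$. Finally I would run the trial-function scheme of Theorem~\ref{thm1.1}: take $F_{j}(x)=\alpha_{j}x^{j}$ for the coordinate functions of an isometric immersion into $\mathbb{R}^{n+p}$, use the identities of Lemma~\ref{lem3.1} to estimate $\sum_{j}\|2\langle\nabla F_{j},\nabla u_{i}\rangle+u_{i}\Delta_{f}F_{j}\|_{\Omega}^{2}$, and apply Proposition~\ref{prop2.3} with $i=1$ and $\tau=c$. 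This yields $(\lambda_{k+2}-\lambda_{k+1})^{2}\leq\frac{32\overline{\alpha}^{2}(\lambda_{k+2}+c)}{n\alpha^{2}+\sum_{j}b_{j}}(\lambda_{1}+c)$, and combining it with the recursion bound on $\lambda_{k+2}+c$ and reindexing gives \eqref{z-31} with the asserted $C_{n,\Omega,f}$. The remaining analytic machinery is identical to that of Theorems~\ref{thm1.1} and \ref{thm-z-3}, so no new ideas are required beyond the constant-simplification step flagged above.
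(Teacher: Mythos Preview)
Your proposal misses the decisive step in the paper's argument. The hypotheses $S\geq0$ and $S\in L^{1}(M^{n},e^{-f}dv)$ are not there to guarantee integrability of $\overline{c}$: the integral defining $\overline{c}$ is taken over the \emph{bounded} domain $\Omega$, so it is automatically finite regardless of any global summability of $S$. The actual role of these hypotheses, together with $\rho<0$, is to invoke the rigidity theorem of Pigola--Rimoldi--Setti \cite{PRS}, which forces the soliton to be isometric to standard Euclidean space $\mathbb{R}^{n}$. This is exactly how the paper proceeds: once $M^{n}\cong\mathbb{R}^{n}$, the canonical embedding has $H\equiv0$, the term $n^{2}H^{2}$ disappears from the constant, and eigenvalue invariance under isometries transfers the whole estimate to the Euclidean model.

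Without this rigidity step your argument does not close. The expression you bound is
\[
2|\Delta_{f}f|+|\nabla f|^{2}+n^{2}H^{2},
\]
and nothing in your sketch eliminates the $n^{2}H^{2}$ contribution or removes the infimum over isometric immersions; yet the constant $c$ in the statement contains neither. Your sign analysis of the absolute value is also incomplete: depending on whether $f\gtrless\overline{c}$ on parts of $\Omega$ the quantity $4|\rho(\overline{c}-f)|+2\rho f+n\rho-2\rho\overline{c}$ takes two different linear forms in $f$, and you have not argued that the branch leading to $n\rho+2\rho\overline{c}-2\rho\max_{\Omega}f$ dominates. Once the soliton is known to be $\mathbb{R}^{n}$ these issues become manageable (and $H=0$), but as written you have not used the one hypothesis that makes the corollary work.
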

\begin{proof}Since  the scalar curvature $S\geq 0$, we have

\begin{equation*}\begin{aligned}\frac{1}{4}\inf_{\psi\in\Psi}\max_{\Omega}&\left(n^{2}H^{2}+4|\rho f-\rho\overline{c}|
+2\rho f+n\rho-2\rho\overline{c}-S\right)\\&\leq\frac{1}{4}\inf_{\psi\in\Psi}\max_{\Omega}\left(n^{2}H^{2}+4|\rho f-\rho\overline{c}|
+2\rho f+n\rho-2\rho\overline{c}\right),\end{aligned}\end{equation*}
where
\begin{equation*}\overline{c}=\frac{\int_{\Omega}fe^{-f}dv}{\int_{\Omega}e^{-f}dv}.\end{equation*}
By the assumption of this corollary, we know that $M^{n}$ is isometric to the standard
Euclidean space \cite{PRS}. Since $\rho<0$ (i.e., $(M^{n},g,f)$ is an expanding Ricci soliton) and eigenvalues is invariant in the sense of isometries, we have the following eigenvalue inequality:

\begin{equation}\label{z-3}\begin{aligned}\lambda_{k+1}-\lambda_{k}\leq
C_{n,\Omega,f}k^{\frac{1}{n}},\end{aligned}
\end{equation}
where \begin{equation}\label{c-1}C_{n,\Omega,f}=(\lambda_{1}+c)\sqrt{\frac{32\overline{\alpha}^{2}C_{0}(n)}{n\alpha^{2}+\sum_{j=1}^{n+p}b_{j}}},\end{equation} $C_{0}(n)$ is the same as the one in \eqref{Cheng-Yang-ineq},where
\begin{equation*}c=-\frac{\rho}{4}\inf_{\psi\in\Psi}\max_{\Omega}\left(4|f-\overline{c}|
-2f-n+2\overline{c}\right),\end{equation*}where
\begin{equation*}\overline{c}=\frac{\int_{\Omega}fe^{-f}dv}{\int_{\Omega}e^{-f}dv}.\end{equation*}
Thus, it completes the proof of this corollary.\end{proof}
Let $S_{\ast}=\inf_{M^{n}}S.$  Assume that $(M^{n},g,f)$ is a geodesically complete expanding gradient Ricci soliton, then we have

\begin{corr}Let $(M^{n},g,f)$ be an dimensional, geodesically complete, expanding gradient Ricci soliton. If
$S_{\ast}\in(-\infty,n\rho)\cup(0,+\infty)$ or $S(x)\leq n\rho$. Assume that $\lambda_{i}$ denotes the $i$-th eigenvalue of
Dirichlet problem Dirichlet problem \eqref{Eigenvalue-Problem} of the drifting Laplacian.
Then, for any $k=1,2,\cdots,$ one has

\begin{equation}\label{z-4-3}\begin{aligned}\lambda_{k+1}-\lambda_{k}\leq
C_{n,\Omega}k^{\frac{1}{n}},\end{aligned}
\end{equation}
where \begin{equation*}\begin{aligned}c=\frac{1}{4}\inf_{\psi\in\Psi}\max_{\Omega}(n^{2}H^{2}),\end{aligned}\end{equation*}\begin{equation*}\label{c-1}C_{n,\Omega}=(\lambda_{1}+c)\sqrt{\frac{32\overline{\alpha}^{2}C_{0}(n)}{n\alpha^{2}+\sum_{j=1}^{n+p}b_{j}}},\end{equation*} $C_{0}(n)$ is the same as the one in \eqref{Cheng-Yang-ineq}.

\end{corr}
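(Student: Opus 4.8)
The plan is to derive this as a specialization of Theorem \ref{thm-soliton}, the only genuinely new ingredient being a rigidity argument that removes the potential from the gap coefficient. By Theorem \ref{thm-soliton} applied to the expanding soliton $(M^{n},g,f)$, the gap estimate $\lambda_{k+1}-\lambda_{k}\leq C_{n,\Omega,f}k^{1/n}$ already holds with
\begin{equation*}
c=\frac{1}{4}\inf_{\psi\in\Psi}\max_{\Omega}\left(n^{2}H^{2}+4|\rho f-\rho\overline{c}|+2\rho f+n\rho-2\rho\overline{c}-S\right),
\end{equation*}
so it remains only to show that, under either scalar-curvature hypothesis, the bracketed soliton terms drop out and $c$ reduces to $\frac{1}{4}\inf_{\psi\in\Psi}\max_{\Omega}(n^{2}H^{2})$.

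First I would recall, from the proof of Proposition \ref{prop4.1} and in particular the identity \eqref{z-4.7}, that for a gradient Ricci soliton
\begin{equation*}
4|\rho f-\rho\overline{c}|+2\rho f+n\rho-2\rho\overline{c}-S=2|\Delta_{f}f|+|\nabla f|^{2},
\end{equation*}
so that the potential's contribution to $c$ is governed precisely by $2|\Delta_{f}f|+|\nabla f|^{2}$. Next I would invoke the rigidity of complete expanding gradient Ricci solitons: each of the conditions $S_{\ast}\in(-\infty,n\rho)\cup(0,+\infty)$ and $S(x)\leq n\rho$ places $(M^{n},g,f)$ within the scope of the rigidity results of \cite{PRS}, forcing the scalar curvature to be constant and hence the soliton to be Einstein. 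As already observed in Remark \ref{rem4.1}, an Einstein soliton satisfies $2|\Delta_{f}f|+|\nabla f|^{2}=0$, whence the bracket above vanishes pointwise and $c=\frac{1}{4}\inf_{\psi\in\Psi}\max_{\Omega}(n^{2}H^{2})$.

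Substituting this value of $c$ into the conclusion of Theorem \ref{thm-soliton} then yields
\begin{equation*}
\lambda_{k+1}-\lambda_{k}\leq(\lambda_{1}+c)\sqrt{\frac{32\overline{\alpha}^{2}C_{0}(n)}{n\alpha^{2}+\sum_{j=1}^{n+p}b_{j}}}\,k^{\frac{1}{n}}=C_{n,\Omega}k^{\frac{1}{n}},
\end{equation*}
with $C_{0}(n)$ as in \eqref{Cheng-Yang-ineq}, which is exactly the asserted inequality. The main obstacle is the rigidity step: one must verify that the two curvature conditions really do force constant scalar curvature, and this is where the genuine analysis sits, resting on the sharp lower bound for the scalar curvature of complete expanding solitons together with the maximum-principle and Liouville-type arguments of \cite{PRS}. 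Once rigidity is in hand, the remaining computation is a direct transcription of the proofs of Theorem \ref{thm-z-3} and Theorem \ref{thm-soliton}, via Proposition \ref{prop2.3} and Lemma \ref{lem3.1} with $F_{j}(x)=\alpha_{j}x^{j}$, and involves no new difficulty.
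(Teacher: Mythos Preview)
Your proposal is correct and follows essentially the same route as the paper: both arguments hinge on the rigidity result (from \cite{PRS}) that the stated scalar-curvature hypotheses force the expanding soliton to be trivial/Einstein, whence $|\nabla f|=0$ and $\Delta f=0$, so the potential terms drop out of $c$. The only cosmetic difference is that the paper substitutes $|\nabla f|=0$, $\Delta f=0$ directly into the general formula \eqref{general-formula-2}, whereas you route the same information through the already-established Theorem~\ref{thm-soliton}; the content is identical.
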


\begin{proof}Under the assumption of this corollary, and $S_{\ast}\in(-\infty,n\rho)\cup(0,+\infty)$ or $S(x)\leq n\rho$ is Einstein and the soliton is trivial.  Consequently, we have

\begin{equation}\label{f=const-1}|\nabla f|=0,\end{equation} and

\begin{equation}\label{f=const-2}\Delta f=0.\end{equation}
Furthermore, substituting \eqref{f=const-1} and \eqref{f=const-2} into \eqref{general-formula-2}, we obtain \eqref{z-4-3}.
This completes the proof of this corollary.
\end{proof}
Suppose that $(M^{n},g,f)$ is a complete shrinking Ricci soliton, then $S(x)>0$ on $M^{n}$ unless
$S(x) \equiv 0$ on $M^{n}$, and $M^{n}$ is isometric to $\mathbb{R}^{n}$ (see \cite{PRS}).  It is well known that eigenvalues is invariant in the sense of isometries,
therefore, we prove the following:

\begin{corr}Let $(M^{n},g,f)$ be an $n$-dimensional, complete shrinking Ricci soliton with scalar curvature function  $S(x)\leq0$ on $M^{n}$.
Assume that $\lambda_{i}$ denotes the $i$-th eigenvalue of Dirichlet problem Dirichlet problem \eqref{Eigenvalue-Problem} of the drifting Laplacian.
Then, for any $k=1,2,\cdots,$ one has

\begin{equation}\label{z-4-5}\begin{aligned}\lambda_{k+1}-\lambda_{k}\leq
C_{n,\Omega,f}k^{\frac{1}{n}},\end{aligned}
\end{equation}
where\begin{equation*}\label{c-1}C_{n,\Omega,f}=(\lambda_{1}+c)\sqrt{\frac{32\overline{\alpha}^{2}C_{0}(n)}{n\alpha^{2}+\sum_{j=1}^{n+p}b_{j}}},\end{equation*}
$C_{0}(n)$ is the same as the one in \eqref{Cheng-Yang-ineq}, and
\begin{equation*}c=\frac{1}{4}\max_{\Omega}\left(4|\rho f-\rho\overline{c}|
+2\rho f+n\rho-2\rho\overline{c}\right),\end{equation*}
and $$c=\frac{\int_{\Omega}fe^{-f}dv}{\int_{\Omega}e^{-f}dv}.$$
\end{corr}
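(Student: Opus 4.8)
The plan is to show that the sign hypothesis $S(x)\leq 0$ forces the soliton to be trivial, and then to inherit the estimate directly from Theorem \ref{thm-soliton}. First I would recall the structural dichotomy for complete gradient shrinking Ricci solitons established in \cite{PRS}, which is precisely the statement quoted just before this corollary: the scalar curvature obeys $S(x)>0$ everywhere on $M^{n}$ unless $S\equiv 0$, and in the latter case $(M^{n},g)$ is isometric to the flat Euclidean space $\mathbb{R}^{n}$. Since we assume $S(x)\leq 0$ on all of $M^{n}$, the strictly positive alternative is impossible, so necessarily $S\equiv 0$ and $(M^{n},g)$ is isometric to $\mathbb{R}^{n}$.

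Next I would use the fact that the eigenvalues of the Dirichlet problem \eqref{Eigenvalue-Problem} are invariant under isometries. Transporting the problem to $\mathbb{R}^{n}$ and realizing $\mathbb{R}^{n}$ as the standard totally geodesic, hence minimal, subspace $\mathbb{R}^{n}\subset\mathbb{R}^{n+p}$, the mean curvature vanishes, i.e. $H\equiv 0$. With this isometric immersion at hand the hypotheses of Theorem \ref{thm-soliton} are satisfied, and applying that theorem verbatim yields
\begin{equation*}\lambda_{k+1}-\lambda_{k}\leq C_{n,\Omega,f}k^{\frac{1}{n}},\end{equation*}
with $C_{n,\Omega,f}=(\lambda_{1}+c)\sqrt{\frac{32\overline{\alpha}^{2}C_{0}(n)}{n\alpha^{2}+\sum_{j=1}^{n+p}b_{j}}}$ and $C_{0}(n)$ as in \eqref{Cheng-Yang-ineq}.

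It then remains only to simplify the constant $c$ furnished by Theorem \ref{thm-soliton}, namely $c=\frac{1}{4}\inf_{\psi\in\Psi}\max_{\Omega}(n^{2}H^{2}+4|\rho f-\rho\overline{c}|+2\rho f+n\rho-2\rho\overline{c}-S)$. Under the two reductions above, the term $-S$ disappears because $S\equiv 0$, while the curvature term $n^{2}H^{2}$ disappears for the flat immersion, which in fact realizes the infimum over $\Psi$ since the remaining terms depend only on $f$, $\rho$, $n$ and $\overline{c}$ and not on the immersion. Consequently $c$ reduces to $\frac{1}{4}\max_{\Omega}(4|\rho f-\rho\overline{c}|+2\rho f+n\rho-2\rho\overline{c})$, with $\overline{c}=\frac{\int_{\Omega}fe^{-f}dv}{\int_{\Omega}e^{-f}dv}$, which is exactly the constant in the statement.

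The main obstacle here is conceptual rather than computational: one must recognize that the condition $S\leq 0$ is incompatible with the strict-positivity branch of the shrinking-soliton dichotomy, thereby forcing $S\equiv 0$ together with the rigidity $(M^{n},g)\cong\mathbb{R}^{n}$. Once this rigidity is secured no genuine analytic work is left, since the gap estimate is inherited directly from Theorem \ref{thm-soliton} and only the vanishing of the two curvature-type contributions to $c$ needs to be observed.
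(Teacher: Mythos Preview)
Your proposal is correct and follows essentially the same route as the paper: the sentence immediately preceding the corollary invokes the \cite{PRS} dichotomy to force $S\equiv 0$ and $(M^{n},g)\cong\mathbb{R}^{n}$, then appeals to isometry invariance of eigenvalues; your argument fleshes this out by applying Theorem \ref{thm-soliton} and observing that $n^{2}H^{2}$ and $-S$ both vanish in the constant $c$. No additional ideas are needed.
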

Let $(M^{n},g,f)$ be complete,
 gradient (or expanding) Ricci soliton with $|\nabla f|\in
L^{p}(M^{n},e^{-f}dv)$, where $1\leq p\leq+\infty$, then, we have $\nabla f=0$ \cite{PRS}.   Then, one can easily prove the following:

\begin{corr}Let $(M^{n},g,f)$ be a complete gradient shrinking (or expanding) Ricci soliton with nonnegative Ricci curvature,
and contains a line. Assume that  where $|\nabla f|\in
L^{p}(M^{n},e^{-f}dv) $, and $1\leq p\leq+\infty$.
Assume that $\lambda_{i}$ denotes the $i$-th eigenvalue of Dirichlet problem Dirichlet problem \eqref{Eigenvalue-Problem} of the drifting Laplacian.
Then, for any $k=1,2,\cdots,$ one has

\begin{equation}\label{z-4-4}\begin{aligned}\lambda_{k+1}-\lambda_{k}\leq
C_{n,\Omega}k^{\frac{1}{n}},\end{aligned}
\end{equation}
where \begin{equation*}\begin{aligned}c=\frac{1}{4}\inf_{\psi\in\Psi}\max_{\Omega}(n^{2}H^{2}),\end{aligned}\end{equation*}\begin{equation*}\label{c-1}C_{n,\Omega}=(\lambda_{1}+c)\sqrt{\frac{32\overline{\alpha}^{2}C_{0}(n)}{n\alpha^{2}+\sum_{j=1}^{n+p}b_{j}}},\end{equation*} $C_{0}(n)$ is the same as the one in \eqref{Cheng-Yang-ineq}.

\end{corr}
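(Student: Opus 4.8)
The plan is to reduce the whole statement to the constant-weight (Einstein) case by invoking the vanishing result for $\nabla f$ recorded just above the corollary. First I would apply the Pigola--Rimoldi--Setti theorem \cite{PRS}: since $(M^{n},g,f)$ is a complete gradient shrinking (or expanding) Ricci soliton with $|\nabla f|\in L^{p}(M^{n},e^{-f}dv)$ for some $1\le p\le+\infty$, one obtains $\nabla f\equiv 0$, so that $f$ is constant on $M^{n}$. The hypotheses of nonnegative Ricci curvature and the existence of a line serve to place us squarely within the setting where this vanishing conclusion is available (and, via Cheeger--Gromoll splitting, identify the underlying geometry), but the single fact I need to carry forward is that $f$ is a constant.

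Once $f$ is constant the remaining work is purely algebraic. I would note that $|\nabla f|^{2}=0$ and $\Delta_{f}f=0$, while the soliton equation \eqref{soliton-equation} degenerates to $R_{ij}=\rho g_{ij}$; thus $M^{n}$ is Einstein, and tracing yields $S=n\rho$. Furthermore $\overline{c}=\frac{\int_{\Omega}fe^{-f}dv}{\int_{\Omega}e^{-f}dv}=f$, so that $|\rho f-\rho\overline{c}|=0$ and $2\rho f+n\rho-2\rho\overline{c}-S=n\rho-S=0$. Substituting these into the constant $c$ from Theorem \ref{thm-soliton} makes every $f$-dependent term and the curvature term $n\rho-S$ drop out:
\[
c=\frac{1}{4}\inf_{\psi\in\Psi}\max_{\Omega}\left(n^{2}H^{2}+4|\rho f-\rho\overline{c}|+2\rho f+n\rho-2\rho\overline{c}-S\right)=\frac{1}{4}\inf_{\psi\in\Psi}\max_{\Omega}(n^{2}H^{2}),
\]
which is exactly the constant asserted in the statement.

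With this reduced value of $c$ in hand, I would simply appeal to Theorem \ref{thm-soliton}, which already gives the gap estimate $\lambda_{k+1}-\lambda_{k}\le C_{n,\Omega}k^{\frac{1}{n}}$ for complete gradient Ricci solitons with $C_{n,\Omega}=(\lambda_{1}+c)\sqrt{\frac{32\overline{\alpha}^{2}C_{0}(n)}{n\alpha^{2}+\sum_{j=1}^{n+p}b_{j}}}$ and $C_{0}(n)$ as in \eqref{Cheng-Yang-ineq}. Feeding in the simplified $c$ produces precisely the claimed inequality, completing the argument.

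The genuine content of the corollary is concentrated entirely in the imported vanishing $\nabla f\equiv 0$ of \cite{PRS}; everything afterward is a mechanical specialization of the previously established Ricci-soliton gap theorem. Accordingly, I do not anticipate a real obstacle. The one point deserving care is verifying that, once $f$ becomes constant, the soliton is necessarily Einstein, so that $n\rho-S=0$ and no residual scalar-curvature term survives in $c$; this is what guarantees that $c$ collapses cleanly to $\tfrac14\inf_{\psi}\max_{\Omega}(n^{2}H^{2})$ rather than retaining a curvature contribution.
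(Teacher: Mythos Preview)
Your proposal is correct and follows essentially the same approach as the paper: the paper simply records, immediately before the corollary, that the Pigola--Rimoldi--Setti result \cite{PRS} forces $\nabla f=0$ under the $L^{p}$ hypothesis, and then declares the corollary an easy consequence. Your argument fills in precisely the details the paper omits---that constancy of $f$ makes $|\nabla f|^{2}$, $\Delta_{f}f$, and (via the Einstein condition) $n\rho-S$ all vanish, collapsing $c$ to $\tfrac{1}{4}\inf_{\psi}\max_{\Omega}(n^{2}H^{2})$---and then invokes Theorem~\ref{thm-soliton}.
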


The following theorem is to give an intrinsic eigenvalue inequality of drifting Laplcian on Ricci solitons.
\begin{thm}\label{thm2-z}Let $(M^{n},g,f)$ be a complete gradient shrinking Ricci soliton with nonnegative Ricci curvature,
and contains a line. Assume that $\lambda_{i}$ is the $i$-th
$(i=1,2,\cdots,k)$ eigenvalue of the Dirichlet problem \eqref{Eigenvalue-Problem}, then we have

\begin{equation*}\begin{aligned}\lambda_{k+1}-\lambda_{k}\leq4\sqrt{C_{0}(1)}\left(\lambda_{1}+c\right)k,
\end{aligned}\end{equation*}where $C_{0}(1)$ is the case that $n=1$ in \eqref{Cheng-Yang-ineq},
$\kappa=\frac{1}{2}Diameter(\Omega)+2\min_{x\in\Omega}\sqrt{f(x)}+\overline{c},$$$c=\frac{1}{4}\left(\frac{\rho}{2}\kappa^{2}+2\sqrt{2\rho}\kappa\lambda_{i}^{\frac{1}{2}}\right),$$
and
$$\overline{c}=n\rho-2\rho\frac{\int_{\Omega}fe^{-f}dv}{\int_{\Omega}e^{-f}dv}.$$
\end{thm}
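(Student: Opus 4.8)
The plan is to use the rigidity of the soliton to reduce the estimate to an effectively one-dimensional problem. Since $(M^{n},g,f)$ has $\mathrm{Ric}\geq0$ and contains a line, the Cheeger--Gromoll splitting theorem gives an isometric product $M^{n}=\mathbb{R}\times N^{n-1}$; restricting the soliton equation \eqref{soliton-equation} to the flat line direction $\partial_{t}$ forces $f_{tt}=\rho$ and kills the mixed Hessian terms, so after translating the coordinate $t$ along the line one has $f=\frac{\rho}{2}t^{2}+f_{N}(y)$, i.e. the $\mathbb{R}$-factor is a Gaussian shrinking soliton. Consequently, along the line, $|\nabla t|\equiv1$, $\Delta t=0$ and $\langle\nabla f,\nabla t\rangle=\rho t$, whence $\Delta_{f}t=-\rho t$. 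Because eigenvalues are invariant under isometries, I would work in this product and use $t$ as the single trial function.

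First I would apply Proposition \ref{prop2.3} with $l=1$, $F_{1}=t$, $i=1$ and $\tau=c$. The normalization $\int_{\Omega}u_{1}^{2}e^{-f}dv=1$ and $|\nabla t|\equiv1$ give $a_{1}=b_{1}=1$, so $\frac{a_{1}^{2}+b_{1}}{2}=1$; this is precisely where the effective dimension collapses to one. The proposition then yields
\begin{equation*}
(\lambda_{k+2}-\lambda_{k+1})^{2}\leq 4(\lambda_{k+2}+c)\,\|2\partial_{t}u_{1}-\rho t\,u_{1}\|_{\Omega}^{2}.
\end{equation*}

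The technical heart is to bound the last factor by $4(\lambda_{1}+c)$. By the triangle inequality in the weighted $L^{2}$-norm and the identity $\|\nabla u_{1}\|_{\Omega}^{2}=\lambda_{1}$ (from \eqref{self-adjiont} and the eigenvalue equation),
\begin{equation*}
\|2\partial_{t}u_{1}-\rho t\,u_{1}\|_{\Omega}\leq 2\|\partial_{t}u_{1}\|_{\Omega}+\rho\max_{\Omega}|t|\leq 2\sqrt{\lambda_{1}}+\rho\max_{\Omega}|t|.
\end{equation*}
To estimate $\rho|t|$ on $\Omega$ I would combine the soliton identities \eqref{4.1} and \eqref{4.2}, which give $|\nabla f|^{2}=2\rho f+\overline{c}-S$ with $\overline{c}$ the constant of the statement, the curvature assumption $S\geq0$, and $\rho^{2}t^{2}\leq|\nabla f|^{2}$, to control $|t|$ pointwise by $\sqrt{f}$; using in addition that $t$ is $1$-Lipschitz, so that its oscillation over $\Omega$ is at most $\mathrm{Diameter}(\Omega)$, produces a bound of the form $\rho\max_{\Omega}|t|\leq\sqrt{\rho/2}\,\kappa$ with $\kappa=\frac{1}{2}\mathrm{Diameter}(\Omega)+2\min_{x\in\Omega}\sqrt{f(x)}+\overline{c}$. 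Squaring $\|2\partial_{t}u_{1}-\rho t\,u_{1}\|_{\Omega}\leq 2\sqrt{\lambda_{1}}+\sqrt{\rho/2}\,\kappa$ and expanding gives exactly $4(\lambda_{1}+c)$ with $c=\frac{1}{4}\big(\frac{\rho}{2}\kappa^{2}+2\sqrt{2\rho}\,\kappa\,\lambda_{1}^{1/2}\big)$, so that $(\lambda_{k+2}-\lambda_{k+1})^{2}\leq 16(\lambda_{k+2}+c)(\lambda_{1}+c)$.

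Finally I would invoke the recursion formula of Cheng and Yang. The Yang-type inequality \eqref{Yang-type-ineq} holds with the genuine dimension $n$, and since $\frac{4}{n}\leq4$ it implies the weaker hypothesis $\sum_{i}(\mu_{k+1}-\mu_{i})^{2}\leq4\sum_{i}\mu_{i}(\mu_{k+1}-\mu_{i})$ needed to run the recursion with parameter $1$; setting $\mu_{i+1}=\lambda_{i}+c$ this yields $\lambda_{k+2}+c\leq C_{0}(1)(\lambda_{1}+c)(k+1)^{2}$, the $n=1$ analogue of \eqref{zlz-upper} with $C_{0}(1)$ as in \eqref{Cheng-Yang-ineq}. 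Substituting into the displayed gap inequality gives
\begin{equation*}
\lambda_{k+2}-\lambda_{k+1}\leq 4\sqrt{C_{0}(1)}\,(\lambda_{1}+c)(k+1),
\end{equation*}
and reindexing $k+1\mapsto k$ finishes the proof. The main obstacle is the third step: producing the explicit pointwise control of $\rho|t|$ that yields precisely $\kappa$ from the soliton identities and the diameter of $\Omega$; once the Gaussian splitting and this estimate are secured, the first, second and fourth steps are routine eigenvalue bookkeeping.
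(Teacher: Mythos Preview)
Your approach is essentially the paper's. The paper does not invoke Cheeger--Gromoll explicitly; instead it takes as trial function the Busemann function $B^{+}$ associated with the given line and uses $|\nabla B^{+}|=1$ together with $\Delta B^{+}=0$ (the latter holding precisely because $\mathrm{Ric}\geq0$ and a line exists---this is the analytic content of the splitting). After splitting, your coordinate $t$ \emph{is} $B^{+}$ up to sign, so the two trial functions coincide and both arguments feed the same data $a_{1}=b_{1}=1$ into Proposition~\ref{prop2.3}. For the bound on $\|2\partial_{t}u_{1}+u_{1}\Delta_{f}t\|_{\Omega}^{2}$ the paper quotes the Cao--Zhou estimate $|\nabla f|^{2}\leq\tfrac{\rho}{2}\bigl(r(x)+2\sqrt{f(x_{0})}+\overline{c}\bigr)^{2}$ together with an inequality from \cite{Z1}, arriving at the same quantity $4\lambda_{1}+\tfrac{\rho}{2}\kappa^{2}+2\sqrt{2\rho}\,\kappa\lambda_{1}^{1/2}$; your route via $\rho^{2}t^{2}\leq|\nabla f|^{2}$ and the Lipschitz oscillation of $t$ over $\Omega$ is an equivalent repackaging.

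There is one genuine slip in your last step. You produce the hypothesis for the Cheng--Yang recursion by weakening \eqref{Yang-type-ineq} from $\tfrac{4}{n}$ to $4$ and then set $\mu_{i}=\lambda_{i}+c$. But \eqref{Yang-type-ineq} carries the constant of \eqref{c}, not the $c$ of the present theorem, so the shifted sequence you obtain is not the one you need. The paper avoids this by deriving the required one-dimensional Yang-type inequality \eqref{ineq-z-2} directly: apply the general formula \eqref{gen-for} of Proposition~\ref{thm-x-x} with the single test function $h=B^{+}$ (equivalently your $t$). Since $|\nabla t|\equiv1$, the left side is $\sum_{i}(\lambda_{k+1}-\lambda_{i})^{2}$, and the right side is bounded by $4\sum_{i}(\lambda_{k+1}-\lambda_{i})(\lambda_{i}+c)$ with exactly the $c$ of the statement; the recursion with parameter $1$ then gives $\lambda_{k+1}+c\leq C_{0}(1)(\lambda_{1}+c)k^{2}$ and the conclusion follows as you wrote.
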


\begin{proof}Assume that
$\gamma$ is a geodesic line on $(M^{n},g,f)$ and $B^{+}(x)$ is
Busemann function associated with $\gamma$.  and $h(x)=B^{+}(x)$. Then, we have $$|\nabla B^{+}(x)|^{2}=1$$ a.e. in $\Omega$. In \eqref{general-formula-2}, we suppose that $l=1$,
\begin{equation}\begin{aligned}\label{general-formula-5} \frac{a_{1}^{2}+b_{1}}{2}\left(\lambda_{k+2}-\lambda_{k+1}\right)^{2}
\leq4(\lambda_{k+2}+\rho)\|2\langle\nabla F_{1},\nabla u_{i}\rangle+u_{i}\Delta_{f}F_{1}\|^{2}
,
\end{aligned}\end{equation}Putting $F_{1}(x)=B^{+}(x)$
and substituting into the inequality \eqref{general-formula-5}, we have

\begin{equation*}a_{1}^{2}=\|\nabla F_{1}u_{i}\|_{\Omega}^{2}=\sqrt{\||\nabla F_{1}|^{2}u_{i}\|_{\Omega}^{2}}=b_{1}=1,\end{equation*}
which implies that
\begin{equation}\begin{aligned}\label{gen-for-6} \left(\lambda_{k+2}-\lambda_{k+1}\right)^{2}
\leq4(\lambda_{k+2}+\rho)\|u_{i}\Delta_{f}B^{+}+2\langle\nabla B^{+},\nabla u_{i}\rangle\|_{\Omega}^{2},
\end{aligned}\end{equation}Using equation \eqref{soliton-equation} and the contracted second Bianchi
identity, we have (see \cite{CH2} or Theorem 20.1 in
\cite{Ha3})

\begin{equation}\label{soliton-1}R+|\nabla f|^{2}-2\rho
f=\overline{c}\end{equation} for some constant $\overline{c}$. Here
$R$ denotes the scalar curvature of $g_{ij}$. By Lemma 2.3 in
\cite{CaZ}, we know that

\begin{equation}\label{c-z}|\nabla f|^{2}\leq\frac{\rho}{2}(r(x) +
2\sqrt{f(x_{0})}+\overline{c})^{2}.\end{equation} Here $r(x)=d(x_{0},x)$ is the distance
function  from some fixed point $x_{0}\in M^{n}$. Since $\Delta
B^{+}(x)=0$ (see \cite{SY}),
it  follows from \eqref{soliton-1} and \eqref{c-z} that (cf.\cite{Z1})

\begin{equation}\label{zl}\begin{aligned}\|u_{i}\Delta_{f}B^{+}+2\langle\nabla
B^{+},\nabla u_{i}\rangle\|_{\Omega}^{2}
\leq4\lambda_{i}+\frac{\rho}{2}\kappa^{2}+2\sqrt{2\rho}\kappa\lambda_{i}^{\frac{1}{2}},\end{aligned}\end{equation}where
$\kappa=\frac{1}{2}Diameter(\Omega)+2\min_{x\in\Omega}\sqrt{f(x)}+\overline{c},$
and $\overline{c}$ is a constant satisfies $R+|\nabla
f|^{2}-f=\overline{c}.$
Inserting \eqref{zl} into \eqref{gen-for-6}, we get

\begin{equation}\begin{aligned}\left(\lambda_{k+2}-\lambda_{k+1}\right)^{2}
\leq4(\lambda_{k+2}+\rho)\left(4\lambda_{i}+\frac{\rho}{2}\kappa^{2}+2\sqrt{2\rho}\kappa\lambda_{i}^{\frac{1}{2}}\right).
\end{aligned}\end{equation}Putting

$$c=\frac{1}{4}\left(\frac{\rho}{2}\kappa^{2}+2\sqrt{2\rho}\kappa\lambda_{i}^{\frac{1}{2}}\right),$$ then we have

\begin{equation}\begin{aligned}\label{gen-for-7} \lambda_{k+2}-\lambda_{k+1}
\leq4\sqrt{(\lambda_{k+2}+c)\left(\lambda_{i}+c\right)}.
\end{aligned}\end{equation}
In \cite{Z1}, the author proved the following eigenvalue inequality:
\begin{equation}\label{ineq-z-2}\begin{aligned}\sum^{k}_{i=1}(\lambda_{k+1}-\lambda_{i})^{2}
\leq\sum^{k}_{i=1}(\lambda_{k+1}-\lambda_{i})\left(4\lambda_{i}+\frac{\rho}{2}\kappa^{2}
+2\sqrt{2\rho}\kappa\lambda_{i}^{\frac{1}{2}}\right).\end{aligned}\end{equation}
Therefore, by Cheng and Yang's recursion formula, we have

\begin{equation}\begin{aligned}\label{gen-for-8}\lambda_{k+1}+c
\geq C_{0}(1) \left(\lambda_{1}+c\right) k^{2}.
\end{aligned}\end{equation}
Synthesizing \eqref{ineq-z-2} and \eqref{gen-for-8}, we yield

\begin{equation*}\begin{aligned}\lambda_{k+2}-\lambda_{k+1}&
\leq4\sqrt{(\lambda_{k+2}+c)\left(\lambda_{1}+c\right)}\\&\leq4\sqrt{C_{0}(1)}\left(\lambda_{1}+c\right)(k+1),
\end{aligned}\end{equation*}where $C_{0}(1)$ is the same as \eqref{Cheng-Yang-ineq}. Hence, we finish the proof of this theorem.
\end{proof}

\section{Applying to Some Important Solitons}\label{sec5}

It is well known that  nontrivial compact Ricci solitons
may exist only when the dimension of  $M^{n}$ is larger then 3. and they must have nonconstant positive scalar curvature
$(S> 0)$. However, complete noncompact examples exist in any dimension as the Gaussian soliton
(i.e., the radial vector field on the Euclidean space) shows. In this section, the first object we consider is
the Gaussian soliton, which is introduced by Hamilton in \cite{Ha2}.
It is not difficult to check that the flat Euclidean space
$(\mathbb{R}^{n}, \delta_{ij} )$ is a gradient shrinker with
potential function $f = |x|^{2}/4$:
$$\partial_{i}\partial_{j}f=\frac{1}{2}\delta_{ij}.$$ The Gaussian
shrinking soliton is exactly the triple $(\mathbb{R}^{n},\delta_{ij},|x|^{2}/4)$. We investigat the eigenvalue of drifting Laplacian on the Gaussian soliton and prove the following:

\begin{thm}
Let $(M^{n},g,f)$ be the Gaussian
shrinking soliton and $\lambda_{i}$ be the $i$-th $(i=1,2,\cdots,k)$
eigenvalue of the eigenvalue problem \eqref{Eigenvalue-Problem}. Then,

\begin{equation*}\begin{aligned}\lambda_{k+1}-\lambda_{k}\leq
C_{n,\Omega}k^{\frac{1}{n}},\end{aligned}
\end{equation*}
where $$C_{n,\Omega}=4(\lambda_{1}+c_{1})\sqrt{\frac{C_{0}(n)}{n}}.$$
\end{thm}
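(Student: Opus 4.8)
The plan is to recognize the Gaussian shrinking soliton as the gradient shrinking Ricci soliton $(\mathbb{R}^{n},\delta_{ij},f=|x|^{2}/4)$ with $\rho=\tfrac12$, scalar curvature $S\equiv0$, and to apply Theorem \ref{thm-soliton} together with the trivial isometric immersion $\mathbb{R}^{n}\hookrightarrow\mathbb{R}^{n}$ (so $p=0$ and the mean curvature $H\equiv0$). Because the ambient space is flat, the mean curvature vanishes identically, so the infimum over $\psi\in\Psi$ defining the constant $c$ is realized by the identity immersion and the term $n^{2}H^{2}$ drops out.

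First I would take the trial functions $F_{j}(x)=x^{j}$, the standard Euclidean coordinate functions, which is exactly the choice $\alpha_{j}=1$ in the proof of Theorem \ref{thm1.1}. Since $\Delta x^{j}=0$, the auxiliary cross-term conditions \eqref{sum-01} and \eqref{sum-0} hold automatically (equivalently, by \eqref{lemma-3.1-4}), and $|\nabla x^{j}|^{2}\equiv1$ forces $a_{j}^{2}=b_{j}=1$ for every $j$. Consequently $\alpha=\overline{\alpha}=1$ and $\sum_{j=1}^{n}b_{j}=n$, so the gap coefficient simplifies via
\[
\sqrt{\frac{32\overline{\alpha}^{2}C_{0}(n)}{n\alpha^{2}+\sum_{j=1}^{n+p}b_{j}}}=\sqrt{\frac{32C_{0}(n)}{2n}}=4\sqrt{\frac{C_{0}(n)}{n}},
\]
which is precisely the factor appearing in $C_{n,\Omega}$.

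Next I would pin down the constant $c_{1}$. From $f=|x|^{2}/4$ one computes $|\nabla f|^{2}=|x|^{2}/4=f$ and $\Delta f=n/2$, hence $\Delta_{f}f=n/2-f$; substituting these together with $H\equiv0$ and $S\equiv0$ into the definition of $c$ gives
\[
c_{1}=\frac{1}{4}\max_{\Omega}\left(2|\Delta_{f}f|+|\nabla f|^{2}\right)=\frac{1}{4}\max_{\Omega}\left(\big|\,n-\tfrac12|x|^{2}\big|+\tfrac14|x|^{2}\right),
\]
which is finite because $\Omega$ is bounded. With these substitutions the general formula (Proposition \ref{prop2.3} with $i=1$ and $\tau=c_{1}$) yields $(\lambda_{k+2}-\lambda_{k+1})^{2}\le \tfrac{16}{n}(\lambda_{k+2}+c_{1})(\lambda_{1}+c_{1})$.

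Finally I would close the estimate with the Cheng--Yang recursion bound \eqref{zlz-upper}, in the form $\lambda_{k+2}+c_{1}\le(1+\tfrac4n)(\lambda_{1}+c_{1})(k+1)^{2/n}$ with $C_{0}(n)$ as in \eqref{Cheng-Yang-ineq}, to replace $\sqrt{\lambda_{k+2}+c_{1}}$, exactly as in the proof of Theorem \ref{thm1.1}; after reindexing this delivers $\lambda_{k+1}-\lambda_{k}\le 4(\lambda_{1}+c_{1})\sqrt{C_{0}(n)/n}\,k^{1/n}$. The one step that requires genuine care is the evaluation of $c_{1}$: in the present non-compact (domain) setting the integrated identity \eqref{4.4} is unavailable, since $\int_{\Omega}\Delta_{f}f\,e^{-f}dv$ carries boundary contributions that do not vanish, so one must instead use the pointwise soliton identity \eqref{4.2}, which forces the structural constant $\widetilde{c}=0$ for the Gaussian shrinker. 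Everything else is a direct specialization of the machinery already in place.
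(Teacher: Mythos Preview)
Your argument is correct and proves the theorem as stated, but it proceeds differently from the paper's proof. You specialize the general machinery of Theorem~\ref{thm-soliton} (equivalently Corollary~\ref{corr-3.2}): with $F_{j}=x^{j}$, $\alpha=\overline{\alpha}=1$, $p=0$, $H\equiv0$, and the soliton constant $c_{1}=\tfrac14\max_{\Omega}\bigl(2|\Delta_{f}f|+|\nabla f|^{2}\bigr)=\tfrac14\max_{\Omega}\bigl(|n-\tfrac12|x|^{2}|+\tfrac14|x|^{2}\bigr)$. The paper instead expands $\sum_{j}\|2\langle\nabla x^{j},\nabla u_{i}\rangle+u_{i}\Delta_{f}x^{j}\|^{2}$ directly and integrates the cross term by parts, obtaining the exact identity
\[
\sum_{j=1}^{n}\|2\langle\nabla x^{j},\nabla u_{i}\rangle+u_{i}\Delta_{f}x^{j}\|^{2}
=4\lambda_{i}+\int_{\Omega}u_{i}^{2}\bigl(2\Delta_{f}f+|\nabla f|^{2}\bigr)e^{-f}dv
=4\lambda_{i}+n-\tfrac14\int_{\Omega}u_{i}^{2}|x|^{2}e^{-f}dv,
\]
so that $c_{1}=\tfrac{n}{4}-\tfrac{1}{16}\min_{\Omega}|x|^{2}$. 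The point is that the paper keeps the signed quantity $2\Delta_{f}f$ rather than passing to $2|\Delta_{f}f|$, which for the Gaussian potential collapses to the pleasant expression $n-\tfrac14|x|^{2}$ and yields a strictly smaller constant. Your route is cleaner as a black-box application of the general theory; the paper's route buys a sharper $c_{1}$ by exploiting the explicit form of $f=|x|^{2}/4$.
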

\begin{proof} Let $F_{j}(x)=x^{j},$ where $x^{\j}$ denotes the $j$-th local coordinate of $x_{0}\in\Omega\subset\mathbb{R}^{n}$.
Hence, we have $|\nabla x^{j}|=1$ and $\Delta x^{j}=0$. Let $l=n$, then, from \eqref{general-formula-2}, we
have

\begin{equation*}a_{j}=\sqrt{\|\nabla F_{j}u_{i}\|^{2}}=\sqrt{\|\nabla x_{j}u_{i}\|^{2}}=1,\end{equation*}

\begin{equation*}b_{j}=\sqrt{\||\nabla F_{j}|^{2}u_{i}\|^{2}}=\sqrt{\||\nabla x_{j}|^{2}u_{i}\|^{2}}=1.\end{equation*}
Therefore, we have

\begin{equation*}
a_{j}=b_{j},\end{equation*}
and

\begin{equation}\begin{aligned}\sum_{j=1}^{n}\left(\lambda_{k+2}-\lambda_{k+1}\right)^{2}
\leq4(\lambda_{k+2}+\rho)\sum_{j=1}^{n}\|2\langle\nabla x_{j},\nabla u_{i}\rangle+u_{i}\Delta_{f}x_{j}\|^{2}.
\end{aligned}\end{equation}
Since

\begin{equation}\begin{aligned}\label{ineq-sum}\sum_{j=1}^{n}\|2\langle\nabla x_{j},\nabla u_{i}\rangle+u_{i}\Delta_{f}x_{j}\|^{2}&=4\int_{\Omega}\langle\nabla x^{\alpha},\nabla u_{1}\rangle e^{-f}dv-\int_{\Omega}\langle\nabla f,\nabla x^{\alpha}\rangle^{2}u_{1}^{2}e^{-f}dv\\&\ \ \ \ \ \ \ +2\int_{\Omega}\langle\nabla\langle\nabla f,\nabla x^{\alpha}\rangle,\nabla x^{\alpha}\rangle u_{1}^{2}e^{-f}dv\Bigg),\end{aligned}\end{equation}and $$\sum_{\alpha=1}^{n}\langle\nabla x^{\alpha},\nabla x^{\alpha}\rangle=n,$$ from \eqref{ineq-sum}, we have

\begin{equation}\label{gap11}\begin{aligned}n\left(\lambda_{k+2}-\lambda_{k+1}\right)^{2}
&\leq16(\lambda_{k+2}+c)\sum^{n}_{\alpha=1}\Bigg(\int_{\Omega}\langle\nabla x^{\alpha},\nabla u_{1}\rangle e^{-f}dv-\frac{1}{4}\int_{\Omega}\langle\nabla f,\nabla x^{\alpha}\rangle^{2}u_{1}^{2}e^{-f}dv
\\&\ \ \ \ \ \ \ +\frac{1}{2}\int_{\Omega}\langle\nabla\langle\nabla f,\nabla x^{\alpha}\rangle,\nabla x^{\alpha}\rangle u_{1}^{2}e^{-f}dv\Bigg)\\
&=16(\lambda_{k+2}+c)\Bigg(\lambda_{1}-\frac{1}{4}\int_{\Omega}\langle\nabla \left(\frac{|x|^{2}}{4}\right),\nabla\left(\frac{|x|^{2}}{4}\right)\rangle u_{1}^{2}e^{-f}dv
\\&\ \ \ \ \ \ \ +\frac{1}{2}\sum^{n}_{\alpha=1}\int_{\Omega}\langle\nabla\langle\nabla\left(\frac{|x|^{2}}{4}\right),\nabla x^{\alpha}\rangle,\nabla x^{\alpha}\rangle u_{1}^{2}e^{-f}dv\Bigg)\\
&=16(\lambda_{k+2}+c)\left(\lambda_{1}-\frac{1}{16}\int_{\Omega}|x|^{2}u_{1}^{2}e^{-f}dv
+\frac{n}{4}\int_{\Omega}u_{1}^{2}e^{-f}dv\right)\\
&\leq(\lambda_{k+2}+c)\left(16\lambda_{1}+4n-\min_{\Omega}|x|^{2}
\right)\\
&=16(\lambda_{k+2}+c)\left(\lambda_{1}+c
\right)
.
\end{aligned}\end{equation}
Therefore, we deduce from \eqref{gap1} that,

\begin{equation*}\begin{aligned}\lambda_{k+2}-\lambda_{k+1}&\leq
4\sqrt{\frac{\lambda_{k+2}+c}{n}(\lambda_{1}+c)}\\&\leq
4(\lambda_{1}+c)\sqrt{\frac{C_{0}(n)}{n}}(k+1)^{\frac{1}{n}}
\\&=C_{n,\Omega}(k+1)^{\frac{1}{n}},\end{aligned}
\end{equation*}
where $$C_{n,\Omega}=4(\lambda_{1}+c)\sqrt{\frac{C_{0}(n)}{n}}.$$
This completes the proof of Theorem \ref{thm1.1}.

\end{proof}

We suppose that $(\mathbb{N}^{m},\langle,\rangle)$ is any
$m$-dimensional Einstein manifold with Ricci curvature
$Ric(\textbf{w})\neq0,~\textbf{w}\in\mathbb{N}^{m}$, and
$f(\textbf{v},\textbf{w}):\mathbb{R}^{n-m}\times
\mathbb{N}^{m}\rightarrow R$ is defined by (cf. \cite{PRS})
\begin{equation}\label{4.3.51}f(\textbf{v},\textbf{w})=\frac{Ric(\textbf{w})}{2}
|\textbf{v}|^{2}_{\mathbb{R}^{n-m}} + \langle \textbf{v},
\textbf{B}\rangle_{\mathbb{R}^{n-m}}+\textbf{C},\end{equation}with
$\textbf{C} \in\mathbb{R}$ and $\textbf{B}\in \mathbb{R}^{n-m}$,
where $|\cdot|_{\mathbb{R}^{n-m}}$ denotes the standard inner on the
$(n-m)$-dimensional Euclidean space $\mathbb{R}^{n-m}$. Then, it is well known that the Riemannian product manifold
$$(\mathbb{R}^{n-m}\times\mathbb{N}^{m},\langle,\rangle_{\mathbb{R}^{n-m}}+\langle,\rangle_{\mathbb{N}^{m}},f)$$ is a (noncompact)
shrinking soliton. In
particular, we consider the unit round cylinder
$\mathbb{S}^{m}(1)\times\mathbb{R}^{n-m}$ which is a noncompact
shrinking soliton, and assume that
$\textbf{B}=\textbf{0}\in\mathbb{R}^{n-m}$ and $\textbf{C}=0$, i.e.,
by substituting them into (\ref{4.3.51}), we have
\begin{equation}\label{4.3.52}f(\textbf{v},\textbf{w})=\frac{(m-1)|\textbf{v}|^{2}_{\mathbb{R}^{n-m}}}{2}
.\end{equation}
Under the above assumption, we can prove the following theorem.

\begin{thm}\label{thm-product-soliton}\ \ Let $$(\mathbb{R}^{n-m}\times\mathbb{N}^{m},\langle,\rangle_{\mathbb{R}^{n-m}}+\langle,\rangle_{\mathbb{N}^{m}},f)$$
 be an $n$-dimensional, gradient Ricci soliton, with \begin{equation*} f(\textbf{v},\textbf{w})=\frac{(m-1)|\textbf{v}|^{2}_{\mathbb{R}^{n-m}}}{2}
.\end{equation*}
Let  $\lambda_{i}$ be the $i$-th $(i=1,2,\cdots,k)$
eigenvalue of the eigenvalue problem \eqref{Eigenvalue-Problem}. Then,  we have

\begin{equation*}\begin{aligned}\lambda_{k+1}-\lambda_{k}\leq
C_{n,\Omega}k^{\frac{1}{n}},\end{aligned}
\end{equation*}
where $$C_{n,\Omega}=4(\lambda_{1}+c_{1})\sqrt{\frac{C_{0}(n)}{n}}.$$ \end{thm}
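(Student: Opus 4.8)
The plan is to realize the given soliton as an explicitly immersed submanifold and then run the same scheme that produced Theorem~\ref{thm-z-3} and the Gaussian case, letting the soliton identities do the bookkeeping of the curvature term. First I would record the soliton data. Splitting the Hessian of $f(\mathbf{v},\mathbf{w})=\frac{m-1}{2}|\mathbf{v}|^{2}_{\mathbb{R}^{n-m}}$ along the two factors and feeding it into \eqref{soliton-equation} forces $\rho=m-1$ and forces $\mathbb{N}^{m}$ to be Einstein with ${\rm Ric}=(m-1)\langle,\rangle$; the relevant isometric immersion is then the round cylinder $\mathbb{S}^{m}(1)\times\mathbb{R}^{n-m}\hookrightarrow\mathbb{R}^{m+1}\times\mathbb{R}^{n-m}=\mathbb{R}^{n+1}$. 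On this immersion I would compute, using the soliton equations as in Proposition~\ref{prop4.1}, the scalar curvature $S=m(m-1)$, $\Delta f=(n-m)(m-1)$, $|\nabla f|^{2}=(m-1)^{2}|\mathbf{v}|^{2}$, so that $\Delta_{f}f=(n-m)(m-1)-(m-1)^{2}|\mathbf{v}|^{2}$, and the mean curvature satisfies $n^{2}H^{2}=m^{2}$ by \eqref{lemma-3.1-3}.

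With these in hand I would take the $n+1$ standard coordinate functions $y^{1},\dots,y^{n+1}$ of $\mathbb{R}^{n+1}$ as the trial functions $F_{j}$ and apply Proposition~\ref{prop2.3} with $\tau=c$ exactly as in the proof of Theorem~\ref{thm-z-3}. The aggregate identities of Lemma~\ref{lem3.1} are what make the constant collapse to the Gaussian form: $\sum_{\alpha}|\nabla y^{\alpha}|^{2}=n$ by \eqref{lemma-3.1-1} is responsible for the leading coefficient $n$, the identity $\sum_{\alpha}\langle\nabla y^{\alpha},\nabla u_{i}\rangle^{2}=|\nabla u_{i}|^{2}$ integrates to $\lambda_{i}$ on the right, and $\sum_{\alpha}\Delta y^{\alpha}\nabla y^{\alpha}=0$ from \eqref{lemma-3.1-4} together with the orthogonality conditions \eqref{sum-01}--\eqref{sum-0} kills the cross terms. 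What survives on the right is $4\lambda_{i}+\int_{\Omega}u_{i}^{2}\big(n^{2}H^{2}+2|\Delta_{f}f|+|\nabla f|^{2}\big)e^{-f}\,dv=4(\lambda_{i}+c_{1})$, where
$$c_{1}=\frac14\max_{\Omega}\Big(m^{2}+2\big|(n-m)(m-1)-(m-1)^{2}|\mathbf{v}|^{2}\big|+(m-1)^{2}|\mathbf{v}|^{2}\Big),$$
a finite constant since $\Omega$ is bounded. Setting $i=1$ this yields $n(\lambda_{k+2}-\lambda_{k+1})^{2}\leq16(\lambda_{k+2}+c_{1})(\lambda_{1}+c_{1})$.

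To finish I would feed the Yang-type inequality \eqref{Yang-type-ineq} for this immersion into the Cheng--Yang recursion formula, i.e. use \eqref{zlz-upper}, to get $\lambda_{k+2}+c_{1}\leq(1+\tfrac4n)(\lambda_{1}+c_{1})(k+1)^{2/n}\leq C_{0}(n)(\lambda_{1}+c_{1})(k+1)^{2/n}$, and substitute into the displayed gap bound. Taking square roots gives $\lambda_{k+2}-\lambda_{k+1}\leq 4(\lambda_{1}+c_{1})\sqrt{C_{0}(n)/n}\,(k+1)^{1/n}$, which after reindexing $k\mapsto k-1$ is exactly the asserted inequality with $C_{n,\Omega}=4(\lambda_{1}+c_{1})\sqrt{C_{0}(n)/n}$.

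The step I expect to be the main obstacle is showing that the left-hand coefficient is genuinely $n$ rather than something smaller. In Proposition~\ref{prop2.3} the coefficient is $\sum_{j}\frac{a_{j}^{2}+b_{j}}{2}$ with $a_{j}^{2}=\|\nabla F_{j}u_{i}\|^{2}_{\Omega}$ and $b_{j}=\sqrt{\||\nabla F_{j}|^{2}u_{i}\|^{2}_{\Omega}}$; although $\sum_{j}a_{j}^{2}=n$ holds exactly by \eqref{lemma-3.1-1}, for the sphere coordinates one only has $|\nabla y^{\alpha}|^{2}=1-(y^{\alpha})^{2}\le 1$, so the $b_{\alpha}$ are not individually equal to $a_{\alpha}^{2}$ and $\sum_{j}b_{j}$ need not reach $n$. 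Making the constant come out to the clean Gaussian value therefore requires choosing the scaling factors $\alpha_{j}$ in $F_{j}=\alpha_{j}y^{j}$ and arguing that the flat directions, where $a_{j}=b_{j}=1$, supply the missing part of the coefficient together with the sphere directions; this is the one place where the special product structure, rather than a generic isometric immersion, must be exploited, and it is where I would spend most of the care.
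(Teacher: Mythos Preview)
Your plan is essentially the paper's: realize the soliton as the round cylinder $\mathbb{S}^{m}(1)\times\mathbb{R}^{n-m}\hookrightarrow\mathbb{R}^{n+1}$, take the ambient coordinate functions (with scalings $\delta_{j}$) as the trial functions $F_{j}$, feed them into Proposition~\ref{prop2.3}, compute the right-hand side using the identities of Lemma~\ref{lem3.1} together with the explicit cylinder formulas $\Delta x^{j}=0$ for $j\le n-m$ and $\Delta x^{j}=-mx^{j}$ for $j>n-m$, and close with the Cheng--Yang recursion. Your soliton computations ($S$, $\Delta f$, $|\nabla f|^{2}$, $n^{2}H^{2}=m^{2}$) match the paper's.

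The one substantive difference is in how the right-hand side is estimated. You propose to bound it by the generic expression $4\lambda_{i}+\int u_{i}^{2}(n^{2}H^{2}+2|\Delta_{f}f|+|\nabla f|^{2})e^{-f}dv$ exactly as in the proof of Theorem~\ref{thm-z-3}. The paper instead expands $\mathfrak{A}=\sum_{j}\|2\langle\nabla F_{j},\nabla u_{i}\rangle+u_{i}\Delta_{f}F_{j}\|^{2}$ directly, splitting the Euclidean and sphere directions and using $\sum_{j>n-m}(x^{j})^{2}=1$, $\sum_{j>n-m}|\nabla x^{j}|^{2}=m$, and $\Delta_{f}|\mathbf{x}|^{2}=2(n-1)-2(m-1)|\mathbf{x}|^{2}$, to arrive at a constant of the shape $c=\frac{1}{4\overline{\delta}^{2}}\big[(m-1)^{2}(\overline{\delta}^{2}+2\widetilde{\delta}^{2})+(m-1)(2n-1)\widetilde{\delta}^{2}+(2m-1)\overline{\delta}^{2}\big]$. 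Either route is fine; yours is shorter and gives a slightly different (but equally legitimate) $c_{1}$.

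Your final paragraph pinpoints the genuine issue, and you should be aware that the paper does \emph{not} in fact force the left coefficient to equal $n$. Its lower bound is only $\sum_{j}\frac{a_{j}^{2}+b_{j}}{2}\ge\frac{1}{2}\big(n\delta^{2}+(n+1)\gamma\big)$ with $\delta=\min\delta_{j}$ and $\gamma=\min b_{j}$, and the final constant the appendix actually derives is $C_{n,\Omega}=(\lambda_{1}+c)\sqrt{\dfrac{32C_{0}(n)\overline{\delta}^{2}}{n\delta^{2}+(n+1)\gamma}}$, not the clean Gaussian value $4(\lambda_{1}+c_{1})\sqrt{C_{0}(n)/n}$ printed in the theorem statement. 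So the obstacle you flagged is real, and the resolution in the paper is to absorb it into the constant via the parameters $\delta,\overline{\delta},\gamma$ rather than to eliminate it; do not spend effort trying to make $\sum_{j}b_{j}=n$ hold on the sphere factor, because it does not.
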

This proof is given in the appendix.

\begin{rem}In the above theorem, we consider the special case: $\mathbb{R}^{n-m}\times \mathbb{N}^{m}=\mathbb{R}^{n-m}\times \mathbb{S}^{m}$,
$\textbf{B}=\textbf{0}\in\mathbb{R}^{n-m}$ and $\textbf{C}=\textbf{0}$.
However, for the general case, we can obtain similar eigenvalue inequality of of Dirichlet problem \eqref{Eigenvalue-Problem} by
the same argument as in the proof of theorem \ref{thm-product-soliton}.
\end{rem}

\begin{rem} Suppose that the dimension $n\geq3$, and $(M^{n},g,f,)$ is a complete, rotationally
invariant shrinking soliton structure on a on a manifold $M^{n}$,
which is diffeomorphic to one of $\mathbb{S}^{n}$, $\mathbb{R}^{n}$, or
$\mathbb{R}\times\mathbb{S}^{n-1}$. Then, one has (cf. {\rm \cite{Kot}}) \\(1)if
$M^{n}\cong\mathbb{S}^{n}$, then $g$ is isometric to a round sphere
and $f\equiv {\rm const}$; \\(2) if $M^{n}\cong\mathbb{R}^{n}$ ,
then $g$ is flat; \\(3) if
$M^{n}\cong\mathbb{R}\times\mathbb{S}^{n-1}$, then $g$ is isometric
to the standard cylinder $dr^{2} +
\omega^{2}_{0}g_{\mathbb{S}^{n-1}}$ of radius $\omega_{0}=
\sqrt{(n-2 )/\rho}$ and $f=f(r)=(n-2)r^{2}/(2\omega^{2}_{0})+{\rm
linear}$. According to the above classification of solitons, it is not difficult to   obtain similar upper bound of the consecutive eigenvalues of drifting Laplacian on those complete,
rotationally invariant shrinking solitons since eigenvalues are invariant in the sense of  isometry.\end{rem}

\section{Eigenvalues of Drifting Laplacian on Self-shrinkers}\label{sec6}

In this section, we consider that $X : M^{n}\rightarrow \mathbb{R}^{n+p}$ is an $n$-dimensional submanifold in the $(n+p)$-dimensional
Euclidean space $\mathbb{R}^{n+1}$. Let $\{e_{1},e_{2},\cdots,e_{n}\}$ be a local orthonormal basis of $M^{n}$ with respect to
the induced metric, and $\{\theta_{1},\theta_{2},\cdots,\theta_{n}\}$ be their dual $1$-forms. Let $e_{n+1},e_{n+2},\cdots,e_{n+p}$ be the local
unit orthonormal normal vector fields. Furthermore, we make the
following convention on the range of indices:
$$1\leq i,j,k,\ldots\leq n;$$$$n+1\leq
\alpha,\beta,\gamma,\ldots\leq n+p.$$
By Cartan lemma, we
have

$$h_{ij}^{\alpha}=h_{ji}^{\alpha}~(\forall \alpha,\forall i,j),$$
where $h_{ij}^{\alpha}$ is the components of the second fundament form.
The second fundamental form $h$ of $M^{n}$, the mean curvature vector $H$ and the norm square of the second fundamental form $A$ are
defined, respectively, by

$$h=\sum^{n+p}_{\alpha=n+1}\sum^{n}_{i,j=1}h_{ij}^{\alpha}\omega_{i}\otimes\omega_{j}e_{\alpha},$$$$H=\frac{1}{n}\sum^{n+p}_{\alpha=n+1}\sum^{n}_{i=1}h_{ii}^{\alpha}e_{\alpha},$$
and $$|A|^{2}=\sum_{i,j,\alpha}(h_{ij}^{\alpha})^{2}$$be the norm square of the second
fundamental form.
If the position vector $X$ evolves in the direction of the mean
curvature $H$,  then it gives rise to a solution to mean curvature flow:
$X(\cdot, t) : M^{n}\rightarrow \mathbb{R}^{n+1}$
satisfying $X(\cdot,0)=X(\cdot)$ and
$$ \frac{\partial X(p,t)}{\partial t}= H(p,t),\ \ \ (p,t)\in M \times [0,T),$$
where $H(p,t)$ denotes the mean curvature vector of hypersurface $M_{t}=X(M^{n},t)$
at point $X(p, t)$. In this section, we consider the self-shrinker
of the mean curvature flow, which is introduced by Huisken in \cite{H}(cf. Colding
and Minicozzi \cite{CM}).  An $n$-dimensional  submanifold $M^{n}$
in  the Euclidean space $\mathbb{R}^{n+p}$ is called a self-shrinker if it satisfies $$n\vec H=-X^N,$$ where $\vec H$ and
$X^N$ denote the mean curvature vector and  the orthogonal
projection of $X$  into the normal bundle of $M^n$, respectively.

\begin{thm}\label{thm2.1}
Under the assumption of theorem \ref{thm-product-soliton}, we have
\begin{equation}\label{6.1}
\lambda_{k+1}+ c
\leq (1+\frac {4}{n})(\lambda_{1}+c)\ k^{2/n},
\end{equation}
where $c$ is the same constant as in the theorem  \ref{thm-product-soliton}.
\end{thm}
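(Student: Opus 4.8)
The plan is to reduce \eqref{6.1} to the recursion formula of Cheng and Yang stated just before Theorem \ref{thm1.1}, exactly as was done for Lemma \ref{lem4.2} and for the unlabeled theorem preceding the proof of Theorem \ref{thm1.1}. The one substantive input I need first is the Yang-type inequality
\begin{equation*}
\sum^{k}_{i=1}(\lambda_{k+1}-\lambda_{i})^{2}\leq\frac{4}{n}\sum^{k}_{i=1}(\lambda_{k+1}-\lambda_{i})(\lambda_{i}+c)
\end{equation*}
with $c$ the constant in question. To obtain it I would feed the general formula \eqref{gen-for} of Proposition \ref{thm-x-x} the $n+p$ standard coordinate functions $x^{\alpha}$ of the isometric immersion $X:M^{n}\to\mathbb{R}^{n+p}$, taking $h=x^{\alpha}$ and summing over $\alpha=1,\dots,n+p$. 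By \eqref{lemma-3.1-1} the left-hand side telescopes, since $\sum_{\alpha}\|u_{i}\nabla x^{\alpha}\|_{\Omega}^{2}=\int_{\Omega}u_{i}^{2}\bigl(\sum_{\alpha}|\nabla x^{\alpha}|^{2}\bigr)e^{-f}dv=n$, giving $n\sum_{i}(\lambda_{k+1}-\lambda_{i})^{2}$ on the left.

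The right-hand side is the computational step. Writing $\Delta_{f}x^{\alpha}=\Delta x^{\alpha}-\langle\nabla f,\nabla x^{\alpha}\rangle$, expanding the square $\|2\langle\nabla x^{\alpha},\nabla u_{i}\rangle+u_{i}\Delta_{f}x^{\alpha}\|^{2}$, and summing over $\alpha$, the identities \eqref{lemma-3.1-2}, \eqref{lemma-3.1-3}, \eqref{lemma-3.1-4} collapse the three groups of terms: the first yields $4|\nabla u_{i}|^{2}$, the mixed term loses its $\Delta x^{\alpha}$ part by \eqref{lemma-3.1-4} and leaves $-2\langle\nabla f,\nabla u_{i}^{2}\rangle$, and the last yields $u_{i}^{2}(n^{2}H^{2}+|\nabla f|^{2})$. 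Integrating against $e^{-f}dv$ and using the self-adjointness \eqref{self-adjiont} turns $-2\langle\nabla f,\nabla u_{i}^{2}\rangle$ into $2u_{i}^{2}\Delta_{f}f$, so the bracket equals $4\lambda_{i}+\int_{\Omega}u_{i}^{2}(n^{2}H^{2}+2\Delta_{f}f+|\nabla f|^{2})e^{-f}dv$, precisely as in \eqref{Yang-type-ineq}. For the self-shrinker $f=\tfrac{|X|^{2}}{2}$ the identities $\Delta X=n\vec H=-X^{N}$ give $|\nabla f|^{2}=|X^{T}|^{2}$ and $\Delta_{f}f=n-|X|^{2}$, so the remainder $n^{2}H^{2}+2|\Delta_{f}f|+|\nabla f|^{2}$ is controlled pointwise by the curvature–position expression defining $c$, which yields the Yang-type inequality above.

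With this inequality in hand, set $\mu_{i}=\lambda_{i}+c>0$ for $i=1,2,\dots$. Since $\lambda_{k+1}-\lambda_{i}=\mu_{k+1}-\mu_{i}$, the inequality becomes exactly the hypothesis
\begin{equation*}
\sum_{i=1}^{k}(\mu_{k+1}-\mu_{i})^{2}\leq\frac{4}{n}\sum_{i=1}^{k}\mu_{i}(\mu_{k+1}-\mu_{i})
\end{equation*}
of the recursion formula of Cheng and Yang. Applying that formula to $\{\mu_{i}\}$, with $F_{1}=\tfrac{2}{n}\mu_{1}^{2}$ and $F_{k+1}\leq C(n,k)\bigl(\tfrac{k+1}{k}\bigr)^{4/n}F_{k}$ where $C(n,k)<1$, and iterating as in \cite{CY2}, the telescoping product over $j=1,\dots,k$ produces $\mu_{k+1}\leq(1+\tfrac{4}{n})\mu_{1}k^{2/n}$, which is \eqref{6.1}.

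I expect the main obstacle to be the explicit self-shrinker computation in the second paragraph, namely pinning down $\Delta_{f}f$ and $|\nabla f|^{2}$ and bounding $n^{2}H^{2}+2|\Delta_{f}f|+|\nabla f|^{2}$ by the stated $c$ with the correct handling of the absolute values; this is the only genuinely new content, since the reduction and the recursion step are verbatim repetitions of the machinery already used in Lemma \ref{lem4.2} and in the proof of Theorem \ref{thm1.1}, and may simply be cited.
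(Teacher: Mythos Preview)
Your proposal is correct and follows essentially the same route as the paper: the paper obtains the Yang-type inequality \eqref{shringker-ineq} by plugging the coordinate functions into the general formula \eqref{gen-for} together with the self-shrinker identities for $\Delta f$ and $|\nabla f|^{2}$ (your computation $\Delta_{f}f=n-|X|^{2}$, $|\nabla f|^{2}=|X^{T}|^{2}$ is exactly \eqref{eq-f-H}), and then derives \eqref{6.1} (written there as \eqref{shringker-upper}) by the substitution $\mu_{i}=\lambda_{i}+c$ and the Cheng--Yang recursion, just as you outline. The only cosmetic difference is that the paper records the sharper constant $C_{0}(n)$ in \eqref{shringker-upper} before relaxing to $1+\tfrac{4}{n}$.
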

\vskip 3mm
\textbf{\emph{Proof of theorem}}\ref{thm-shrinker}.
Since $M^n$ is a submanifold in the Euclidean space
$\mathbb{R}^{n+p}$, we have
\begin{equation*}
\Delta X=n\vec H.
\end{equation*}
Hence,
\begin{equation*}\begin{aligned}&
\Delta f= \langle X,\Delta X\rangle +n=n-n^2H^2,  \\&|\nabla
f|^2=|X|^2-|X^N|^2.\end{aligned}
\end{equation*}
Therefore, we have

\begin{equation}\begin{aligned}\label{eq-f-H}
n^{2}H^{2}+2|\Delta _{f}f|+|\nabla f|^{2}&=n^{2}H^{2}+|2n-|X|^2|+|X|^2-|X^N|^2\\&\leq n^{2}H^{2}+|2n-|X|^2|+|X|^2.\end{aligned}
\end{equation}
By \eqref{eq-f-H}, one can yield

\begin{equation}
\begin{aligned}\label{shringker-ineq}
\sum^{k}_{i=1}(\lambda_{k+1}-\lambda_{i})^{2}
&\leq\frac{4}{n}\sum^{k}_{i=1}(\lambda_{k+1}-\lambda_{i})
\left(\lambda_{i}+\frac{1}{4}\int_{M^{n}}u_{i}^{2}(2n-|X|^2)e^{-\frac{|X|^2}{2}}dv\right)\\&\leq\frac{4}{n}\sum^{k}_{i=1}(\lambda_{k+1}-\lambda_{i})
\left(\lambda_{i}+\frac{1}{4}\int_{M^{n}}u_{i}^{2}\left(n^{2}H^{2}+|2n-|X|^2|+|X|^2\right)e^{-\frac{|X|^2}{2}}dv\right)
\\&\leq\frac{4}{n}\sum^{k}_{i=1}(\lambda_{k+1}-\lambda_{i})
\left(\lambda_{i}+c\right),
\end{aligned}
\end{equation}where

$$c=\frac{1}{4}\inf_{\psi\in \Psi}\max_{\Omega}\left(n^{2}H^{2}+|2n-|X|^2|+|X|^2\right),$$and $\Psi$ denotes the set of all isometric immersions from $M^n$
into a Euclidean space. Here, we note that the first inequality \eqref{shringker-ineq} is established in  \cite{Z1}. Therefore, it follows from Cheng and Yang's recursion formula (see \cite{CY2}) that,
\begin{equation}
\begin{aligned}\label{shringker-upper}
\lambda_{k+1}+c
\leq C_{0}(n)\left(\lambda_{1}+c\right)k^{\frac{2}{n}},
\end{aligned}
\end{equation}Let $F_{j}(x)=\alpha_{j}x^{j}$ and $a_{j}>0$, such that

\begin{equation*}a_{j}^{2}=\|\nabla F_{j}u_{i}\|_{\Omega}^{2}\geq\sqrt{\||\nabla F_{j}|^{2}u_{i}\|_{\Omega}^{2}}=b_{j}\geq0,\end{equation*}

\begin{equation*}
\begin{aligned} \sum_{j=1}^{n+p}\int2 u_{i}\langle\nabla F_{j},\nabla f\rangle\Delta  F_{j}e^{-f}dv=0,\end{aligned}\end{equation*}
and

\begin{equation*}\begin{aligned}  \sum_{j=1}^{n+p}\int2 u_{i}\langle\nabla F_{j},\nabla u_{i}\rangle\Delta  F_{j}e^{-f}dv=0,\end{aligned}\end{equation*}
where   $j=1,2,\cdots,n+p,$ and $x^{j}$ denotes the $j$-th standard coordinate function of the Euclidean space $\mathbb{R}^{n+p}$.
Let $$\alpha=\min_{1\leq j\leq n+p}\{\alpha_{j}\},$$$$\overline{\alpha}=\max_{1\leq j\leq n+p}\{\alpha_{j}\},$$$$\beta=\min_{1\leq j\leq n+p}\{b_{j}\}.$$
According to theorem \ref{thm1.1}, lemma \ref{lem3.1} and \eqref{eq-f-H} , we have

\begin{equation}\begin{aligned}\sum_{j=1}^{l}\frac{a_{j}^{2}+b_{j}}{2}&=\sum_{j=1}^{n+p}\frac{a_{j}^{2}+b_{j}}{2}\\&\geq\frac{1}{2}\left(n\alpha^{2}+
\sum_{j=1}^{n+p}b_{j}\right)\\&\geq\frac{1}{2}\left(n\alpha^{2}+(n+p)\beta\right),\end{aligned}\end{equation}
and

\begin{equation}\begin{aligned}\sum_{j=1}^{n+p}\|2\langle\nabla F_{j},\nabla u_{i}\rangle+u_{i}\Delta_{f} F_{j}\|_{\Omega}^{2}&\leq\overline{\alpha}^{2}\left(4\lambda_{i}+\int_{\Omega}u^{2}_{i}\left(|\nabla f|^{2}+2|\Delta_{f}f|
+n^{2}H^{2}\right)e^{-f}dv\right)\\&\leq
4\overline{\alpha}^{2}\left(\lambda_{i}+c\right).\end{aligned}\end{equation}
Let $i=1,\tau=c$, then, by proposition \ref{prop2.3}, we
have

\begin{equation}\begin{aligned}\label{gap2}\left(\lambda_{k+2}-\lambda_{k+1}\right)^{2}
\leq\frac{32\overline{\alpha}^{2}(\lambda_{k+2}+c)}{n\alpha^{2}+(n+p)\beta}\left(\lambda_{1}+c\right)
,
\end{aligned}\end{equation}
Therefore, we deduce from \eqref{gap2} that,
\begin{equation*}\begin{aligned}\lambda_{k+2}-\lambda_{k+1}&\leq
\sqrt{\frac{32\overline{\alpha}^{2}}{n\alpha^{2}+(n+p)\beta}}\sqrt{\lambda_{1}+c}\sqrt{\lambda_{k+2}+c}\\&\leq
(\lambda_{1}+c)\sqrt{\frac{32\overline{\alpha}^{2}C_{0}(n)}{n\alpha^{2}+(n+p)\beta}}(k+1)^{\frac{1}{n}}
\\&=C_{n,\Omega}(k+1)^{\frac{1}{n}},\end{aligned}
\end{equation*}
where $$C_{n,\Omega}=(\lambda_{1}+c)\sqrt{\frac{32\overline{\alpha}^{2}C_{0}(n)}{n\alpha^{2}+(n+p)\beta}}.$$
Thus, we complete the proof
of this theorem.
$$\eqno\Box$$

\begin{rem}
Let $M^{n}$ be  an $n$-dimensional complete minimal self-shrinker  in the $(n+p)$-dimensional Euclidean space $\mathbb{R}^{n+p}$,
eigenvalues of the Dirichlet
problem \eqref{Eigenvalue-Problem} of drifting Laplacian with $f=\frac{|X|^2}{2}$ then the constant $c$ in theorem \ref{thm-z-6} will be written as$$c=\frac{1}{4}\inf_{\psi\in \Psi}\max_{\Omega}\left(|2n-|X|^2|+|X|^2\right).$$

\end{rem}

\begin{thm}\label{thm-product-soliton-1} Let $(M^{n},g)$ be an $n$-dimensional, compact Riemannian manifold. Let $H$ and $X$ denote the mean curvature of $M^n$ and the
position vector of $M^n$, respectively, and $\lambda_{i}$ be the $i$-th $(i=0,1,2,\cdots,k)$
eigenvalue of the eigenvalue problem \eqref{f-Laplacian-closed}. Then,  we have

\begin{equation*}\begin{aligned}\overline{\lambda}_{k+1}-\overline{\lambda}_{k}\leq
C_{n,\Omega}(k+1)^{\frac{1}{n}},\end{aligned}
\end{equation*}
where $$C_{n,\Omega}=(\overline{\lambda}_{1}+c)\sqrt{\frac{32\overline{\alpha}^{2}C_{0}(n)}{n\alpha^{2}+(n+p)\beta}},$$$$c=\frac{1}{4}\inf_{\psi\in \Psi}\max_{\Omega}\left(n^{2}H^{2}+|2n-|X|^2|+|X|^2\right),$$and $\Psi$ denotes the set of all isometric immersions from $M^n$
into a Euclidean space.
  \end{thm}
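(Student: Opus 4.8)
The plan is to reproduce the proof of Theorem \ref{thm-shrinker} step by step, replacing every Dirichlet ingredient by its closed-eigenvalue-problem analogue and letting the index $i$ run from $0$. I would begin with the pointwise identities for a self-shrinker carrying $f=\frac{|X|^2}{2}$. Since $M^n$ is immersed in $\mathbb{R}^{n+p}$ one has $\Delta X=n\vec H$, so that $\Delta f=\langle X,\Delta X\rangle+n=n-n^2H^2$ and $|\nabla f|^2=|X|^2-|X^N|^2$, whence, exactly as in \eqref{eq-f-H},
\begin{equation*}
n^2H^2+2|\Delta_f f|+|\nabla f|^2\leq n^2H^2+|2n-|X|^2|+|X|^2,
\end{equation*}
which is precisely the quantity inside the definition of $c$.

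Next I would run the closed analogue of the computation \eqref{shringker-ineq}. Summing the formula \eqref{2.11} of Proposition \ref{prop2.7} over the coordinate trial functions $h=y^\alpha$ and invoking Lemma \ref{lem3.1} (the identities \eqref{lemma-3.1-1}--\eqref{lemma-3.1-4}) turns the left side into $n\sum_{i=0}^{k}(\overline{\lambda}_{k+1}-\overline{\lambda}_i)^2$ and bounds the right side by the curvature expression above, giving
\begin{equation*}
\sum_{i=0}^{k}(\overline{\lambda}_{k+1}-\overline{\lambda}_i)^2\leq\frac{4}{n}\sum_{i=0}^{k}(\overline{\lambda}_{k+1}-\overline{\lambda}_i)(\overline{\lambda}_i+c).
\end{equation*}
Putting $\mu_{i+1}=\overline{\lambda}_i+c>0$ places this in the hypothesis of the Cheng--Yang recursion formula, so that, arguing as in Lemma \ref{lem4.2},
\begin{equation*}
\overline{\lambda}_{k+2}+c\leq C_0(n)(\overline{\lambda}_1+c)(k+1)^{2/n}.
\end{equation*}

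For the gap itself I would take $F_j(x)=\alpha_j x^j$, $j=1,\dots,n+p$, with $x^j$ the standard coordinates of $\mathbb{R}^{n+p}$, and set $a_j^2=\|\nabla F_j u_i\|_{M^n}^2\geq b_j=\sqrt{\||\nabla F_j|^2u_i\|_{M^n}^2}$, $\alpha=\min_j\alpha_j$, $\overline{\alpha}=\max_j\alpha_j$, $\beta=\min_j b_j$. Lemma \ref{lem3.1} gives $\sum_{j}\frac{a_j^2+b_j}{2}\geq\frac12(n\alpha^2+(n+p)\beta)$, while the self-shrinker identities give $\sum_{j=1}^{n+p}\|2\langle\nabla F_j,\nabla u_i\rangle+u_i\Delta_f F_j\|_{M^n}^2\leq4\overline{\alpha}^2(\overline{\lambda}_i+c)$. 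Substituting both into the closed general formula \eqref{general-formula-4} of Proposition \ref{gen-for-2} with $\tau=c$ and $i=1$ produces
\begin{equation*}
(\overline{\lambda}_{k+2}-\overline{\lambda}_{k+1})^2\leq\frac{32\overline{\alpha}^2(\overline{\lambda}_{k+2}+c)}{n\alpha^2+(n+p)\beta}(\overline{\lambda}_1+c),
\end{equation*}
and combining this with the recursion bound on $\overline{\lambda}_{k+2}+c$ and shifting the index gives the claimed estimate with the stated $C_{n,\Omega}$.

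The hard part will be the bookkeeping that forces the cross terms to vanish, namely arranging the two conditions $\sum_j\int 2u_i\langle\nabla F_j,\nabla f\rangle\Delta F_j\,e^{-f}dv=0$ and $\sum_j\int 2u_i\langle\nabla F_j,\nabla u_i\rangle\Delta F_j\,e^{-f}dv=0$. These are exactly what let \eqref{lemma-3.1-2}, \eqref{lemma-3.1-3} and \eqref{lemma-3.1-4} collapse the sum $\sum_j\|2\langle\nabla F_j,\nabla u_i\rangle+u_i\Delta_f F_j\|^2$ into the clean expression $4\overline{\alpha}^2(\overline{\lambda}_i+c)$; as in the Dirichlet setting they are secured by a suitable choice of immersion (equivalently an orthogonal rotation of the coordinate frame) together with \eqref{lemma-3.1-4}. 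Once they hold, the surviving integral estimates are the closed-manifold versions of \eqref{3.10}--\eqref{3.13}, now with no boundary contributions since $M^n$ is closed.
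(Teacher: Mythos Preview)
Your proposal is correct and is exactly the argument the paper intends: the paper does not write out a separate proof for this theorem, but places it immediately after the proof of Theorem \ref{thm-shrinker} as its closed-manifold analogue, so the intended proof is precisely to rerun that argument with Proposition \ref{gen-for-2} and Proposition \ref{prop2.7} in place of their Dirichlet counterparts and with the index starting at $0$. Your handling of the self-shrinker identities, the Yang-type inequality, the Cheng--Yang recursion, and the cross-term cancellation via \eqref{lemma-3.1-4} all mirror the paper's template.
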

\begin{rem}Let $M^{n}$ be an $n$-dimensional complete self-shrinker with polynomial
volume growth in $\mathbb{R}^{n+1}$.     In \cite{CW}, Q.-M. Cheng and G. Wei proved the fact: If $M^{n}$ is noncompact manifolds with $A \leq 10/7
,$ which can  split into at most $m$ geodesic lines, then it is isometric to the
hyperplane $\mathbb{R}^{n}$ when $m=n$,
and $M^{n}$ is isometric to a cylinder $\mathbb{R}^{m}\times \mathbb{S}^{n-m}(\sqrt{n-m})$,
for $1\leq m \leq n-1$; If $M^{n}$ is compact, then it is isometric to the round sphere $\mathbb{S}^{n}(\sqrt{n})$.
Therefore, according to theorem \ref{thm-product-soliton-1} and the result of classification of self-shrinkers,
it is not difficult to estimate the upper bound for the gap of the consecutive eigenvalues Dirichlet problem \eqref{Eigenvalue-Problem}
of Laplacian with $f=\frac{|X|^{2}}{2}$.

\end{rem}

\begin{thm}Let $(M^{n},g)$ be an $n$-dimensional, compact Riemannian manifold and $\lambda_{i}$ be the $i$-th $(i=0,1,2,\cdots,k)$
eigenvalue of the eigenvalue problem \eqref{Eigen-Prob-closed} with $f=\frac{|X|^{2}}{4}$. Then, for any  $h\in C^{3}(\Omega)\cap C^{2}(\partial\Omega)$,  we have

\begin{equation*}\begin{aligned}\lambda_{k+1}-\lambda_{k}\leq
C_{n,\Omega}k^{\frac{1}{n}},\end{aligned}
\end{equation*}
where $$C_{n,\Omega}=4(\lambda_{1}+c_{1})\sqrt{\frac{C_{0}(n)}{n}}.$$ \end{thm}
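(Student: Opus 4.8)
The plan is to carry out, in the closed (compact) setting, the very same argument used above for the Gaussian shrinking soliton, with the Dirichlet general formulas of Section~\ref{sec2} replaced by their closed counterparts, Proposition~\ref{prop2.7} and Proposition~\ref{gen-for-2}. Since $M^{n}$ is compact and isometrically immersed in some $\mathbb{R}^{n+p}$, I would take as test functions the standard coordinate functions $F_{j}(x)=x^{j}$, $j=1,\dots,n+p$, of the ambient Euclidean space, exactly as in the Gaussian and self-shrinker theorems, and feed them into both closed formulas.

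First I would apply Proposition~\ref{prop2.7} with $h=x^{\alpha}$, sum over $\alpha=1,\dots,n+p$, and invoke the identities \eqref{lemma-3.1-1}--\eqref{lemma-3.1-4} of Lemma~\ref{lem3.1}. As in \eqref{Yang-type-ineq}, the cross terms collapse and one obtains a H.-C. Yang-type inequality
\begin{equation*}
\sum_{i=0}^{k}(\overline{\lambda}_{k+1}-\overline{\lambda}_{i})^{2}\leq\frac{4}{n}\sum_{i=0}^{k}(\overline{\lambda}_{k+1}-\overline{\lambda}_{i})(\overline{\lambda}_{i}+c_{1}),
\end{equation*}
where $c_{1}=\tfrac14\max_{M^{n}}\bigl(n^{2}H^{2}+2|\Delta_{f}f|+|\nabla f|^{2}\bigr)$. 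The key computation is to evaluate this potential for $f=\tfrac{|X|^{2}}{4}$: using $\nabla f=\tfrac12 X^{T}$ and $\Delta x^{\alpha}=nH^{\alpha}$ together with Lemma~\ref{lem3.1}, I would get $|\nabla f|^{2}=\tfrac14|X^{T}|^{2}$ and $\Delta f=\tfrac{n}{2}\bigl(1+\langle X,\vec H\rangle\bigr)$, so that $c_{1}$ is controlled by the mean curvature and position vector of the immersion, in parallel with the constant appearing in the Gaussian case.

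With the Yang-type inequality in hand, I would set $\mu_{i+1}=\overline{\lambda}_{i}+c_{1}>0$ and invoke the Cheng--Yang recursion formula exactly as in Lemma~\ref{lem4.2}, obtaining the explicit upper bound $\overline{\lambda}_{k+1}+c_{1}\leq(1+\tfrac{4}{n})(\overline{\lambda}_{1}+c_{1})k^{2/n}$. Independently, I would apply the gap formula Proposition~\ref{gen-for-2} to the same coordinate functions; here $a_{j}^{2}=\int_{M^{n}}|\nabla x^{j}|^{2}u_{i}^{2}e^{-f}dv$, so \eqref{lemma-3.1-1} yields $\sum_{j}a_{j}^{2}=\int_{M^{n}}n\,u_{i}^{2}e^{-f}dv=n$, and treating the coordinate frame as in the flat model forces the denominator $n\alpha^{2}+\sum_{j}b_{j}$ down to $2n$. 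The right-hand side of \eqref{general-formula-4} then simplifies to
\begin{equation*}
(\overline{\lambda}_{k+2}-\overline{\lambda}_{k+1})^{2}\leq\frac{16}{n}(\overline{\lambda}_{k+2}+c_{1})(\overline{\lambda}_{1}+c_{1}).
\end{equation*}

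Finally, substituting the recursion bound into this gap inequality gives $\overline{\lambda}_{k+1}-\overline{\lambda}_{k}\leq 4(\overline{\lambda}_{1}+c_{1})\sqrt{C_{0}(n)/n}\,k^{1/n}$, which is the asserted estimate with $C_{n,\Omega}=4(\lambda_{1}+c_{1})\sqrt{C_{0}(n)/n}$ and $C_{0}(n)$ as in \eqref{Cheng-Yang-ineq}. The main obstacle I anticipate is twofold: correctly evaluating the potential term $2|\Delta_{f}f|+|\nabla f|^{2}$ for the Gaussian weight $f=\tfrac{|X|^{2}}{4}$ (distinct from the self-shrinker weight $\tfrac{|X|^{2}}{2}$), and verifying that the coordinate-function data genuinely force $n\alpha^{2}+\sum_{j}b_{j}$ to collapse to $2n$ so that the constant reduces to $4\sqrt{C_{0}(n)/n}$; the remaining work is the routine but error-prone bookkeeping of the closed-problem indexing from $i=0$ and the shift $\mu_{i+1}=\overline{\lambda}_{i}+c_{1}$ when applying the recursion.
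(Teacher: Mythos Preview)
The paper does not supply a separate proof for this theorem; it is stated without argument between Theorem~\ref{thm-product-soliton-1} and the remark on $2$-dimensional self-shrinkers, and the reader is evidently expected to reproduce the template used for the Gaussian shrinker and Theorem~\ref{thm-shrinker}. Your proposal follows exactly that template (closed analogues of the general formulas, Lemma~\ref{lem3.1}, Cheng--Yang recursion), so in that sense it matches the paper's intended approach.

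That said, the obstacle you flag is real, and it is more serious than ``error-prone bookkeeping.'' The clean constant $4\sqrt{C_{0}(n)/n}$ in the Gaussian case came from $|\nabla x^{j}|\equiv 1$ on $\mathbb{R}^{n}$, which forces $a_{j}^{2}=b_{j}=1$ and hence $\sum_{j}(a_{j}^{2}+b_{j})/2=n$. For a general compact $M^{n}$ isometrically immersed in $\mathbb{R}^{n+p}$, the individual $|\nabla x^{j}|^{2}$ are not constant (only their sum equals $n$), and in fact Cauchy--Schwarz gives $a_{j}^{2}=\int|\nabla x^{j}|^{2}u_{i}^{2}e^{-f}dv\leq\sqrt{\int|\nabla x^{j}|^{4}u_{i}^{2}e^{-f}dv}=b_{j}$, the \emph{reverse} of the hypothesis $a_{j}^{2}\geq b_{j}$ required in Proposition~\ref{gen-for-2}. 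This is precisely why the paper, in the general closed result (Corollary~\ref{corr-3.3}) and in the self-shrinker theorem, introduces weights $\alpha_{j}$ and states the constant in the heavier form involving $\overline{\alpha},\alpha,\beta$ rather than the clean $4\sqrt{C_{0}(n)/n}$.

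So your strategy is the paper's strategy, but the reduction of the constant to $4(\lambda_{1}+c_{1})\sqrt{C_{0}(n)/n}$ cannot be obtained for an arbitrary compact $(M^{n},g)$ by simply taking $F_{j}=x^{j}$; it requires either the flat model or some additional hypothesis (e.g., that $M^{n}$ is a round sphere, the only compact self-shrinker, where one can exploit extra symmetry). The theorem as printed already shows signs of editorial slippage (the dangling ``for any $h\in C^{3}(\Omega)\cap C^{2}(\partial\Omega)$,'' the use of $\Omega$ and $\partial\Omega$ on a closed manifold, $\lambda_{i}$ in place of $\overline{\lambda}_{i}$), so the likeliest reading is that the stated constant is aspirational and the rigorous version is the one in Corollary~\ref{corr-3.3} with the weights. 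Your write-up would be strengthened by saying this explicitly rather than hoping the denominator collapses.
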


\begin{rem}
Let $M^{2}$
be a $2$-dimensional complete self-shrinker in $\mathbb{R}^{3}$ with constant squared norm  of the second fundamental form,
Cheng and Ogata gave a complete classification as follows (see  \cite{CO}): $M^{2}$ is isometric to  $\mathbb{R}^{2}$,
or a cylinder $\mathbb{S}^{1}(1)\times \mathbb{R}$, or the round sphere $\mathbb{S}^{2}(2)$. Given some topological conditions( for example, the manifold can be
made such assumption that it is compact or can
be splitted into one geodesic line and so on), by theorem \ref{thm-product-soliton-1},
one can obtain the similar estimates for the gap of consecutive eigenvalues of Dirichlet problem \eqref{Eigenvalue-Problem} of drifting Laplacian with $f=\frac{|X|^{2}}{4}$
since eigenvalues is isometrically invariant.

\end{rem}

\section{Eigenvalues on the Complete Product Riemannian Manifolds}\label{sec7}

We let $M^{n}$ be an $n$-dimensional complete Riemannian manifold with
$\infty-$Bakry-Emery curvature ${\rm Ric}^{f}\geq0$,   and
$f\in C^{2} (M^{n})$ be bounded above uniformly on $M^{n}$. Under those assumption, F.-Q. Fang, X.-D. Li and Z.-L. Zhang \cite{FLZ} proved that it
splits isometrically as $\mathbb{N}^{n-m}\times \mathbb{R}^{m}$, where
$\mathbb{N}^{n-m}$ is some complete Riemannian manifold without lines and
$\mathbb{R}^{m}$ is the $m$-dimensional Euclidean space. Therefore, based on the above argument, we can prove the following:
\vskip 3mm

\begin{prop}\label{thm1-z}~Let $(M^{n},g,d\mu)$ be an $n$-dimensional complete
metric measure space with $\infty-$Bakry-Emery curvature ${\rm
Ric}^{f}\geq0$ and $f\in C^{2} (M^{n})$ be bounded above uniformly
on $M^{n}$. Assume that $\lambda_{i}$ is the $i$-th
$(i=1,2,\cdots,k)$ eigenvalue of the Dirichlet problem \eqref{Eigenvalue-Problem}, then there exists a positive integer $m$, where
$1\leq m\leq n$, such that

\begin{equation}\begin{aligned}\label{prod-eigen}\lambda_{k+1}-\lambda_{k}
\leq C(m,\Omega,k)k^{\frac{1}{m}},
\end{aligned}\end{equation}where $$C(m,\Omega,k)=\sqrt{\frac{4\left( C_{0}(m)\left(\lambda_{1}+c_{2}\right)\right)}{m}}
\cdot\sqrt{4\lambda_{1}+\max_{\Omega}\{|\nabla f|^{2}\}+4m\max_{\Omega}\{|\nabla f|\}\sqrt{C_{0}(m)\left(\lambda_{1}+c_{2}\right)}k^{\frac{1}{m}}},$$
and$$c_{2}=\frac{1}{4}\max_{\Omega}\{|\nabla f|^{2}\}+m\max_{\Omega}\{|\nabla f|\}\lambda_{k+1}^{\frac{1}{2}}.$$
\end{prop}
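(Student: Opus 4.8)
The plan is to exploit the structure theorem of Fang, Li and Zhang quoted immediately above the statement: the hypothesis $\mathrm{Ric}^f\ge0$ together with $f$ bounded above forces an isometric splitting $M^n=\mathbb N^{n-m}\times\mathbb R^m$ for some $1\le m\le n$. The whole role of the Bakry--\'Emery condition is to produce this splitting; once it is in hand the argument is purely computational, driven by the trial functions coming from the flat factor. Concretely, I would take $x^1,\dots,x^m$ to be the standard coordinate functions on the Euclidean factor $\mathbb R^m$. Pulled back to $M^n$ they satisfy $|\nabla x^j|=1$, $\langle\nabla x^i,\nabla x^j\rangle=\delta_{ij}$ and $\Delta x^j=0$, hence $\Delta_f x^j=-\langle\nabla f,\nabla x^j\rangle$; moreover $\sum_{j=1}^m|\nabla x^j|^2=m$ and $\sum_{j=1}^m\langle\nabla x^j,\nabla u\rangle^2=|\nabla^{\mathbb R^m}u|^2\le|\nabla u|^2$ for every $u$. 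These are the intrinsic analogues of Lemma~\ref{lem3.1}.

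First I would feed $h=x^j$ into the Xia--Xu formula \eqref{gen-for} and sum over $j=1,\dots,m$. Since $\sum_{j}\|u_i\nabla x^j\|_\Omega^2=m\int_\Omega u_i^2e^{-f}dv=m$, the left side becomes $m\sum_{i=1}^k(\lambda_{k+1}-\lambda_i)^2$. On the right side I expand $\|2\langle\nabla x^j,\nabla u_i\rangle-u_i\langle\nabla f,\nabla x^j\rangle\|_\Omega^2$ and sum: the leading term gives $4\int_\Omega|\nabla^{\mathbb R^m}u_i|^2e^{-f}dv\le4\lambda_i$, the zeroth-order term gives $\int_\Omega u_i^2|\nabla^{\mathbb R^m}f|^2e^{-f}dv\le\max_\Omega|\nabla f|^2$, and the cross term, bounded crudely term by term via Cauchy--Schwarz, contributes at most $4m\max_\Omega|\nabla f|\,\lambda_i^{1/2}\le4m\max_\Omega|\nabla f|\,\lambda_{k+1}^{1/2}$. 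Collecting these yields
\[
\sum_{i=1}^k(\lambda_{k+1}-\lambda_i)^2\le\frac4m\sum_{i=1}^k(\lambda_{k+1}-\lambda_i)(\lambda_i+c_2),\qquad c_2=\tfrac14\max_\Omega|\nabla f|^2+m\max_\Omega|\nabla f|\,\lambda_{k+1}^{1/2}.
\]
Applying the Cheng--Yang recursion formula in dimension $m$ with the shift $\mu_i=\lambda_i+c_2$ then gives $\lambda_{k+1}+c_2\le C_0(m)(\lambda_1+c_2)k^{2/m}$, and likewise $\lambda_{k+2}+c_2\le C_0(m)(\lambda_1+c_2)(k+1)^{2/m}$.

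For the gap itself I would invoke Proposition~\ref{prop2.3} with $i=1$, $\tau=c_2$, $l=m$ and the same $F_j=x^j$. Here $a_j^2=b_j=1$, so $\sum_{j}\tfrac{a_j^2+b_j}{2}=m$ and the hypothesis $a_j^2\ge b_j$ is met; the harmonicity $\Delta x^j=0$ also removes the terms that in the proof of Theorem~\ref{thm1.1} had to be killed by conditions \eqref{sum-0}--\eqref{sum-01}. The resulting inequality reads $m(\lambda_{k+2}-\lambda_{k+1})^2\le4(\lambda_{k+2}+c_2)\sum_{j}\|2\langle\nabla x^j,\nabla u_1\rangle+u_1\Delta_f x^j\|_\Omega^2$, and the $j$-sum is estimated exactly as before (now with $i=1$) and then bounded, using $\lambda_1^{1/2}\le\lambda_{k+1}^{1/2}\le\sqrt{C_0(m)(\lambda_1+c_2)}\,k^{1/m}$, by $4\lambda_1+\max_\Omega|\nabla f|^2+4m\max_\Omega|\nabla f|\sqrt{C_0(m)(\lambda_1+c_2)}\,k^{1/m}$. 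Inserting the recursion bound for $\lambda_{k+2}+c_2$, dividing by $m$ and taking square roots gives $\lambda_{k+2}-\lambda_{k+1}\le C(m,\Omega,k)(k+1)^{1/m}$; relabelling the index as elsewhere in the paper produces \eqref{prod-eigen}.

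The main obstacle is not any single estimate but keeping the constant honest through the two uses of the recursion: $c_2$ itself contains $\lambda_{k+1}^{1/2}$, so I must be careful that the bound $\lambda_{k+1}\le\lambda_{k+1}+c_2\le C_0(m)(\lambda_1+c_2)k^{2/m}$ is used only as a valid one-sided inequality and never solved circularly, and that the same constant $C_0(m)$ and shift $c_2$ are used consistently at the indices $k$ and $k+1$. The other delicate point is the cross term: the factor $m$ in $c_2$ arises precisely from estimating $\sum_{j=1}^m\langle\nabla x^j,\nabla u_i\rangle\langle\nabla f,\nabla x^j\rangle$ term by term rather than contracting it to $\langle\nabla^{\mathbb R^m}u_i,\nabla^{\mathbb R^m}f\rangle$, so this coarser bound must be chosen deliberately in order to land on the stated form of $C(m,\Omega,k)$.
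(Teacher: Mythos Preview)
Your proposal is correct and follows essentially the same route as the paper: invoke the Fang--Li--Zhang splitting to obtain the Euclidean factor $\mathbb R^m$, use the coordinate functions $x^j$ as trial functions in both the Xia--Xu formula \eqref{gen-for} and Proposition~\ref{prop2.3} (with $a_j^2=b_j=1$), estimate the cross term via Cauchy--Schwarz to produce the constant $c_2$, and close with the Cheng--Yang recursion. The paper cites \cite{Z1} for the Yang-type inequality that you derive directly from \eqref{gen-for}, and it routes the final bound on $\lambda_1^{1/2}$ through $(\lambda_{k+2}+\tau)^{1/2}$ rather than $\lambda_{k+1}^{1/2}$, but these are cosmetic differences; your handling of the $a_j^2\ge b_j$ hypothesis and your remark on the one-sided use of the recursion with the $\lambda_{k+1}$-dependent shift $c_2$ are in fact more explicit than the paper's own argument.
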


\begin{proof}Since the
$\infty-$Bakry-Emery curvature ${\rm Ric}^{f}$ is nonnegative and
$f\in C^{2} (M^{n})$ is bounded above uniformly on $M^{n}$, by
Theorem 1.1 in \cite{FLZ}, we know that the Bakry-\'{E}mery-Hadamard
manifold splits isometrically as $\mathbb{N}^{n-m}\times
\mathbb{R}^{m}$ , where $\mathbb{N}^{n-m}$ is some complete Riemannian
manifold without lines and $\mathbb{R}^{m}$ is the $m$-dimensional
Euclidean space. Since the eigenvalue of the Dirichlet problem is an
invariant of isometries, the remainder part of the proof is only to
show that the inequality \eqref{prod-eigen} holds on
Bakry-\'{E}mery-Hadamard product manifolds
$\mathbb{R}^{m}\times\mathbb{N}^{n-m}$. For any $j=1,2,\cdots,m$, let

\begin{equation*}
F_{j}=h_{j}(\mathbf{x},\mathbf{y})=h_{j}((x^{1},x^{2},\ldots,x^{m}),y)=x^{j},
\end{equation*}where $\mathbf{x}=(x^{1},x^{2},\ldots,x^{m})$ and $x^{j}$ is the $j$-th coordinate function. Then, we have
\begin{equation}\label{2.1}\Delta_{f}h_{j}(\mathbf{x},\mathbf{y})=\Delta x^{j}+\langle\nabla f,\nabla x^{p}\rangle=\langle\nabla f,\nabla x^{j}\rangle\leq|\nabla f|,\end{equation}
\begin{equation}\label{2.2} |\nabla h_{j}(\mathbf{x},\mathbf{y})|^{2}=1,\end{equation}
\begin{equation}\label{2.3}\sum_{p=1}^{m}\langle\nabla h_{j}(\mathbf{x},\mathbf{y}),\nabla u_{i}\rangle^{2}
\leq\sum_{j=1}^{m}\langle\nabla u_{i},\nabla u_{i}\rangle.\end{equation}
Hence, we have $|\nabla x^{j}|=1$ and $\Delta x^{j}=0$. Let $l=n$, then, from \eqref{general-formula-2}, we
have

\begin{equation*}a_{j}=\sqrt{\|\nabla F_{j}u_{i}\|_{\Omega}^{2}}=\sqrt{\|\nabla x_{j}u_{i}\|_{\Omega}^{2}}=1,\end{equation*}

\begin{equation*}b_{j}=\sqrt{\||\nabla F_{j}|^{2}u_{i}\|_{\Omega}^{2}}=\sqrt{\||\nabla x_{j}|^{2}u_{i}\|_{\Omega}^{2}}=1.\end{equation*}
Thus, we have
\begin{equation*}
a_{j}=b_{j}.\end{equation*}By the Cauchy-Schwarz inequality, we have

\begin{equation}\label{prod-7.8}\begin{aligned}\sum^{m}_{j=1}\|u_{i}\Delta_{f}h_{j}(\mathbf{x},\mathbf{y})+2\langle\nabla
h_{j}(\mathbf{x},\mathbf{y}),\nabla u_{i}\rangle\|_{\Omega}^{2}
\leq4\lambda_{i}+\max_{\Omega}\{|\nabla f|^{2}\}+4m\max_{\Omega}\{|\nabla f|\}\lambda_{i}^{\frac{1}{2}}.\end{aligned}\end{equation}
Let $l=m$, $F_{j}=h_{j}(\mathbf{x},\mathbf{y})$, by proposition \ref{prop2.3}, we have

\begin{equation*}\begin{aligned}\sum_{j=1}^{m}\frac{a_{j}^{2}+b_{j}}{2}\left(\lambda_{k+2}-\lambda_{k+1}\right)^{2}
\leq4(\lambda_{k+2}+\tau)\sum_{j=1}^{m}\|2\langle\nabla F_{j},\nabla u_{i}\rangle+u_{i}\Delta_{f}F_{j}\|^{2}
,
\end{aligned}\end{equation*}
which implies that

\begin{equation}\begin{aligned}\label{prod-7.9}m\left(\lambda_{k+2}-\lambda_{k+1}\right)^{2}
\leq4(\lambda_{k+2}+\tau)\sum_{j=1}^{m}\|2\langle\nabla F_{j},\nabla u_{i}\rangle+u_{i}\Delta_{f}F_{j}\|^{2}
.
\end{aligned}\end{equation}
From \eqref{prod-7.8} and \eqref{prod-7.9}, it is not difficult to see that,  for any $i=1,2,\cdots$,

\begin{equation}\begin{aligned}\label{prod-7.10}\lambda_{k+2}-\lambda_{k+1}
\leq\sqrt{\frac{4(\lambda_{k+2}+\tau)}{m}}\cdot\sqrt{4\lambda_{i}+\max_{\Omega}\{|\nabla f|^{2}\}+4m\max_{\Omega}\{|\nabla f|\}\lambda_{i}^{\frac{1}{2}}}
.
\end{aligned}\end{equation}Recall that the autor proved the following eigenvalue inequality in \cite{Z1}:
\begin{equation*}
\begin{aligned}
 \sum^{k}_{i=1}(\lambda_{k+1}-\lambda_{i})^{2}
\leq\frac{4}{m}\sum^{k}_{i=1}(\lambda_{k+1}-\lambda_{i})\left(\lambda_{i}+c_{1}\right),
\end{aligned}
\end{equation*}where $$c_{1}=\frac{1}{4}\max_{\Omega}\{|\nabla f|^{2}\}+m\max_{\Omega}\{|\nabla f|\}\lambda_{i}^{\frac{1}{2}}.$$Therefore, we have

\begin{equation}
\begin{aligned}\label{thm-z-6}
 \sum^{k}_{i=1}(\lambda_{k+1}-\lambda_{i})^{2}
\leq\frac{4}{m}\sum^{k}_{i=1}(\lambda_{k+1}-\lambda_{i})\left(4\lambda_{i}+c_{2}\right),
\end{aligned}
\end{equation}where $$c_{2}=\frac{1}{4}\max_{\Omega}\{|\nabla f|^{2}\}+m\max_{\Omega}\{|\nabla f|\}\lambda_{k+1}^{\frac{1}{2}}.$$
Therefore, by Q.-M. Cheng and H.-C. Yang's recursion formula and \eqref{thm-z-6}, we yield
\begin{equation}
\begin{aligned}\label{prod-upper}
 \lambda_{k+1}+c_{2}
\leq C_{0}(m)\left(\lambda_{1}+c_{2}\right)k^{\frac{2}{m}}.
\end{aligned}
\end{equation}Putting $\tau=c_{2}$,  and utilizing \eqref{prod-7.10} and \eqref{prod-upper}, then one can infer that

\begin{equation}\begin{aligned}\label{prod-7.11}\lambda_{k+2}-\lambda_{k+1}&
\leq\sqrt{\frac{4(\lambda_{k+2}+\tau)}{m}}\cdot\sqrt{4\lambda_{1}+\max_{\Omega}\{|\nabla f|^{2}\}+4m\max_{\Omega}\{|\nabla f|\}(\lambda_{k+2}+\tau)^{\frac{1}{2}}}
\\&
\leq\sqrt{\frac{4\left( C_{0}(m)\left(\lambda_{1}+c_{2}\right)(k+1)^{\frac{2}{m}}\right)}{m}}
\\&\cdot\sqrt{4\lambda_{1}+\max_{\Omega}\{|\nabla f|^{2}\}+4m\max_{\Omega}\{|\nabla f|\}\left( C_{0}(m)\left(\lambda_{1}+c_{2}\right)(k+1)^{\frac{2}{n}}\right)^{\frac{1}{2}}}
\\&
=\sqrt{\frac{4\left( C_{0}(m)\left(\lambda_{1}+c_{2}\right)\right)}{m}}
\\&\cdot\sqrt{4\lambda_{1}+\max_{\Omega}\{|\nabla f|^{2}\}+4m\max_{\Omega}\{|\nabla f|\}\sqrt{C_{0}(m)\left(\lambda_{1}+c_{2}\right)}(k+1)^{\frac{1}{m}}}
\cdot(k+1)^{\frac{1}{m}}\\&
=C(n,\Omega,k)(k+1)^{\frac{1}{m}},
\end{aligned}\end{equation}where $$C(m,\Omega,k)=\sqrt{\frac{4\left( C_{0}(m)\left(\lambda_{1}+c_{2}\right)\right)}{m}}
\cdot\sqrt{4\lambda_{1}+\max_{\Omega}\{|\nabla f|^{2}\}+4m\max_{\Omega}\{|\nabla f|\}\sqrt{C_{0}(m)\left(\lambda_{1}+c_{2}\right)}(k+1)^{\frac{1}{m}}}.$$
Hence, we complete the proof of the
proposition.
\end{proof}

If we replace the condition of ${\rm Ric}^{f}\geq0$ by
${\rm Ric}^{f}_{l,n}\geq0$, then the condition that $f$ bounded
above uniformly on $M^{n}$ can be removed.
Similarly, by using the same method as
proposition \ref{thm1-z} and noticing lemma 2.6 in \cite{FLZ}, it is
not difficult to  give the proof of the following proposition:
\begin{prop}\label{thm2-z}~Let $(M^{n},g,d\mu)$ be an $n$-dimensional, connected, complete
Bakry-\'{E}mery manifold with $l$-Bakry-\'{E}mery curvature ${\rm
Ric}^{f}_{l,n}\geq0$. Assume that $\lambda_{i}$ is the $i$-th
$(i=1,2,\cdots,k)$ eigenvalue of the Dirichlet problem \eqref{Eigenvalue-Problem}, then there exists a positive integer $m$, where
$1\leq m\leq n$, such that

\begin{equation*}\begin{aligned}\lambda_{k+1}-\lambda_{k}
\leq C(m,\Omega,k)k^{\frac{1}{m}},
\end{aligned}\end{equation*}where $$C(m,\Omega,k)=\sqrt{\frac{4\left( C_{0}(m)\left(\lambda_{1}+c_{2}\right)\right)}{m}}
\cdot\sqrt{4\lambda_{1}+\max_{\Omega}\{|\nabla f|^{2}\}+4m\max_{\Omega}\{|\nabla f|\}\sqrt{C_{0}(m)\left(\lambda_{1}+c_{2}\right)}k^{\frac{1}{m}}},$$
and$$c_{2}=\frac{1}{4}\max_{\Omega}\{|\nabla f|^{2}\}+m\max_{\Omega}\{|\nabla f|\}\lambda_{k+1}^{\frac{1}{2}}.$$
\end{prop}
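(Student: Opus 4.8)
The plan is to reduce the whole statement to Proposition~\ref{thm1-z} by first extracting an isometric splitting from the finite-dimensional curvature hypothesis. The essential observation is that the $l$-Bakry-\'{E}mery condition ${\rm Ric}^{f}_{l,n}\geq0$ is rigid enough to force a splitting theorem \emph{without} requiring $f$ to be bounded above uniformly; this is exactly the content of Lemma~2.6 in \cite{FLZ}. So the first step is to invoke that lemma to conclude that $(M^{n},g,d\mu)$ splits isometrically as $\mathbb{N}^{n-m}\times\mathbb{R}^{m}$ for some integer $1\leq m\leq n$, where $\mathbb{N}^{n-m}$ carries no lines and $\mathbb{R}^{m}$ is the standard Euclidean factor. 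Since the Dirichlet spectrum of the drifting Laplacian is an isometry invariant, it suffices to prove the gap estimate on this product.

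Second, I would run the argument of Proposition~\ref{thm1-z} verbatim on $\mathbb{N}^{n-m}\times\mathbb{R}^{m}$. Take the test functions $F_{j}=h_{j}(\mathbf{x},\mathbf{y})=x^{j}$, $j=1,\dots,m$, where $x^{j}$ are the standard coordinates on the Euclidean factor. Then $|\nabla h_{j}|^{2}=1$ and $\Delta_{f}h_{j}=\langle\nabla f,\nabla x^{j}\rangle\leq|\nabla f|$, exactly as in \eqref{2.1}--\eqref{2.3}, so that $a_{j}=b_{j}=1$ and $\sum_{j=1}^{m}\tfrac{a_{j}^{2}+b_{j}}{2}=m$. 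A Cauchy-Schwarz estimate then gives, as in \eqref{prod-7.8},
\begin{equation*}
\sum^{m}_{j=1}\|u_{i}\Delta_{f}h_{j}+2\langle\nabla h_{j},\nabla u_{i}\rangle\|_{\Omega}^{2}\leq4\lambda_{i}+\max_{\Omega}\{|\nabla f|^{2}\}+4m\max_{\Omega}\{|\nabla f|\}\lambda_{i}^{1/2}.
\end{equation*}
Feeding this into Proposition~\ref{prop2.3} with $l=m$ and $\tau=c_{2}$ produces the intermediate gap inequality \eqref{prod-7.10}.

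Third, to convert \eqref{prod-7.10} into the stated $k^{1/m}$ bound I would use the Yang-type inequality of \cite{Z1} together with the Cheng-Yang recursion formula, precisely as in \eqref{thm-z-6}--\eqref{prod-upper}, obtaining $\lambda_{k+1}+c_{2}\leq C_{0}(m)(\lambda_{1}+c_{2})k^{2/m}$. Substituting this upper bound for $\lambda_{k+2}$ back into \eqref{prod-7.10} and simplifying exactly as in \eqref{prod-7.11} yields $\lambda_{k+1}-\lambda_{k}\leq C(m,\Omega,k)k^{1/m}$ with the claimed constant $C(m,\Omega,k)$.

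The main obstacle, and indeed the only genuine difference from Proposition~\ref{thm1-z}, is justifying the isometric splitting under ${\rm Ric}^{f}_{l,n}\geq0$ when no boundedness assumption on $f$ is available. All of the subsequent analytic steps are identical to the $\infty$-Bakry-\'{E}mery case treated in Proposition~\ref{thm1-z}, because once the Euclidean factor is in hand the coordinate functions satisfy the same identities regardless of the ambient curvature normalization; the finite-dimensional bound is used solely to supply the splitting via Lemma~2.6 of \cite{FLZ}.
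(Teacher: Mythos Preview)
Your proposal is correct and matches the paper's approach exactly: the paper itself states that the proof follows by the same method as Proposition~\ref{thm1-z} together with Lemma~2.6 of \cite{FLZ}, which is precisely what you outline. You have also correctly identified that the sole role of the finite-dimensional curvature hypothesis is to supply the splitting without the upper bound on $f$, after which the analytic steps are identical.
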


\begin{rem}\label{rem-7.1}In the proofs of proposition \ref{thm1-z} and proposition \ref{thm2-z}, the weighted coefficients  are assumed that $a_{j}=1$
for any $j=1,2,\cdots,l$.\end{rem}

\begin{rem}\label{rem-7.1}Under the assumptions of proposition \ref{thm1-z} and proposition \ref{thm2-z}, Riemannian manifold $M^{n}$
splits isometrically as $\mathbb{N}^{n-m}\times \mathbb{R}^{m}$. Therefore, the integer $m$ in proposition \ref{thm1-z} and proposition \ref{thm2-z} is exactly the dimension of
the Euclidean space $\mathbb{R}^{m}$.\end{rem}
\begin{rem}\label{rem-7.2}Suppose that   $(M^{n},g,f)~(n\geq4)$, is a complete, shrinking, gradient Ricci soliton with harmonic Weyl tenson,
then, $M^{n}=N^{k}\times\mathbb{R}^{n-k}$, where $N^{k}$ is an Einstein manifold (cf. \cite{FG,MS}).
Therefore, by the same method as the proof of proposition \ref{thm1-z}, it is not difficult to
obtain a similar estimate for the consecutive eigenvalues of drifting Laplacian on the soliton.\end{rem}

In order to generalize the trivial Ricci solitons, Petersen and Wylie  introduced the notion of
rigidity of gradient Ricci solitons in \cite{PW1}. A gradient soliton is said to be rigid if it is isometric to a
quotient of $\mathbb{N} \times \mathbb{R}^{k}$ where $\mathbb{N}$ is an Einstein manifold and $f = \frac{\rho}{2}|x|^{2}$ on the Euclidean factor.
That is, the Riemannian manifold $(M^{n}, g)$  is isometric to $\mathbb{N}\times_{\Gamma} \mathbb{R}^{k}$, where $\Gamma$ acts freely on $N$
and by orthogonal transformations on $\mathbb{R}^{k}$. Rigidity of gradient Ricci solitons has been studied in \cite{PW1,PW2}.

\begin{rem}
It is well known that Einstein manifolds have harmonic Weyl tensor. In fact, under some geometric implications,  a Ricci soliton has the assumption of
the harmonicity of the Weyl tensor. For example, F.-L. Manuel and G.-R. Eduardo {\rm\cite{ME2}} showed that
a compact Ricci soliton is rigid if and only if it has harmonic Weyl tensor, which gives a positive answer to Problem C.2 posed in \cite{ENM}. For the complete
noncompact case, F.-L. Manuel and G.-R. Eduardo proved that a gradient shrinking Ricci soliton is rigid if and only if it
has harmonic Weyl tensor, under the assumptions that the Ricci curvature is bounded from
below and the Riemannian curvature has at most exponential growth in \cite{ME2}. Therefore, by remark \ref{rem-7.2} and
the proof of  proposition  \ref{thm1-z}, one can obtain a similar estimate for
the consecutive eigenvalues of drifting Laplacian on those solitons with the above rigid and geometric conditions.
Let $(M^{n}, g,f)$ be an $n$-dimensional compact Ricci soliton with constant sectional curvature. Then, the Weyl tensor
vanishes \cite{ME2}. Therefore, for all of the compact Ricci soliton with constant sectional curvature, one can also obtain the similar eigenvalue inequality by the same argument. In addition,
by the other classifications of Ricci solitons, for example in \cite{MS,N}, one can obtain the corresponding eigenvalue inequality of drifting Laplacian on some
complete metric measure spaces.
\end{rem}

 If we consider the case that  $f$ is a constant, the drifting Laplacian is exactly the standard Laplacian on complete
Riemannian manifolds. Then, one can prove the following:
\begin{corr} Let $(M^{n},g,d\mu)$ be an $n$-dimensional complete
Riemannian manifold with Ricci curvature ${\rm
Ric} \geq0$ and $f\in C^{2} (M^{n})$ be bounded above uniformly
on $M^{n}$. Assume that $\lambda_{i}$ is the $i$-th
$(i=1,2,\cdots,k)$ eigenvalue of the Dirichlet problem \eqref{Eigen-Prob-Lapl}, then there exists a positive integer $m$, where
$1\leq m\leq n$, such that

\begin{equation*}\begin{aligned}\lambda_{k+1}-\lambda_{k}
\leq C(m,\Omega,k)k^{\frac{1}{m}},
\end{aligned}\end{equation*}where $$C(m,\Omega,k)=4\lambda_{1}\sqrt{\frac{   C_{0}(m)}{m}}
.$$
\end{corr}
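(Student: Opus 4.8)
The plan is to obtain this corollary as a direct specialization of Proposition \ref{thm1-z} to the case where the potential $f$ is constant. First I would observe that a constant $f$ has $\mathrm{Hess}\,f=0$, so the Bakry-\'Emery curvature satisfies ${\rm Ric}^f={\rm Ric}+\mathrm{Hess}\,f={\rm Ric}\geq0$; moreover a constant function is trivially bounded above uniformly on $M^n$. Hence the hypotheses of Proposition \ref{thm1-z} are met. At the same time the drifting Laplacian $\Delta_f=\Delta+\langle\nabla f,\cdot\rangle$ reduces to the standard Laplacian $\Delta$, so the Dirichlet problem \eqref{Eigenvalue-Problem} is exactly \eqref{Eigen-Prob-Lapl}, matching the eigenvalues $\lambda_i$ in the statement.

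Next I would invoke Proposition \ref{thm1-z} verbatim. It yields a positive integer $m$ with $1\leq m\leq n$, which by the Fang--Li--Zhang splitting theorem (degenerating here to the classical Cheeger--Gromoll splitting theorem, since ${\rm Ric}^f={\rm Ric}\geq0$) is the dimension of the Euclidean factor in the isometric splitting $M^n=\mathbb{N}^{n-m}\times\mathbb{R}^m$, together with the gap estimate $\lambda_{k+1}-\lambda_k\leq C(m,\Omega,k)\,k^{1/m}$, where $C(m,\Omega,k)$ is the constant displayed in that proposition. Because eigenvalues are isometry invariants, this inequality transfers back to $M^n$ itself.

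The only remaining step is the algebraic simplification of $C(m,\Omega,k)$ under $f\equiv\mathrm{const}$. Then $|\nabla f|=0$ on $\Omega$, so every occurrence of $\max_{\Omega}\{|\nabla f|\}$ and $\max_{\Omega}\{|\nabla f|^2\}$ vanishes; in particular the auxiliary constant collapses to $c_2=0$, and the $k$-dependence of $C(m,\Omega,k)$ — which entered solely through the gradient term $4m\max_{\Omega}\{|\nabla f|\}\sqrt{C_0(m)(\lambda_1+c_2)}\,k^{1/m}$ — disappears. The two surviving factors become $\sqrt{4C_0(m)\lambda_1/m}=2\sqrt{C_0(m)\lambda_1/m}$ and $\sqrt{4\lambda_1}=2\sqrt{\lambda_1}$, whose product is precisely $4\lambda_1\sqrt{C_0(m)/m}$, as claimed.

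There is no genuine analytic obstacle here, since the entire analytic content is inherited from Proposition \ref{thm1-z}; the proof is essentially a substitution. The only points deserving care are confirming that the vanishing of $|\nabla f|$ forces $c_2=0$ and that the two factors multiply to the stated closed form, and emphasizing that the integer $m$ (and thus the exponent $1/m$) is intrinsic to the isometric splitting rather than a free parameter.
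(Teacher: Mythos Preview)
Your proposal is correct and follows exactly the paper's approach: the paper obtains this corollary by specializing Proposition~\ref{thm1-z} to the case of constant $f$, so that $\mathrm{Ric}^f=\mathrm{Ric}\geq0$, the drifting Laplacian reduces to the Laplacian, and the gradient terms $|\nabla f|$, $|\nabla f|^2$ vanish, collapsing $c_2$ to $0$ and $C(m,\Omega,k)$ to $4\lambda_1\sqrt{C_0(m)/m}$. Your algebraic check of the two surviving square-root factors is accurate, and your remark that the Fang--Li--Zhang splitting degenerates to the classical Cheeger--Gromoll theorem is a nice clarification the paper does not make explicit.
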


\section{Appendix}
\vskip 3mm

In this appendix, we give the proof of theorem \ref{thm-product-soliton}.
\vskip 3mm

\textbf{\emph{Proof of theorem}} \ref{thm-product-soliton}. We denote the position vector of the $n$-dimensional unit round
cylinder $\mathbb{R}^{n-m}\times\mathbb{S}^{m}(1)$ in
$n+1$-dimensional Euclidean space $\mathbb{R}^{n+1}$ by $$\textbf{x}
=(\textbf{v},~\textbf{w})= (x^{1},
x^{2},\ldots,x^{n-m},x^{n-m+1},x^{n-m+2} \cdots,x^{n}, x^{n+1}),$$
where $\textbf{v}=(x^{1},
x^{2},\ldots,x^{n-m}),\textbf{w}=(x^{n-m+1},x^{n-m+2} \cdots,x^{n},
x^{n+1})$, and then, we obtain
\begin{equation}\label{4.3.54}\sum^{n+1}_{j=n-m+1}(x^{j})^{2}=1,~ \sum^{n+1}_{j=1}|\nabla x^{j}|^{2}=n, \end{equation} and
\begin{equation}\label{4.3.55}\Delta x^{j}=
\left\{ \begin{aligned}
     &0, &\textnormal{if}\ \ j=1,\cdots,n-m,   \\
                  &-mx^{j},  &\textnormal{if}\ \ j=n-m+1,\cdots,n+1.
                          \end{aligned} \right.
                          \end{equation}
For any $j~(j=1,2,\cdots,n+1)$,   let $l=n+1$ and
$F_{j}(x)=\delta_{j}x^{j}$ and $\delta_{j}>0$, such that

\begin{equation}\begin{aligned}\sum^{n+1}_{j=1}\int_{\Omega}u_{i}^{2}\Delta
(\delta_{j}x^{j})\langle \nabla
\left(\frac{(m-1)|\textbf{v}|^{2}_{\mathbb{R}^{n-m}}}{2}\right),\nabla
(\delta_{j}x^{j})\rangle
d\mu=0,\end{aligned}\end{equation}

\begin{equation}\begin{aligned}&(m-1)\sum^{n-m}_{j=1}\int_{\Omega}\langle\nabla
(\delta_{j}x^{j}),\nabla u_{i}\rangle
u_{i}(\delta_{j}x^{j})d\mu+m\sum^{n+1}_{j=n-m+1}\int_{\Omega}\langle\nabla (\delta_{j}x^{j}),\nabla
u_{i}\rangle u_{i}(\delta_{j}x^{j})d\mu\\&=\widetilde{\delta}^{2}\left[(m-1)\sum^{n-m}_{j=1}\int_{\Omega}\langle\nabla
x^{j},\nabla u_{i}\rangle
u_{i}x^{j}d\mu-4m\sum^{n+1}_{j=n-m+1}\int_{\Omega}\langle\nabla x^{j},\nabla
u_{i}\rangle u_{i}x^{j}d\mu\right],\end{aligned}\end{equation} and

\begin{equation*}a_{j}^{2}=\|\nabla F_{j}u_{i}\|^{2}\geq\sqrt{\||\nabla F_{j}|^{2}u_{i}\|^{2}}=b_{j}\geq0.\end{equation*} Let $$\delta=\min_{1\leq j\leq n+p}\{\delta_{j}\},$$$$\overline{\delta}=\max_{1\leq j\leq n+p}\{\delta_{j}\},$$
$$\gamma=\min_{1\leq j\leq n+p}\min_{\Omega}\{b_{j}\}.$$
Then, we have

\begin{equation}\begin{aligned}\label{eq-a-b}\sum_{j=1}^{l}\frac{a_{j}^{2}+b_{j}}{2}&=\sum_{j=1}^{n+1}\frac{\sqrt{\|\nabla (\delta_{j}x^{j})u_{i}\|^{2}}+\sqrt{\||\nabla (\delta_{j}x^{j})|^{2}u_{i}\|^{2}}}{2}\\&\geq\frac{1}{2}\left(n\delta^{2}+\sum^{n+1}_{j=1}b_{j}\right)
\\&\geq\frac{1}{2}\left(n\delta^{2}+(n+1)\gamma\right).
\end{aligned}\end{equation}
For any fixed point $x_{0}\in\Omega$, we can find a coordinate
system $(\widetilde{x}^{1},\widetilde{x}^{2},\cdots
\widetilde{x}^{n+1})$ of the $n$-dimensional unit round cylinder
$\mathbb{R}^{n-m}\times\mathbb{S}^{m}(1)$ such that at the point
$x_{0}$

\begin{equation}\label{4.3.58}\begin{aligned}&\widetilde{x}^{1}=\cdots=\widetilde{x}^{n}=0,~~\widetilde{x}^{n+1}=1,
\\&\nabla\widetilde{x}^{n+1}=0;\\&\nabla_{p}x^{q}=\delta^{q}_{p}~(p,q=1,2,\cdots,n+1).
\end{aligned}\end{equation}
In fact, we can choose a constant $(n+1)\times(n+1)$ type
orthonormal matrix $(a^{i}_{j})_{(n+1)\times(n+1)}$
satisfying\begin{equation*}\begin{aligned}\sum^{n+1}_{\alpha=1}a^{\alpha}_{p}a^{\alpha}_{q}=\delta_{pq},
\end{aligned}\end{equation*}
such that
\begin{equation*}\begin{aligned}x^{p}=\sum^{n+1}_{\alpha=1}a^{p}_{\alpha}\widetilde{x}^{\alpha},
\end{aligned}\end{equation*}
and (\ref{4.3.58}) is satisfied at the point $x_{0}$. Thus, at the
point $x_{0}$, we
have\begin{equation*}\begin{aligned}\sum^{n+1}_{p=1}\langle\nabla
x^{p},\nabla
u_{i}\rangle^{2}&=\sum^{n+1}_{p,q,\alpha=1}a^{\alpha}_{p}a^{\alpha}_{q}\langle\nabla\widetilde{x}^{p},\nabla
u_{i}\rangle\langle\nabla\widetilde{x}^{q},\nabla u_{i}\rangle
\\&=\sum^{n+1}_{p=1}\langle\nabla\widetilde{x}^{p},\nabla u_{i}\rangle^{2}
\\&=\sum^{n+1}_{p=1}\langle\nabla_{p}u_{i},\nabla_{p}u_{i}\rangle\\&=|\nabla
u_{i}|^{2}.\end{aligned}\end{equation*} Since $x_{0}$ is an
arbitrary point, we know that for any point $x\in\Omega$,
\begin{equation*}\begin{aligned}\sum^{n+1}_{p=1}\langle\nabla
x^{p},\nabla u_{i}\rangle^{2}=|\nabla
u_{i}|^{2}.\end{aligned}\end{equation*}
On the other hand, by using
(\ref{4.3.54}), we
have\begin{equation}\label{4.3.59}\begin{aligned}&\sum^{n+1}_{p=n-m+1}\nabla(x^{p})^{2}=0,\end{aligned}\end{equation}and
\begin{equation}\label{4.3.60}\begin{aligned}\sum^{n+1}_{p=n-m+1}|\nabla
x^{p}|^{2}=-\sum^{n+1}_{p=1}x^{p}\Delta x^{p}=m.
\end{aligned}\end{equation}
Let

\begin{equation}\begin{aligned}\label{a-ineq}\mathfrak{A}=\sum_{j=1}^{l}\|2\langle\nabla F_{j},\nabla u_{i}\rangle+u_{i}\Delta_{f}F_{j}\|^{2}&=\sum_{j=1}^{n+1}\|2\langle\nabla (\delta_{j}x^{j}),\nabla u_{i}\rangle+u_{i}\Delta_{f}(\delta_{j}x^{j})\|^{2}.\end{aligned}\end{equation}
Then, using \eqref{4.3.59} and \eqref{4.3.60}, we deduce

\begin{equation}\label{4.3.62}\begin{aligned}\mathfrak{A}&=\sum^{n+1}_{j=1} \|2\langle\nabla
(\delta_{j}x^{j}),\nabla u_{i}\rangle+u_{i}\Delta (\delta_{j}x^{j})-u_{i}\nabla
\left(\frac{(m-1)|\textbf{v}|^{2}_{\mathbb{R}^{n-m}}}{2}\right),\nabla
(\delta_{j}x^{j})\rangle\|_{\Omega}^{2}\\
&=4\sum^{n+1}_{j=1}\int_{\Omega}\langle\nabla
(\delta_{j}x^{j}),\nabla
u_{i}\rangle^{2}d\mu+m^{2}\sum^{n+1}_{j=n-m+1}\int_{\Omega}u_{i}^{2}(\delta_{j}x^{j})^{2}d\mu\\
&+(m-1)^{2}\sum^{n-m}_{j=1}\int_{\Omega}u_{i}^{2}(\delta_{j}x^{j})^{2}d\mu
-2\sum^{n+1}_{j=1}\int_{\Omega}u_{i}^{2}\Delta
(\delta_{j}x^{j})\langle \nabla
\left(\frac{(m-1)|\textbf{v}|^{2}_{\mathbb{R}^{n-m}}}{2}\right),\nabla
(\delta_{j}x^{j})\rangle
d\mu\\
&-4(m-1)\sum^{n-m}_{j=1}\int_{\Omega}\langle\nabla
(\delta_{j}x^{j}),\nabla u_{i}\rangle
u_{i}(\delta_{j}x^{j})d\mu-4m\sum^{n+1}_{j=n-m+1}\int_{\Omega}\langle\nabla (\delta_{j}x^{j}),\nabla
u_{i}\rangle u_{i}(\delta_{j}x^{j})d\mu.\end{aligned}\end{equation}
Furthermore, by the definitions of $\overline{\delta}$ and $\widetilde{\delta}$, we have

\begin{equation}\label{4.3.62}\begin{aligned}\mathfrak{A}
&\leq4\overline{\delta}^{2}\sum^{n+1}_{j=1}\int_{\Omega}\langle\nabla
x^{j},\nabla
u_{i}\rangle^{2}d\mu+m^{2}\overline{\delta}^{2}\sum^{n+1}_{j=n-m+1}\int_{\Omega}u_{i}^{2}(x^{j})^{2}d\mu\\
&+(m-1)^{2}\overline{\delta}^{2}\sum^{n-m}_{j=1}\int_{\Omega}u_{i}^{2}(x^{j})^{2}d\mu
-2\sum^{n+1}_{j=1}\int_{\Omega}u_{i}^{2}\Delta
(\delta_{j}x^{j})\langle \nabla
\left(\frac{(m-1)|\textbf{v}|^{2}_{\mathbb{R}^{n-m}}}{2}\right),\nabla
(\delta_{j}x^{j})\rangle
d\mu\\
&-4(m-1)\sum^{n-m}_{j=1}\int_{\Omega}\langle\nabla
(\delta_{j}x^{j}),\nabla u_{i}\rangle
u_{i}(\delta_{j}x^{j})d\mu-4m\sum^{n+1}_{j=n-m+1}\int_{\Omega}\langle\nabla (\delta_{j}x^{j}),\nabla
u_{i}\rangle u_{i}(\delta_{j}x^{j})d\mu\\
&=4\overline{\delta}^{2}\lambda_{i}+m^{2}\overline{\delta}^{2}\sum^{n+1}_{j=n-m+1}\int_{\Omega}u_{i}^{2}(x^{j})^{2}d\mu
+(m-1)^{2}\overline{\delta}^{2}\sum^{n-m}_{j=1}\int_{\Omega}u_{i}^{2}(x^{j})^{2}d\mu
\\
&-(m-1)\widetilde{\delta}^{2}\sum^{n-m}_{j=1}\int_{\Omega}\langle\nabla (x^{j})^{2},\nabla
(u_{i})^{2}\rangle d\mu-m\widetilde{\delta}^{2}\sum^{n+1}_{j=n-m+1}\int_{\Omega}\langle\nabla (x^{j})^{2},\nabla
(u_{i})^{2}\rangle d\mu\\
&=4\overline{\delta}^{2}\lambda_{i}+(m-1)^{2}\overline{\delta}^{2}\sum^{n+1}_{j=1}\int_{\Omega}u_{i}^{2}(x^{j})^{2}d\mu
+(2m-1)\overline{\delta}^{2}\sum^{n+1}_{j=n-m+1}\int_{\Omega}u_{i}^{2}(x^{j})^{2}d\mu\\
&-(m-1)\widetilde{\delta}^{2}\sum^{n+1}_{j=1}\int_{\Omega}\langle\nabla (x^{j})^{2},\nabla
(u_{i})^{2}\rangle d\mu-\widetilde{\delta}^{2}\sum^{n+1}_{j=n-m+1}\int_{\Omega}\langle\nabla (x^{j})^{2},\nabla
(u_{i})^{2}\rangle d\mu\\
&=4\lambda_{i}+\mathfrak{B}+(2m-1)\overline{\delta}^{2},\end{aligned}\end{equation} where

\begin{equation}\begin{aligned}\label{4.3.63}\mathfrak{B}&=(m-1)^{2}\overline{\delta}^{2}\sum^{n+1}_{j=1}\int_{\Omega}u_{i}^{2}(x^{j})^{2}d\mu
-(m-1)\widetilde{\delta}^{2}\sum^{n+1}_{j=1}\int_{\Omega}\langle\nabla (x^{j})^{2},\nabla
(u_{i})^{2}\rangle d\mu\\&-\widetilde{\delta}^{2}\sum^{n+1}_{j=n-m+1}\int_{\Omega}\langle\nabla (x^{j})^{2},\nabla
(u_{i})^{2}\rangle d\mu\\
&=(m-1)\sum^{n+1}_{j=1}\int_{\Omega}\left((m-1)\overline{\delta}^{2}u_{i}^{2}(x^{j})^{2}
-\widetilde{\delta}^{2}\langle\nabla (x^{j})^{2},\nabla (u_{i})^{2}\rangle\right) d\mu\\
&=(m-1)\int_{\Omega}u_{i}^{2}\left((m-1)\overline{\delta}^{2}|\textbf{x}|^{2}
+\widetilde{\delta}^{2}\Delta_{f}|\textbf{x}|^{2}\right) d\mu.
\end{aligned}\end{equation}Uniting (\ref{4.3.54}), (\ref{4.3.55}), (\ref{4.3.59}) and (\ref{4.3.60}), we have
$$\sum^{n+1}_{p=1}\Delta(x^{p})^{2}=2(n-m).$$ By a direct computation, we yield
\begin{equation}\label{4.3.64}\begin{aligned}\Delta_{f}|\textbf{x}|^{2}=2(n-1)-2(m-1)|\textbf{x}|^{2}.\end{aligned}\end{equation}
Substituting
(\ref{4.3.62}), (\ref{4.3.63})  and (\ref{4.3.64})
into \eqref{a-ineq}, we obtain

\begin{equation}\label{factor-ineq}\begin{aligned}&\sum^{n+1}_{p=1} \|2\langle\nabla
x^{p},\nabla u_{i}\rangle+u_{i}\Delta x^{p}-u_{i}\langle \nabla
\left(\frac{(m-1)|\textbf{v}|^{2}_{\mathbb{R}^{n-m}}}{2}\right),\nabla
x^{p}\rangle\|_{\Omega}^{2}
\\&=(m-1)\int_{\Omega}u_{i}^{2}\left((m-1)\overline{\delta}^{2}|\textbf{x}|^{2}+\widetilde{\delta}^{2}[2(n-1)-2(m-1)|\textbf{x}|^{2}]\right)d\mu
+(2m-1)\overline{\delta}^{2}+4\overline{\delta}^{2}\lambda_{i}
\\&=(m-1)^{2}\left(\overline{\delta}^{2}-2\widetilde{\delta}^{2}\right)\int_{\Omega}u_{i}^{2}|\textbf{x}|^{2}d\mu
+(m-1)(2n-1)\widetilde{\delta}^{2}+(2m-1)\overline{\delta}^{2}+4\overline{\delta}^{2}\lambda_{i}
\\&\leq(m-1)^{2}\max_{\Omega}\left(\overline{\delta}^{2}-2\widetilde{\delta}^{2}\right)
|\textbf{x}|^{2}+(m-1)(2n-1)\widetilde{\delta}^{2}+(2m-1)\overline{\delta}^{2}+4\overline{\delta}^{2}\lambda_{i}
\\&\leq(m-1)^{2}\left(\overline{\delta}^{2}+2\widetilde{\delta}^{2}\right)+(m-1)(2n-1)\widetilde{\delta}^{2}+(2m-1)\overline{\delta}^{2}+4\overline{\delta}^{2}\lambda_{i}.
\end{aligned}\end{equation}
On the other hand, we have

\begin{equation}\begin{aligned} .\end{aligned}\end{equation}
By the recursion formula given by Q.-M. Cheng and H.-C. Yang in \cite{CY@}, we have
Let

\begin{equation*}\begin{aligned}c&=\frac{1}{4\overline{\delta}^{2}}\left[(m-1)^{2}\left(\overline{\delta}^{2}+2\widetilde{\delta}^{2}\right)
+(m-1)(2n-1)\widetilde{\delta}^{2}+(2m-1)\overline{\delta}^{2} \right]\\&
=\frac{(m-1)^{2}\left(\overline{\delta}^{2}+2\widetilde{\delta}^{2}\right)}{4\overline{\delta}^{2}}
+\frac{(m-1)(2n-1)\widetilde{\delta}^{2}}{4\overline{\delta}^{2}}+\frac{(2m-1)}{4}.\end{aligned}\end{equation*} Then, we deduce from \eqref{eq-a-b} and \eqref{general-formula-2} that,

\begin{equation}\begin{aligned}\label{general-rmula-3}\frac{1}{2}\left(n\delta^{2}+(n+1)\gamma\right)\left(\lambda_{k+2}-\lambda_{k+1}\right)^{2}
\leq4(\lambda_{k+2}+\rho)\sum_{j=1}^{l}\|2\langle\nabla F_{j},\nabla u_{i}\rangle+u_{i}\Delta_{f}F_{j}\|_{\Omega}^{2}
.
\end{aligned}\end{equation}Let $\tau=c,l=n+1$. Then, by utilizing \eqref{factor-ineq} and \eqref{general-rmula-3}, we yield

\begin{equation}\begin{aligned}\left(\lambda_{k+2}-\lambda_{k+1}\right)^{2}\leq\frac{32\overline{\delta}^{2}}{\left(n\delta^{2}+(n+1)\gamma\right)}(\lambda_{k+2}+c)(\lambda_{1}+c).
\end{aligned}\end{equation}
Therefore, we yield

\begin{equation*}\begin{aligned}\lambda_{k+2}-\lambda_{k+1}&\leq\sqrt{\frac{32\overline{\delta}^{2}}{\left(n\delta^{2}+(n+1)\gamma\right)}}\sqrt{\lambda_{k+2}+c}\sqrt{\lambda_{1}+c}\\
&\leq(\lambda_{1}+c)\sqrt{\frac{32C_{0}(n)\overline{\delta}^{2}}{\left(n\delta^{2}+(n+1)\gamma\right)}}(k+1)^{\frac{1}{n}}
\\&=C_{n,\Omega}(k+1)^{\frac{1}{n}},\end{aligned}
\end{equation*}
where $$C_{n,\Omega}=(\lambda_{1}+c)\sqrt{\frac{32C_{0}(n)\overline{\delta}^{2}}{\left(n\delta^{2}+(n+1)\gamma\right)}}.$$This completes the
proof of this theorem.
$$\eqno\Box$$

\begin{ack} The author is supported by the National Nature Science Foundation of China (Grant No. 11401268).\end{ack}

\end{document}